\documentclass[11pt]{article}

\usepackage[a4paper,margin=1in]{geometry}
\usepackage{amsmath,amssymb,amsfonts,amsthm}
\usepackage{mathtools}
\usepackage{enumitem}
\usepackage{hyperref}
\usepackage[nameinlink,capitalize]{cleveref}
\usepackage{algorithm}
\usepackage{algpseudocode}
\usepackage{array}
\usepackage{multirow}
\usepackage{graphicx}
\usepackage{subcaption}
\usepackage{epstopdf}
\usepackage{cite}

\hypersetup{colorlinks=true,linkcolor=blue,citecolor=blue,urlcolor=blue}
\newcolumntype{C}[1]{>{\centering\arraybackslash}m{#1}}

\newtheorem{theorem}{Theorem}[section]

\newtheorem{proposition}[theorem]{Proposition}
\newtheorem{corollary}[theorem]{Corollary}
\newtheorem{assumption}[theorem]{Assumption}
\newtheorem{remark}[theorem]{Remark}

\newcommand{\R}{\mathbb{R}}
\newcommand{\ip}[2]{\left\langle #1,#2\right\rangle}
\newcommand{\norm}[1]{\left\|#1\right\|}
\newcommand{\dd}{\mathrm{d}}

\newcommand{\argmin}{\operatorname*{argmin}}
\newcommand{\prox}{\operatorname{prox}}

\newcommand{\sech}{\operatorname{sech}}

\newcommand{\cE}{\mathcal E}

\newcommand{\cH}{\mathcal H}

\newcommand{\bigO}{\mathcal{O}}

\title{A unified continuous-discrete framework for Nesterov acceleration:
transitions between convex and strongly convex regimes\thanks{This work was supported by the Talent Introduction Project of Xihua University (Grant No. Z241102), Sichuan Science and Technology Program (Grant No. 2025ZNSFSC0813) and the National Natural Science Foundation of China (Grant No. 12601606, 12471296).}}

\author{Xin He\thanks{School of Science, Xihua University, Chengdu, Sichuan, China. E-mail: hexinuser@163.com},\and  Ya-Ping Fang \thanks{Department of Mathematics, Sichuan University, Chengdu, Sichuan,  China. E-mail: ypfang@scu.edu.cn}}

\date{\today}

\begin{document}
\maketitle

\begin{abstract}
Classical Nesterov acceleration employs different choices of damping and
inertial parameters in the convex and strongly convex settings, both for
continuous-time dynamics and for discrete algorithms. When the strong
convexity parameter is small, directly using the strongly convex damping or
inertial coefficient may lead to slower early-stage convergence than the
corresponding convex choice, despite its favorable asymptotic exponential or
linear rate. We develop a unified continuous-discrete framework that
encompasses both classical regimes and provides systematic transitions between
them. The resulting coefficient families retain the accelerated convex
behavior at early stages while attaining the strongly convex asymptotic rate.
The continuous-time dynamics arise from a two-state coupling and are analyzed
within a unified Lyapunov framework that yields simultaneous
$\mathcal{O}(1/t^2)$ and exponential convergence estimates, thereby recovering
the classical convex and strongly convex rates. We further derive two classes
of accelerated forward-backward algorithms by discretizing the proposed
dynamics and establish convergence estimates covering the convex, strongly
convex, and intermediate regimes. The framework recovers the classical
Nesterov inertial coefficients and generates hyperbolic, exponential,
algebraic, and polynomial transition families. Numerical experiments
demonstrate the effectiveness of the proposed methods when the strong convexity
parameter is small.
\end{abstract}

\noindent\textbf{Keywords:}
Nesterov acceleration; continuous-time dynamics; accelerated
forward-backward algorithms; transition parameters;
convergence analysis

\section{Introduction}

\subsection{Problem setting and motivation}

Let $\cH$ be a real Hilbert space.  We consider
\begin{equation}\label{ques_main}
    \min_{x\in\cH}\Phi(x),
\end{equation}
where $\Phi$ is $\mu$-strongly convex for some $\mu\ge0$ and
$\operatorname*{argmin}\Phi\neq\varnothing$.  We fix
$x^*\in\operatorname*{argmin}\Phi$ and set $\Phi^*:=\Phi(x^*)$.  When
$\Phi$ is differentiable, $\mu$-strong convexity means that
\begin{equation*}
    \Phi(y)
    \ge
    \Phi(x)
    +
    \ip{\nabla\Phi(x)}{y-x}
    +
    \frac{\mu}{2}\norm{y-x}^2,
    \qquad x,y\in\cH.
\end{equation*}
The cases $\mu=0$ and $\mu>0$ correspond to convex and strongly convex
optimization, respectively.  Nesterov acceleration is one of the fundamental
techniques in first-order optimization.  For the smooth convex problem
\eqref{ques_main}, it achieves an $\bigO(k^{-2})$  convergence rate for the
objective residual \cite{Nesterov1983}, whereas, in the smooth strongly convex
case, a suitable fixed inertial parameter yields a linear convergence rate
\cite{Nesterov2004}. The acceleration techniques have subsequently been extended to composite optimization through accelerated forward-backward schemes
\cite{BeckFista,Tseng2008,Villa13,AujolDossal2015,KimF2018,AttouchC18},
to constrained formulations through primal-dual methods
\cite{XuSIOPT,HeNA,BotMPpd,LuoMC,HeAuto}, and to continuous-time models
\cite{SuJMLR2016,Wibisono16,AttouchMP18,ShiMp,AttouchJEMS,HeSICON}.

 The classical convex and strongly convex Nesterov methods employ different
damping and inertial coefficients. When the convex Nesterov parameters are
applied to $\mu$-strongly convex objectives with $\mu>0$, strong convexity can
improve the standard $\bigO(1/k^2)$ convex convergence rate. Faster polynomial rates
\cite{SuJMLR2016,ApidopoulosMpAL,AujolOptim} and eventual exponential or
linear convergence \cite{LiSYSiopt} have been established in continuous and
discrete settings. These improved rates, however, generally hold only beyond
a $\mu$-dependent time or iteration threshold and need not match the rates
obtained using the classical strongly convex parameters
\cite{Nesterov2004,LinML}. Conversely, when $\mu$ is small, using the
strongly convex parameters from the beginning may lead to slower initial
convergence than using the corresponding convex parameters, despite their
better asymptotic rates.

The central question is therefore how to choose the parameters so as to
retain favorable early-stage convergence without sacrificing the strongly
convex asymptotic rate when $\mu$ is small. To address this question, we
construct time-dependent damping and inertial coefficients for continuous
dynamics and accelerated forward-backward algorithms under a suitable
admissibility condition. The resulting methods retain the convergence
estimates of convex Nesterov acceleration at early stages and attain the
exponential or linear rates of strongly convex Nesterov acceleration at later
stages, while recovering the classical convex and strongly convex Nesterov
parameters as endpoint cases.

\subsection{Nesterov acceleration for convex and strongly convex objectives}

For an $L$-smooth objective $\Phi$, Nesterov's accelerated gradient (NAG)
method can be written as
\begin{equation}\label{al_NAG}
\left\{
\begin{aligned}
    y_k &= x_k+\beta_k(x_k-x_{k-1}),\\
    x_{k+1} &= y_k-s\nabla\Phi(y_k),
\end{aligned}
\right.
\end{equation}
where $s\le 1/L$. In the convex case $\mu=0$, the classical inertial
coefficient is
$
    \beta_k=\frac{t_k-1}{t_{k+1}},
$
where
$t_1=1$ and
$t_{k+1}=\frac{1+\sqrt{1+4t_k^2}}{2}$.
This choice yields
$
    \Phi(x_k)-\Phi^*=\bigO(k^{-2})
$
\cite{Nesterov1983}.
More generally, an explicit family of inertial coefficients is given by
\begin{equation}\label{eq_para_conv}
    \beta_k=\frac{k-1}{k+\gamma-1},
    \qquad \gamma\ge3,
\end{equation}
which yields the same $\bigO(k^{-2})$ rate for the objective residual
\cite{ChambolleJota,SuJMLR2016,AttouchMP18}. The continuous-time
counterpart of this family is the following dynamical system with vanishing
damping \cite{SuJMLR2016}:
\begin{equation}\label{dy_NAG_C}
    \ddot x(t)+\frac{\gamma}{t}\dot x(t)+\nabla\Phi(x(t))=0.
\end{equation}
For $\gamma\ge3$, its trajectories satisfy
$
    \Phi(x(t))-\Phi^*=\bigO(t^{-2}),
$
and the case $\gamma=3$ corresponds to the formal continuous-time limit of
the classical convex Nesterov method \cite{SuJMLR2016,Nesterov1983}. Further asymptotic properties of the continuous dynamics, including improved
decay rates and convergence of trajectories, have been studied in
\cite{AttouchSiopt16,AttouchMP18}, while recent work has established
convergence of the iterates for accelerated discrete algorithms
\cite{JangArxiv,BotArxiv}. More general time-dependent damping and stabilization terms have been considered in \cite{AttouchJEMS,ShiMp,BotMp,AttouchCabot2017}, while inertial algorithms and their continuous-time counterparts have been analyzed in \cite{Apidopoulos20,HeCOAP,AttouchC18,LuoChen2022}.

In the strongly convex case $\mu>0$, Nesterov's accelerated gradient method
uses a fixed momentum coefficient that depends on the curvature parameters.
In particular, when $s=1/L$, the classical strongly convex coefficient in
\eqref{al_NAG} is
\begin{equation}\label{eq_para_SC}
    \beta_k
    =
    \frac{1-\sqrt{\mu/L}}{1+\sqrt{\mu/L}},
\end{equation}
which yields
$
    \Phi(x_k)-\Phi^*
    =
    \bigO\left((1-\sqrt{\mu/L})^k\right)
$
\cite{Nesterov2004,LinML}. A standard continuous-time counterpart is
\begin{equation}\label{dy_NAG_SC}
    \ddot x(t)+2\sqrt{\mu}\dot x(t)+\nabla\Phi(x(t))=0,
\end{equation}
for which
$
    \Phi(x(t))-\Phi^*
    =
    \bigO\left(e^{-\sqrt{\mu}t}\right)
$
\cite{Siegel2019,LuoChen2022,WilsonJmlr}. Thus, convex and strongly convex
acceleration use different coefficients in both discrete and continuous
time. The convex case uses a time-dependent inertial coefficient and
vanishing damping, whereas the strongly convex case uses a fixed inertial
coefficient and constant damping.

For composite objectives, Nesterov acceleration leads to accelerated
forward-backward methods. Consider
\begin{equation}\label{ques_comp}
    \min_{x\in\cH}\Phi(x):=f(x)+g(x),
\end{equation}
where $f:\cH\to\R$ is $L$-smooth and $\mu$-strongly convex for some
$\mu\ge0$, and $g:\cH\to\R\cup\{+\infty\}$ is proper, lower
semicontinuous, and convex. Then $\Phi$ is $\mu$-strongly convex. The
gradient step in \eqref{al_NAG} is replaced by a forward-backward step, in
which $f$ is treated explicitly and $g$ is handled through its proximal
map. Two standard forms of Nesterov acceleration are used for problem
\eqref{ques_comp}: one-sequence inertial schemes, including the fast
iterative shrinkage-thresholding algorithm (FISTA) \cite{BeckFista} and its
variants \cite{AujolDossal2015,Villa13,KimF2018,AttouchSiopt16}, and
two-sequence schemes of Tseng's type \cite{Tseng2008}, which are based on
the coupling of two iterate sequences
\cite{Nesterov12,LinML,LuoChen2022}. With suitable parameter choices, both
forms attain an $\bigO(k^{-2})$ rate for the objective residual when
$\mu=0$ and a linear rate of the form
$
    \bigO\left((1-\sqrt{\mu/L})^k\right)
$
when $\mu>0$. In the smooth case $g\equiv0$, the two formulations can be reduced to
equivalent iterations after an appropriate identification of their variables, but generally
produce different iterations when $g\neq 0$ is nonsmooth \cite{LinML}.
Continuous-time models and numerical discretizations provide a complementary
approach to the design and Lyapunov analysis of accelerated
forward-backward schemes
\cite{HeCOAP,AttouchMP18,LuoChen2022,SuJMLR2016,ShiMp,AttouchC18}.

Existing results provide separate parameter choices for the convex and
strongly convex cases. The present work studies coefficient families that
connect these choices in both continuous and discrete time, with particular
attention to the regime in which $\mu$ is small.

 \subsection{Unified formulations and existing limitations}

The preceding discussion raises the central question of this paper: can the
classical convex and strongly convex Nesterov parameters be embedded into a
unified continuous-discrete framework by means of coefficient families that
connect the two regimes? When $\mu>0$ is small, such a transition should
retain the favorable initial behavior of the convex parameters while
recovering the faster strongly convex rate asymptotically. In continuous
time, this amounts to constructing a family of damping coefficients
$\delta_\mu(t)$ such that
\[
\begin{aligned}
    \delta_\mu(t)&\to\frac{\gamma}{t}
    &&\text{as }\mu\downarrow0,\qquad \gamma\ge3,\\
    \delta_\mu(t)&\to2\sqrt{\mu}
    &&\text{as }t\to\infty.
\end{aligned}
\]
The first limit ensures consistency with the convex accelerated damping law
for each fixed $t>0$ as the strong convexity parameter vanishes, whereas the
second ensures that, for each fixed $\mu>0$, the damping approaches the
classical strongly convex damping at large times. The discrete counterpart
is to construct inertial coefficients that reduce to the convex parameters
in \eqref{eq_para_conv} as $\mu\downarrow0$ and approach the fixed strongly
convex parameter in \eqref{eq_para_SC} as $k\to\infty$. The objective is to
obtain the convex $\bigO(k^{-2})$ behavior at early stages and the accelerated
strongly convex linear rate asymptotically within a unified Lyapunov
framework.

Several works have investigated general mechanisms underlying acceleration.
Variational formulations of accelerated dynamics were developed in
\cite{Wibisono16}, while Lyapunov interpretations were provided in
\cite{WilsonJmlr}. High-resolution differential equations and related
differential-equation and geometric descriptions of accelerated methods were
considered in
\cite{ShiMp,LuoChen2022,AujolOptim,HeCOAP,Muehlebach19}. These works provide
important insights into the mechanisms of acceleration, but they do not
construct coefficient families connecting the convex and strongly convex
parameters through different admissible transition coefficients.

The formulations in
\cite{LuoChen2022,ChenL21,KimYang2023,ChambolleActa}
are more closely related to the present work. In the scaled NAG formulation
of \cite{LuoChen2022}, the dynamics take the form
\begin{equation}\label{dy:LuoC}
    \ddot x(t)
    +
    \frac{1}{\lambda(t)}
    \left(\mu+\lambda(t)^2-\dot\lambda(t)\right)\dot x(t)
    +
    \nabla\Phi(x(t))
    =
    0,
\end{equation}
where the positive scaling function $\lambda(t)$ satisfies
\begin{equation}\label{eq_lamass}
    2\dot\lambda(t)\le \mu-\lambda(t)^2.
\end{equation}
This condition leads to a unified Lyapunov analysis, but it also restricts
the admissible damping coefficients. Indeed, setting
$\theta(t)=1/\lambda(t)$ transforms \eqref{eq_lamass} into
\[
    2\dot\theta(t)+\mu\theta(t)^2\ge1.
\]
When $\mu=0$, this implies $\dot\theta(t)\ge1/2$. Hence, for an
asymptotically linear scaling satisfying
$\theta(t)\sim ct$ and $\dot\theta(t)\to c$, one necessarily has
$c\ge1/2$. The induced damping
\[
    \delta_\mu(t)
    =
    \frac{1+\dot\theta(t)}{\theta(t)}+\mu\theta(t)
\]
therefore satisfies, when $\mu=0$,
\[
    \delta_0(t)
    \sim
    \frac{\gamma}{t},
    \qquad
    \gamma=\frac{1+c}{c}\le3.
\]
The scaled NAG dynamic in \eqref{dy:LuoC} therefore includes the critical
convex damping $3/t$ and damping laws with smaller leading coefficients, but
exclude the supercritical family $\gamma/t$ with $\gamma>3$.

At the discrete level, the algorithms derived from the scaled dynamics in
\cite{LuoChen2022,ChenL21} yield estimates of the form
\[
    \Phi(x_k)-\Phi^*
    =
    \bigO\left(
        \min\left\{
            \frac{1}{k^2},
            (1+\sqrt{\mu/L})^{-k}
        \right\}
    \right)
\]
in both smooth and composite settings. This estimate combines the convex
sublinear rate with a strongly convex linear rate. The latter is, however,
slower than the classical accelerated strongly convex rate. Indeed, for
$\sqrt{\mu/L}\in(0,1]$, one has
$
    \frac{1}{1+\sqrt{\mu/L}}>1-\sqrt{\mu/L},
$
and hence $(1+\sqrt{\mu/L})^{-k}$ decays more slowly than $(1-\sqrt{\mu/L})^k$.

The equality case of \eqref{eq_lamass} underlies the transition studied in
\cite{KimYang2023}. Under the singular asymptotic condition
$\lambda(t)\sim2/t$ as $t\downarrow0$, the equation
\[
    2\dot\lambda(t)=\mu-\lambda(t)^2
\]
has the explicit solution
\[
    \lambda(t)
    =
    \sqrt{\mu}\coth\left(\frac{\sqrt{\mu}}{2}t\right).
\]
The induced damping transitions from the critical convex value $3/t$ to the
strongly convex value $2\sqrt{\mu}$. The associated discrete method for
smooth objectives similarly connects the convex $\bigO(k^{-2})$ rate with
the accelerated linear rate $(1-\sqrt{\mu/L})^k$. This construction,
however, corresponds only to the critical choice $\gamma=3$ and to one
specific hyperbolic transition coefficient.

A closely related discrete transition already appears in 
\cite[Algorithm~5]{ChambolleActa}. Specializing that algorithm to the setting in which
the smooth term is $\mu$-strongly convex and the nonsmooth term is convex,
and taking $s\mu\in[0,1)$, yields the parameter rule
\begin{equation}\label{eq_canonical_cp}
\begin{aligned}
    \beta_k
    =
    \frac{
        (t_k-1)(1-s\mu t_{k+1})
    }{
        (1-s\mu)t_{k+1}
    },
\end{aligned}
\end{equation}
where
\[
    t_{k+1}
    =
    \frac{
        1-s\mu t_k^2+
        \sqrt{(1-s\mu t_k^2)^2+4t_k^2}
    }{2}.
\]
With the standard choice $s=1/L$, the resulting method satisfies
\[
    \Phi(x_k)-\Phi^*
    =
    \bigO\left(
        \min\left\{
            \frac{1}{k^2},
            \left(1-\sqrt{\frac{\mu}{L}}\right)^k
        \right\}
    \right).
\]
When $\mu=0$, \eqref{eq_canonical_cp} reduces to the classical convex
Nesterov and FISTA parameter rule
\cite{Nesterov1983,BeckFista}. Its momentum coefficient is asymptotically
equivalent, up to an index shift, to the critical Nesterov coefficient
$
    \frac{k-1}{k+2}.
$
For every fixed $s\mu\in(0,1)$, one has $t_k\to \frac{1}{\sqrt{s\mu}}$,
\[
    \beta_k\to
    \frac{1-\sqrt{s\mu}}{1+\sqrt{s\mu}}
    \qquad
    \text{as }k\to\infty.
\]
Thus, this parameter rule connects the critical convex momentum with the
classical fixed momentum for strongly convex acceleration. \cite[Algorithm~5]{ChambolleActa} therefore provides an important discrete benchmark and
already unifies the convex and strongly convex parameter regimes.
Nevertheless, it specifies only one particular equality-based parameter
rule. Its convex limit is tied to the critical Nesterov asymptotic and does
not cover the supercritical family associated with the inertial coefficient
in \eqref{eq_para_conv} for $\gamma>3$. Moreover, although this equality rule is sufficient to attain the standard accelerated convergence rates, this
does not imply that it is the unique admissible choice or that it provides
the best transient numerical behavior when $\mu$ is small. In particular,
the discrete derivation does not provide a continuous-time mechanism that
explains the full evolution of the coefficients or generates alternative
transitions with the same endpoints.

The restriction to the critical convex parameter is significant because the
supercritical damping $\gamma/t$ with $\gamma>3$ yields faster asymptotic
properties for several convex inertial dynamics
\cite{May17,AttouchCabot2017}. The corresponding discrete inertial
parameters also enjoy improved
objective decay
\cite{AttouchSiopt16,ChambolleJota}. It is therefore natural to ask whether,
starting from any accelerated convex damping $\gamma/t$ with $\gamma\ge3$,
one can construct different continuous and discrete transitions toward the
strongly convex damping $2\sqrt{\mu}$ and the corresponding fixed momentum
parameter. Such a construction would provide a unified extension of the
models in
\cite{SuJMLR2016,AttouchCabot2017,ChambolleJota,Siegel2019,KimF2018,
LuoChen2022,AttouchMP18},
while allowing the transient behavior to be adjusted without sacrificing
the accelerated strongly convex asymptotic rate.

\subsection{Contributions and organization}

This paper develops a two-state Nesterov construction that provides a unified
continuous-discrete framework for convex acceleration, strongly convex
acceleration, and transitions between the two regimes. At the continuous-time
level, eliminating the auxiliary state variable yields the second-order
dynamic
\begin{equation}\label{dy_main}
    \ddot x(t)
    +
    \delta_\mu(t)\dot x(t)
    +
    \nabla\Phi(x(t))
    =
    0,
    \qquad
    \delta_\mu(t)
    =
    \frac{1+\dot\theta(t)}{\theta(t)}
    +
    \mu\theta(t),
\end{equation}
where $t\ge t_0>0$. A second-order equation of the same algebraic form was
obtained in \cite{LuoChen2022} through a scaled NAG formulation and a
time-scaling technique. The present work differs, however, in both its
derivation and its admissibility condition. Specifically, our dynamics are
derived directly from a two-state system and analyzed under the following
assumption.

\begin{assumption}\label{ass_coeff}
Suppose that $\Phi$ is $\mu$-strongly convex for some $\mu\ge0$. Let
$\theta:[t_0,+\infty)\to(0,+\infty)$ be continuously differentiable and
nondecreasing, and assume that
\[
    0
    <
    2\dot\theta(t)+\mu\theta(t)^2
    \le
    1
    \qquad
  \forall t\ge t_0.
\]
\end{assumption}

By contrast, after the change of variables
$\theta(t)=1/\lambda(t)$, the condition imposed in
\cite{LuoChen2022} becomes
\[
    2\dot\theta(t)+\mu\theta(t)^2
    \ge
    1.
\]
Thus, although the two approaches lead to second-order equations of the same
algebraic form, they impose oppositely directed admissibility conditions and
generate different classes of damping coefficients. Their common boundary
is given by
\[
    2\dot\theta(t)+\mu\theta(t)^2=1,
\]
which is associated with the hyperbolic transition considered in
\cite{LuoChen2022,KimYang2023}.

Under Assumption~\ref{ass_coeff}, we establish a Lyapunov estimate associated
with the scale
\[
    A(t)
    =
    \theta(t)^2
    e^{
        \int_{t_0}^{t}\mu\theta(s)\,\dd s}
\]
and show that
\[
    \Phi(x(t))-\Phi^*
    \le
    \frac{C}{A(t)}
\]
for some constant $C>0$ depending on the initial data. The proposed
framework recovers the classical convex damping $\gamma/t$ for
$\gamma\ge3$ \cite{SuJMLR2016,AttouchCabot2017} and the strongly convex
damping $2\sqrt{\mu}$ \cite{Siegel2019,WilsonJmlr} as endpoint cases. It
further generates hyperbolic, exponential, algebraic, and polynomial
transition coefficients between these two regimes. For small $\mu$, the
resulting dynamics retain convex-type accelerated behavior at early stages
while recovering the strongly convex exponential rate asymptotically. The
admissible coefficient families and their convergence properties are studied
in detail in Section~\ref{sec:dyna}.

Through suitable discretizations, the same two-state construction yields
one-sequence and two-sequence accelerated forward-backward algorithms for
the composite optimization problem \eqref{ques_comp}. These two classes
respectively extend the one-sequence inertial schemes studied in
\cite{Nesterov2004,BeckFista,ChambolleJota} and the two-sequence schemes
studied in \cite{Nesterov12,Tseng2008,LinML}. Both classes are governed by
the same parameter sequence, which induces the inertial coefficient
\[
    \beta_k
    =
    \frac{
        (\theta_k-\sqrt{s})
        (1-\mu\sqrt{s}\theta_{k+1})
    }{
        (1-s\mu)\theta_{k+1}
    }.
\]
Under the corresponding discrete admissibility conditions, we introduce the
Lyapunov scale
\[
    A_k
    =
    \theta_k^2
    \prod_{i=1}^{k}
    \frac{1}{1-\sqrt{s}\mu\theta_i},
    \qquad
    s\le\frac{1}{L},
\]
and establish estimates of the form
\[
    \Phi(x_k)-\Phi^*
    \le
    \frac{C}{A_k}.
\]
The resulting coefficients recover, as special cases, the classical convex
and strongly convex Nesterov parameters and their forward-backward variants
\cite{Nesterov1983,Nesterov2004,BeckFista,ChambolleJota,AttouchC18,
LinML}. Moreover, under the identification
$\theta_k=\sqrt{s}\,t_k$, the proposed inertial coefficient recovers the
 transition rule \eqref{eq_canonical_cp} from
\cite[Algorithm~5]{ChambolleActa}. Beyond this equality rule, the discrete
framework generates hyperbolic, exponential, algebraic, and polynomial
transition families that parallel their continuous-time counterparts.
Consequently, the continuous dynamics and the discrete algorithms are
derived and analyzed through the same underlying two-state mechanism at the
levels of their endpoint parameters, transition coefficients, Lyapunov
structures, and convergence estimates.

The main contributions of this paper are summarized as follows.
\begin{enumerate}
    \item[$\bullet$]
    We derive the second-order dynamic \eqref{dy_main} directly from a
    two-state Nesterov system and establish a unified Lyapunov analysis for
    convex, strongly convex, and transition regimes \cite{SuJMLR2016,Siegel2019,AttouchCabot2017,KimYang2023,AttouchMP18} . Although
    \eqref{dy_main} has the same algebraic form as the scaled NAG dynamic in
    \cite{LuoChen2022}, our derivation is based on a two-state coupling and
    the oppositely directed admissibility condition in
    Assumption~\ref{ass_coeff}. This difference yields a broader class of
    admissible damping coefficients, including the critical convex damping
    $3/t$, the supercritical family $\gamma/t$ with $\gamma>3$, and
    hyperbolic, exponential, algebraic, and polynomial transitions toward
    the strongly convex damping $2\sqrt{\mu}$. The corresponding Lyapunov
    scale provides convergence estimates that recover the classical convex
    and strongly convex rates at the endpoint regimes.

    \item[$\bullet$]
    We derive one-sequence and two-sequence accelerated forward-backward
    algorithms by discretizing the same two-state mechanism and establish
    convergence estimates for convex, strongly convex, and transition
    regimes. The discrete framework recovers the classical accelerated
    parameter choices
    \cite{Nesterov1983,Nesterov2004,BeckFista,Tseng2008,LinML,
    ChambolleJota,AttouchC18}, includes the transition rule
    \eqref{eq_canonical_cp} from \cite[Algorithm~5]{ChambolleActa}, and
    generates hyperbolic, exponential, algebraic, and polynomial transition
    families paralleling their continuous-time counterparts. Thus, the
    continuous dynamics and discrete algorithms share the same underlying
    parameter mechanism, Lyapunov structure, and endpoint behavior.

  \item[$\bullet$]
    We conduct numerical experiments for both the continuous dynamics and
    the discrete algorithms. On the tested problems with a small strong
    convexity parameter, suitable choices from the proposed transition
    families exhibit more favorable transient behavior than the classical
    convex and strongly convex endpoint choices and the equality-based
    transition, while retaining the accelerated strongly convex asymptotic
    rate.
\end{enumerate}
The remainder of the paper is organized as follows.
Section~\ref{sec:dyna} develops the continuous dynamics, establishes the
Lyapunov estimates, and studies the classical and transition damping
coefficients. Section~\ref{sec_discrete} derives the accelerated
forward-backward algorithms and proves their convergence properties.
Section~\ref{sec_numerics} presents numerical experiments for both the
continuous dynamics and the discrete algorithms. The final section
concludes the paper.

\section{Unified continuous-time dynamics}\label{sec:dyna}

This section develops the continuous-time component of the unified framework.
We first derive the dynamic \eqref{dy_main} from a two-state Nesterov
coupling and clarify the roles of the coefficient functions. We then
establish a unified Lyapunov estimate under the proposed admissibility
condition. The resulting framework recovers the classical convex and
strongly convex dynamics as endpoint cases and, more importantly, generates
several new families of coefficients that transition between the two
regimes, including hyperbolic, exponential, algebraic, and polynomial
coefficients. For each coefficient family, we derive the corresponding
convergence rate for the objective residual.

\subsection{Derivation from a two-state Nesterov coupling}

We begin with a two-state dynamical system involving two coefficient
functions. This formulation separates the inertial time scale, the feedback
term, and the gradient  coefficient. Consider
\begin{equation}\label{dy_coup}
\begin{aligned}
    \dot x(t)
    &=
    \frac{1}{\theta(t)}
    \bigl(z(t)-x(t)\bigr),\\
    \dot z(t)
    &=
    -\mu\theta(t)\bigl(z(t)-x(t)\bigr)
    -\beta(t)\nabla\Phi(x(t)).
\end{aligned}
\end{equation}
Here, $x(t)$ is the primal state and $z(t)$ is the auxiliary accelerated
state. The coefficient $\theta(t)>0$ determines the inertial time scale in
the first equation, $\mu\theta(t)$ controls the feedback acting on
$z(t)-x(t)$, and $\beta(t)>0$ scales the gradient.

The first equation in \eqref{dy_coup} gives
\[
    z(t)=x(t)+\theta(t)\dot x(t).
\]
Differentiating this identity yields
\[
    \dot z(t)
    =
    \bigl(1+\dot\theta(t)\bigr)\dot x(t)
    +
    \theta(t)\ddot x(t).
\]
On the other hand, since
$z(t)-x(t)=\theta(t)\dot x(t)$, the second equation in
\eqref{dy_coup} gives
\[
    \dot z(t)
    =
    -\mu\theta(t)^2\dot x(t)
    -
    \beta(t)\nabla\Phi(x(t)).
\]
Combining these two expressions, we obtain
\[
    \theta(t)\ddot x(t)
    +
    \bigl(1+\dot\theta(t)+\mu\theta(t)^2\bigr)\dot x(t)
    +
    \beta(t)\nabla\Phi(x(t))
    =
    0.
\]
Dividing by $\theta(t)$ leads to
\begin{equation}\label{eq_gen_dy}
    \ddot x(t)
    +
    \left(
        \frac{1+\dot\theta(t)}{\theta(t)}
        +
        \mu\theta(t)
    \right)\dot x(t)
    +
    \frac{\beta(t)}{\theta(t)}
    \nabla\Phi(x(t))
    =
    0.
\end{equation}
Thus, the coefficient of the gradient force is
$\beta(t)/\theta(t)$. To obtain a unit coefficient in front of
$\nabla\Phi(x(t))$, we impose the normalization
$
    \beta(t)=\theta(t).
$
Equation \eqref{eq_gen_dy} then reduces to the unified dynamic
\eqref{dy_main}.

This derivation shows that the damping coefficient in \eqref{dy_main} is
generated by the two-state coupling rather than prescribed independently.
In particular, the term
$
    \frac{1+\dot\theta(t)}{\theta(t)}
$
arises from the inertial relation
$z(t)=x(t)+\theta(t)\dot x(t)$, whereas the term
$\mu\theta(t)$ is induced by the feedback in the second equation of
\eqref{dy_coup}. The normalization $\beta(t)=\theta(t)$ fixes the gradient
scale without otherwise restricting the admissible choices of
$\theta(t)$. This two-state representation provides the basis for the
coefficient conditions, convergence analysis, and transition coefficients
developed below.

\subsection{Convergence analysis}

Throughout this subsection, $\Phi:\cH\to\R$ is continuously Fr\'echet
differentiable and $\mu$-strongly convex for some $\mu\ge0$, with
$\operatorname*{argmin}\Phi\neq\varnothing$. We work under
Assumption~\ref{ass_coeff}.

The upper bound in Assumption~\ref{ass_coeff} is the compatibility condition
required in the Lyapunov estimate below, whereas the strict lower bound
ensures that the time-dependent weight introduced below is strictly
increasing.  Under Assumption~\ref{ass_coeff}, the damping coefficient in
\eqref{dy_main} is strictly positive. Indeed,
\begin{equation}\label{eq_deltaP}
	\delta_\mu(t) = \frac{1+\dot\theta(t)}{\theta(t)}+\mu\theta(t) = \frac{1}{\theta(t)} + \frac{2\dot\theta(t)+\mu \theta(t)^2}{2\theta(t)} + \frac{\mu\theta(t)}{2}> \frac{1}{\theta(t)}+\frac{\mu\theta(t)}{2}>0.
\end{equation}
Thus, \eqref{dy_main} is a dissipative second-order system with a positive
time-dependent damping coefficient. We first establish global well-posedness.

\begin{proposition}\label{prop:wellposed}
Suppose that Assumption~\ref{ass_coeff} holds and that $\nabla\Phi$ is
locally Lipschitz continuous on $\cH$. Then, for every initial condition
$(x_0,v_0)\in\cH\times\cH$, the dynamic \eqref{dy_main} admits a unique
global solution
$
    x\in C^2([t_0,+\infty),\cH)
$
satisfying $x(t_0)=x_0$ and $\dot x(t_0)=v_0.$
\end{proposition}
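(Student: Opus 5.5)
We rewrite \eqref{dy_main} as a first-order system on the Hilbert space $\cH\times\cH$ in the state $U(t)=(x(t),v(t))$ with $v=\dot x$:
\[
    \dot U(t)=F(t,U(t)),\qquad
    F(t,x,v):=\bigl(v,\,-\delta_\mu(t)v-\nabla\Phi(x)\bigr).
\]
Under Assumption~\ref{ass_coeff}, $\theta$ is $C^1$ and strictly positive on $[t_0,+\infty)$, so $\delta_\mu$ is continuous on $[t_0,+\infty)$ and hence bounded on every compact interval $[t_0,T]$. Since $\nabla\Phi$ is locally Lipschitz, $F$ is continuous in $t$ and locally Lipschitz in $U$, uniformly for $t$ in compact intervals. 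The Cauchy--Lipschitz (Picard--Lindelöf) theorem in Banach spaces then gives a unique maximal solution $U\in C^1([t_0,T_{\max}),\cH\times\cH)$ with $U(t_0)=(x_0,v_0)$. From $\dot x=v\in C^1$ we get $x\in C^2$, and uniqueness for \eqref{dy_main} is equivalent to uniqueness for the system.

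The remaining, and main, step is to rule out $T_{\max}<+\infty$. Local Lipschitz continuity alone does not give linear growth, so we need an a priori bound instead. We will use the energy
\[
    E(t)=\Phi(x(t))-\Phi^*+\tfrac12\norm{\dot x(t)}^2.
\]
Differentiating along the solution gives
\[
    \dot E(t)=\ip{\nabla\Phi(x)+\ddot x}{\dot x}=-\delta_\mu(t)\norm{\dot x(t)}^2\le0,
\]
where the sign uses the positivity \eqref{eq_deltaP}. Hence $E(t)\le E(t_0)$. This yields the uniform bound $\norm{\dot x(t)}\le\sqrt{2E(t_0)}$ on $[t_0,T_{\max})$, and therefore $\norm{x(t)-x_0}\le\sqrt{2E(t_0)}\,(T_{\max}-t_0)$. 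So $U$ stays in a bounded set $B$ if $T_{\max}<\infty$.

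We then argue by contradiction, assuming $T_{\max}<\infty$. The difficulty in infinite dimensions is that a locally Lipschitz map need not be Lipschitz or bounded on bounded sets, so we avoid relying on the bounded set $B$ directly. Instead we use that $\dot x$ is bounded, which makes $x$ Lipschitz and therefore Cauchy as $t\uparrow T_{\max}$; hence $x(t)\to\bar x$. The trajectory $\{x(t)\}\cup\{\bar x\}$ is then compact. A locally Lipschitz map is Lipschitz, and hence bounded, on a neighborhood of a compact set, so $\nabla\Phi(x(t))$ stays bounded. Together with the boundedness of $\delta_\mu$ on $[t_0,T_{\max}]$, this bounds $\ddot x$, so $\dot x(t)$ is also Cauchy and converges to some $\bar v$. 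The solution can then be restarted at $(T_{\max},\bar x,\bar v)$ by the local existence theorem, contradicting maximality. Therefore $T_{\max}=+\infty$, and the solution is global, unique, and of class $C^2$.
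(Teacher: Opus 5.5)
Your proof is correct and follows essentially the same route as the paper. Both arguments rewrite the equation as a first-order system and apply Cauchy--Lipschitz, bound $\dot x$ via the dissipated energy $\frac12\|\dot x\|^2+\Phi(x)-\Phi^*$, and then, assuming $T<+\infty$, show that $x(t)$ and $\dot x(t)$ converge as $t\uparrow T$, so the solution extends. Your compactness remark for bounding $\nabla\Phi(x(t))$ is just a more explicit version of the paper's appeal to continuity of $\nabla\Phi$ along the convergent trajectory.
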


\begin{proof}
The dynamic \eqref{dy_main} can be written as the first-order system
\[
    \frac{\dd}{\dd t}
    \begin{pmatrix}
        x(t)\\
        \dot x(t)
    \end{pmatrix}
    =
    \begin{pmatrix}
        \dot x(t)\\
        -\delta_\mu(t)\dot x(t)-\nabla\Phi(x(t))
    \end{pmatrix}.
\]
Since $\delta_\mu$ is continuous and $\nabla\Phi$ is locally Lipschitz
continuous, the Cauchy-Lipschitz theorem yields, for every initial
condition, a unique maximal solution
$
    x\in C^2([t_0,T),\cH)
$
for some $T\in(t_0,+\infty]$.

Consider the mechanical energy
\[
    \mathcal W(t)
    :=
    \frac{1}{2}\norm{\dot x(t)}^2
    +
    \Phi(x(t))-\Phi^*.
\]
Using \eqref{dy_main} and \eqref{eq_deltaP}, we obtain
\[
\begin{aligned}
    \dot{\mathcal W}(t)
    &=
    \ip{\dot x(t)}{\ddot x(t)}
    +
    \ip{\nabla\Phi(x(t))}{\dot x(t)}\\
    &=
    -\delta_\mu(t)\norm{\dot x(t)}^2
    \le
    0.
\end{aligned}
\]
Since $\Phi(x(t))-\Phi^*\ge0$, it follows that
\[
    \sup_{t\in[t_0,T)}
    \norm{\dot x(t)}
    \le \sup_{t\in[t_0,T)}\sqrt{2\mathcal W(t)}\le \sqrt{2\mathcal W(t_0)}<+\infty.
\]

Suppose, by contradiction, that $T<+\infty$. The boundedness of $\dot x$
implies that $x$ is Lipschitz continuous on $[t_0,T)$, and hence there
exists $x_T\in\cH$ such that
\[
    x(t)\to x_T
    \qquad
    \text{as }t\uparrow T.
\]
Since $\delta_\mu$ is continuous, it is bounded on $[t_0,T]$. Moreover,
the continuity of $\nabla\Phi$ and the convergence $x(t)\to x_T$ imply
\[
    \sup_{t\in[t_0,T)}
    \norm{\nabla\Phi(x(t))}
    <+\infty.
\]
It then follows from \eqref{dy_main} that
\[
    \sup_{t\in[t_0,T)}
    \norm{\ddot x(t)}
    <+\infty.
\]
Consequently, $\dot x$ is Lipschitz continuous on $[t_0,T)$, and there
exists $v_T\in\cH$ such that
\[
    \dot x(t)\to v_T
    \qquad
    \text{as }t\uparrow T.
\]
Applying the local existence theorem at the state $(T,x_T,v_T)$ extends the
solution beyond $T$, contradicting the maximality of $[t_0,T)$. Therefore,
$T=+\infty$, which proves the proposition.
\end{proof}

We now introduce the  time-dependent weight associated with $\theta(t)$:
\begin{equation}\label{eq_Acont}
    A(t)
    :=
    \theta(t)^2
    \ e^{
        \int_{t_0}^{t}\mu\theta(s)\,\dd s}.
\end{equation}
Its derivative is
\begin{equation}\label{eq_Adot}
    \dot A(t)
    =
    \left(
        2\frac{\dot\theta(t)}{\theta(t)}
        +
        \mu\theta(t)
    \right)A(t).
\end{equation}
It follows from Assumption~\ref{ass_coeff} that $A(t)$ is strictly increasing.
The particular form of $A(t)$ is chosen so that the mixed terms cancel when
the Lyapunov function is differentiated.

Let $x(t)$ be a global solution of dynamic \eqref{dy_main}. For
$x^*\in\operatorname*{argmin}\Phi$, define
\begin{equation}\label{eq_Econt}
    \mathcal E(t)
    :=
    A(t)\bigl(\Phi(x(t))-\Phi^*\bigr)
    +
    \frac{A(t)}{2\theta(t)^2}
    \norm{
        x(t)-x^*+\theta(t)\dot x(t)
    }^2.
\end{equation}
The following theorem provides a general convergence estimate. The rates for
the classical endpoint dynamics and the transition dynamics will be obtained
by evaluating $A(t)$ for the corresponding choices of $\theta(t)$.

\begin{theorem}\label{thm_cont_main}
Let $x\in C^2([t_0,+\infty),\cH)$ be a global solution of
\eqref{dy_main}, let
$x^*\in\operatorname*{argmin}\Phi$, and suppose that
Assumption~\ref{ass_coeff} holds. Then, for every $t\ge t_0$,
\[
    \Phi(x(t))-\Phi^*
    \le
    \frac{\mathcal E(t_0)}{A(t)}.
\]
\end{theorem}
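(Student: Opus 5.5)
Our plan is to show that $\mathcal E$ is nonincreasing on $[t_0,+\infty)$. The conclusion then follows from $A(t)(\Phi(x(t))-\Phi^*)\le\mathcal E(t)\le\mathcal E(t_0)$, since the quadratic term in \eqref{eq_Econt} is nonnegative.

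To organize the computation, set $z(t):=x(t)+\theta(t)\dot x(t)$, as in the two-state system \eqref{dy_coup}, and write
\[
\mathcal E(t)=A(t)\bigl(\Phi(x)-\Phi^*\bigr)+\frac{A(t)}{2\theta(t)^2}\norm{z-x^*}^2 .
\]
The derivation preceding \eqref{eq_gen_dy} (with $\beta=\theta$) gives
\[
\dot z=-\mu\theta^2\dot x-\theta\nabla\Phi(x)=-\mu\theta(z-x)-\theta\nabla\Phi(x).
\]
By \eqref{eq_Adot}, $\frac{\dd}{\dd t}\bigl(A/\theta^2\bigr)=\mu\theta\,A/\theta^2$. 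Differentiating therefore yields
\[
\dot{\mathcal E}=\Bigl(\tfrac{2\dot\theta}{\theta}+\mu\theta\Bigr)A(\Phi(x)-\Phi^*)+A\ip{\nabla\Phi(x)}{\dot x}+\frac{A}{\theta^2}\Bigl[\tfrac{\mu\theta}{2}\norm{z-x^*}^2-\mu\theta\ip{z-x}{z-x^*}-\theta\ip{\nabla\Phi(x)}{z-x^*}\Bigr].
\]

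Next we simplify the gradient terms. Since $z-x^*=(x-x^*)+\theta\dot x$, we have $-\frac{A}{\theta}\ip{\nabla\Phi(x)}{z-x^*}=-\frac{A}{\theta}\ip{\nabla\Phi(x)}{x-x^*}-A\ip{\nabla\Phi(x)}{\dot x}$. The $\dot x$ term cancels exactly, which is the cancellation of mixed terms the paper alludes to. Strong convexity gives $\ip{\nabla\Phi(x)}{x-x^*}\ge\Phi(x)-\Phi^*+\frac{\mu}{2}\norm{x-x^*}^2$.

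For the remaining $\mu$ terms, use the identity $\frac12\norm{a}^2-\ip{a-b}{a}=-\frac12\norm{a-b}^2+\frac12\norm{b}^2$ with $a=z-x^*$ and $b=x-x^*$. It gives
\[
\frac{\mu}{2}\norm{z-x^*}^2-\mu\ip{z-x}{z-x^*}=\frac{\mu}{2}\norm{x-x^*}^2-\frac{\mu}{2}\norm{z-x}^2 .
\]
The $\frac{\mu}{2}\norm{x-x^*}^2$ contribution is absorbed by the strong-convexity term. Collecting everything, we expect
\[
\dot{\mathcal E}\le\frac{A}{\theta}\bigl(2\dot\theta+\mu\theta^2-1\bigr)\bigl(\Phi(x)-\Phi^*\bigr)-\frac{\mu A}{2\theta}\norm{z-x}^2 .
\]
This is $\le0$ by the upper bound $2\dot\theta+\mu\theta^2\le1$ in Assumption~\ref{ass_coeff}, since $\Phi(x)-\Phi^*\ge0$ and $\mu\ge0$. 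Integrating from $t_0$ to $t$ then completes the proof.

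The main obstacle is the bookkeeping in the $\mu$ terms: one must check that the factor $\mu\theta\cdot A/\theta^2$ coming from differentiating the weight combines with the feedback term $-\mu\theta(z-x)$ to give exactly $\frac{\mu}{2}\norm{x-x^*}^2$ minus a nonnegative square, with the coefficient matching the one produced by strong convexity. If a coefficient mismatch appears, the fallback is to keep $-\frac{\mu A}{2\theta}\norm{z-x}^2$ as a slack term and re-verify the identity with $a,b$ chosen as above.
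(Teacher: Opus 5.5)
Your proposal is correct and follows the paper's proof essentially step for step: the same energy, the identity $\frac{\dd}{\dd t}(A/\theta^2)=\mu\theta A/\theta^2$, the cancellation of $A\ip{\nabla\Phi(x)}{\dot x}$, strong convexity, and the upper bound in Assumption~\ref{ass_coeff}. Your slack term $-\frac{\mu A}{2\theta}\norm{z-x}^2$ is exactly the paper's $-\frac{\mu A\theta}{2}\norm{\dot x}^2$ because $z-x=\theta\dot x$, and the $\frac{\mu A}{2\theta}\norm{x-x^*}^2$ terms cancel exactly, so no fallback is needed; the only differences are that you handle the $\mu$-terms with the polarization identity in $z$ rather than expanding $\norm{v}^2$ as the paper does, and that the formula for $\dot z$ is best obtained directly by differentiating $z=x+\theta\dot x$ along \eqref{dy_main}, as the paper does.
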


\begin{proof}
Set
\[
    v(t)
    :=
    x(t)-x^*+\theta(t)\dot x(t).
\]
Using \eqref{dy_main}, we obtain
\[
\begin{aligned}
    \dot v(t)
    &=
    \bigl(1+\dot\theta(t)\bigr)\dot x(t)
    +
    \theta(t)\ddot x(t)\\
    &=
    \bigl(
        1+\dot\theta(t)-\theta(t)\delta_\mu(t)
    \bigr)\dot x(t)
    -
    \theta(t)\nabla\Phi(x(t))\\
    &=
    -\mu\theta(t)^2\dot x(t)
    -
    \theta(t)\nabla\Phi(x(t)).
\end{aligned}
\]
Differentiating \eqref{eq_Econt} gives
\begin{equation}\label{eq_dotE1}
\begin{split}
    \dot\cE(t)
    ={}&
    \dot A(t)(\Phi(x(t))-\Phi^*)
    +A(t)\ip{\nabla\Phi(x(t))}{\dot x(t)}\\
    &+
    \left(
        \frac{\dot A(t)}{2\theta(t)^2}
        -\frac{A(t)\dot\theta(t)}{\theta(t)^3}
    \right)\norm{v(t)}^2
    +\frac{A(t)}{\theta(t)^2}\ip{v(t)}{\dot v(t)}\\
    ={}&
    \dot A(t)(\Phi(x(t))-\Phi^*)
    +\frac{\mu A(t)}{2\theta(t)}\norm{v(t)}^2
    -\mu A(t)\ip{x(t)-x^*}{\dot x(t)}\\
    &-\mu A(t)\theta(t)\norm{\dot x(t)}^2
    -\frac{A(t)}{\theta(t)}
    \ip{x(t)-x^*}{\nabla\Phi(x(t))},
\end{split}	
\end{equation}
where the second equality follows from \eqref{eq_Adot}, which gives
\[
    \frac{\dot A(t)}{2\theta(t)^2}
    -
    \frac{A(t)\dot\theta(t)}{\theta(t)^3}
    =
    \frac{\mu A(t)}{2\theta(t)}.
\]
Since
\[
    \norm{v(t)}^2
    =
    \norm{x(t)-x^*}^2
    +2\theta(t)\ip{x(t)-x^*}{\dot x(t)}
    +\theta(t)^2\norm{\dot x(t)}^2,
\]
equation \eqref{eq_dotE1} reduces to
\begin{equation}\label{eq_Edot}
\begin{aligned}
    \dot{\mathcal E}(t)
    ={}&
    \dot A(t)
    \bigl(\Phi(x(t))-\Phi^*\bigr)
    +
    \frac{\mu A(t)}{2\theta(t)}
    \norm{x(t)-x^*}^2\\
    &-
    \frac{\mu A(t)\theta(t)}{2}
    \norm{\dot x(t)}^2
    -
    \frac{A(t)}{\theta(t)}
    \ip{x(t)-x^*}{\nabla\Phi(x(t))}.
\end{aligned}
\end{equation}
By the $\mu$-strong convexity of $\Phi$,
\[
    \ip{x(t)-x^*}{\nabla\Phi(x(t))}
    \ge
    \Phi(x(t))-\Phi^*
    +
    \frac{\mu}{2}
    \norm{x(t)-x^*}^2.
\]
Combining this inequality with \eqref{eq_Adot} and \eqref{eq_Edot}, we
obtain
\[
    \dot{\mathcal E}(t)
    \le
    -\frac{A(t)}{\theta(t)}
    \bigl(
        1-2\dot\theta(t)-\mu\theta(t)^2
    \bigr)
    \bigl(\Phi(x(t))-\Phi^*\bigr)-
    \frac{\mu A(t)\theta(t)}{2}
    \norm{\dot x(t)}^2
    \le
    0,
\]
where the last inequality follows from
Assumption~\ref{ass_coeff}. Therefore, $\mathcal E$ is nonnegative and
nonincreasing. Hence,
\[
    A(t)\bigl(\Phi(x(t))-\Phi^*\bigr)
    \le
    \mathcal E(t)
    \le
    \mathcal E(t_0),
\]
which proves the result.
\end{proof}

  \medskip
\noindent\textbf{Relation to the Luo-Chen  \cite{LuoChen2022} and Kim-Yang \cite{KimYang2023} dynamics.} As discussed in the introduction, the dynamic \eqref{dy_main} has the same
algebraic form as the scaled NAG equation studied in
\cite{LuoChen2022}, but the two formulations impose oppositely directed
conditions on their coefficient functions. We record here the precise
transformation and analyze their common equality branch.

Set
\[
    \lambda(t):=\frac{1}{\theta(t)}.
\]
Then
\[
   \delta_{\mu}(t)= \frac{1+\dot\theta(t)}{\theta(t)}
    +
    \mu\theta(t)
    =
    \frac{1}{\lambda(t)}
    \left(
        \mu+\lambda(t)^2-\dot\lambda(t)
    \right),
\]
and \eqref{dy_main} corresponds algebraically to the scaled NAG equation
\eqref{dy:LuoC}; see \cite[Eq.~(71)]{LuoChen2022}. The coefficient
condition imposed in that work is
\begin{equation}\label{eq:luoCass}
    2\dot\lambda(t)
    \le
    \mu-\lambda(t)^2.
\end{equation}
Under the identification $\lambda(t)=1/\theta(t)$,
condition \eqref{eq:luoCass} is equivalent to
\[
    2\dot\theta(t)+\mu\theta(t)^2
    \ge
    1,
\]
whereas Assumption~\ref{ass_coeff} requires
\[
    0
    <
    2\dot\theta(t)+\mu\theta(t)^2
    \le
    1.
\]
Thus, the two admissibility conditions meet precisely on the equality branch
\begin{equation}\label{eq_equ_ass}
    2\dot\theta(t)+\mu\theta(t)^2=1.
\end{equation}

The difference between the two conditions is already visible in the convex
case. For the power-law choice
$
    \lambda(t)=\frac{c}{t}$ with $c>0$,
condition \eqref{eq:luoCass} with $\mu=0$ requires $c\le2$. The
corresponding damping coefficient is
\[
    \delta_0(t)
    =
    \lambda(t)
    -
    \frac{\dot\lambda(t)}{\lambda(t)}
    =
    \frac{c+1}{t}.
\]
Hence, the scaled NAG condition covers the range
\[
    \delta_0(t)=\frac{\gamma}{t},
    \qquad
    1<\gamma\le3.
\]
By contrast, Assumption~\ref{ass_coeff}, expressed in terms of
$\theta(t)=t/c$, requires $c\ge2$ and therefore covers
\[
    \delta_0(t)=\frac{\gamma}{t},
    \qquad
    \gamma\ge3.
\]
The two ranges meet at $c=2$, which corresponds to the critical damping
$\gamma=3$. In particular, the condition considered here includes the
supercritical range $\gamma>3$, for which the convex dynamic
\eqref{dy_NAG_C} satisfies the improved estimate
$
    \Phi(x(t))-\Phi^*
    =
    o(t^{-2})
$
under the standard assumptions in \cite{AttouchSiopt16}.

On the equality branch \eqref{eq_equ_ass}, the transformed coefficient
$\lambda=1/\theta$ satisfies
\[
    2\dot\lambda(t)=\mu-\lambda(t)^2.
\]
The singular branch  corresponding to $\gamma_0=0$  in \cite{LuoChen2022,KimYang2023} is
\[
    \lambda(t)
    =
    \sqrt{\mu}
    \frac{e^{\sqrt{\mu}t}+1}{e^{\sqrt{\mu}t}-1}.
\]
Consequently,
\[
    \theta(t)
    =
    \frac{1}{\lambda(t)}
    =
    \frac{1}{\sqrt{\mu}}
    \frac{e^{\sqrt{\mu}t}-1}{e^{\sqrt{\mu}t}+1}
    =
    \frac{1}{\sqrt{\mu}}
    \tanh\left(\frac{\sqrt{\mu}}{2}t\right).
\]
The corresponding damping coefficient is
\[
     \delta_\mu(t)
    =
    \frac{\mu+3\lambda(t)^2}{2\lambda(t)}=
    2\sqrt{\mu}
    \frac{
        e^{2\sqrt{\mu}t}+e^{\sqrt{\mu}t}+1
    }{
        e^{2\sqrt{\mu}t}-1
    }.
\]
  For every fixed $t>0$,
\[
    \delta_\mu(t)
    \to
    \frac{3}{t}
    \qquad
    \text{as }\mu\downarrow0,
\]
whereas, for every fixed $\mu>0$,
\[
    \delta_\mu(t)
    \to
    2\sqrt{\mu}
    \qquad
    \text{as }t\to+\infty.
\]
More precisely,
\[
     \delta_\mu(t)-2\sqrt{\mu}
    =
    2\sqrt{\mu}
    \frac{e^{\sqrt{\mu}t}+2}{e^{2\sqrt{\mu}t}-1}
    =
    \mathcal O\left(e^{-\sqrt{\mu}t}\right).
\]
Therefore, the equality branch  \eqref{eq_equ_ass}  generates the damping transition
\[
    \frac{3}{t}
    \longrightarrow
    2\sqrt{\mu}.
\]
This branch also corresponds to the Kim-Yang unified
dynamic in
\cite[Eq.~(8)]{KimYang2023}.  For this choice of $\theta(t)$, the function $A(t)$ defined in
\eqref{eq_Acont} can be written as
\[    A(t)= 
    \frac{e^{\sqrt{\mu}t_0}}{
        \mu\left(e^{\sqrt{\mu}t_0}+1\right)^2
    }
    \left(
        e^{\frac{\sqrt{\mu}t}{2}}
        -
        e^{-\frac{\sqrt{\mu}t}{2}}
    \right)^2
    =
    \frac{e^{\sqrt{\mu}t_0}}{
        \mu\left(e^{\sqrt{\mu}t_0}+1\right)^2
    }
    e^{\sqrt{\mu}t}
    \left(1-e^{-\sqrt{\mu}t}\right)^2.
\]
Thus $A(t)$ is increasing.  Using the elementary inequality
\[
    e^{r/2}-e^{-r/2}\ge r,
    \qquad r\ge0,
\]
we obtain
\[
    A(t)
    \ge
    \frac{e^{\sqrt{\mu}t_0}}{
        \left(e^{\sqrt{\mu}t_0}+1\right)^2
    }
    t^2.
\]
Moreover, since $t\ge t_0$,
$
    1-e^{-\sqrt{\mu}t}
    \ge
    1-e^{-\sqrt{\mu}t_0},
$
and hence
\[
    A(t)
    \ge
    \frac{
        e^{\sqrt{\mu}t_0}
        \left(
            1-e^{-\sqrt{\mu}t_0}
        \right)^2
    }{
        \mu
        \left(
            e^{\sqrt{\mu}t_0}+1
        \right)^2
    }
    e^{\sqrt{\mu}t}.
\]
Consequently, Theorem~\ref{thm_cont_main} yields
\[
\begin{aligned}
    \Phi(x(t))-\Phi^*
    \le
    \mathcal E(t_0)
    \min\left\{
        \frac{
            \left(
                e^{\sqrt{\mu}t_0}+1
            \right)^2
        }{
            e^{\sqrt{\mu}t_0}t^2
        },
        \frac{
            \mu
            \left(
                e^{\sqrt{\mu}t_0}+1
            \right)^2
        }{
            e^{\sqrt{\mu}t_0}
            \left(
                1-e^{-\sqrt{\mu}t_0}
            \right)^2
        }
        e^{-\sqrt{\mu}t}
    \right\}.
\end{aligned}
\]
Therefore,
\[
    \Phi(x(t))-\Phi^*
    =
    \bigO\left(
        \min\left\{
            \frac{1}{t^2},
            e^{-\sqrt{\mu}t}
        \right\}
    \right).
\]

\begin{remark}
On the equality branch \eqref{eq_equ_ass}, the scaled NAG estimate of
Luo-Chen \cite[Section~3.2]{LuoChen2022}, the Kim-Yang estimate
\cite[Theorem~4.1]{KimYang2023}, and Theorem~\ref{thm_cont_main} yield the
same convergence rate. The polynomial and exponential estimates for the
objective residual follow from two different lower bounds on $A(t)$. In
particular, the bound
$
    \Phi(x(t))-\Phi^*
    =
    \bigO(t^{-2})
$
does not rely on the exponential decay induced by strong convexity. As $\mu\downarrow0$, one has
$e^{-\sqrt{\mu}t}\to1$ for every fixed $t>0$, so the exponential estimate
loses its decay. By contrast, the prefactor in the $\bigO(1/t^2)$ estimate
remains bounded, and
$
    A(t)\to\frac{t^2}{4}.
$
At the same time,
\[
    \delta_\mu(t)
    \to
    \frac{3}{t}
    \qquad
    \text{as }\mu\downarrow0
\]
for every fixed $t>0$. Hence, the equality-based transition dynamics reduce
to the convex NAG flow \eqref{dy_NAG_C} with $\gamma=3$ in the limit $\mu\downarrow0$, while the
$\bigO(t^{-2})$ estimate for the objective residual remains valid.
\end{remark}

Assumption~\ref{ass_coeff} is not restricted to the equality branch. In the
convex case $\mu=0$, it includes both the critical and supercritical damping
laws
\[
    \delta_0(t)=\frac{\gamma}{t},
    \qquad
    \gamma\ge3.
\]
For $\mu>0$, it also permits additional coefficient families that transition
from the convex damping $\gamma/t$ to the strongly convex damping
$2\sqrt{\mu}$. These families are not covered by
assumption \eqref{eq:luoCass} in \cite{LuoChen2022} and allow the duration and shape of the transition between
the two endpoint regimes to be adjusted. This
flexibility is particularly relevant when $\mu$ is small, since the damping
can retain convex-type behavior over a longer initial period before
approaching the strongly convex value $2\sqrt{\mu}$. The corresponding
hyperbolic, exponential, algebraic, and polynomial damping coefficients are
developed in the following subsections.

 \subsection{Classical  and transition damping coefficients}\label{sec23}

The general estimate in Theorem \ref{thm_cont_main} reduces the convergence analysis
of \eqref{dy_main} to the choice of an admissible coefficient function
$\theta(t)$ and the estimation of the function $A(t)$ defined in
\eqref{eq_Acont}. We first recover the classical convex and strongly convex
Nesterov dynamics as endpoint cases and then construct several families of
damping coefficients that transition between them. The transition functions
introduced below vanish at the origin, so the resulting dynamics are
considered on $[t_0,+\infty)$ with $t_0>0$.

\paragraph{Classical convex and strongly convex dynamics.} We first consider the convex case $\mu=0$. Assumption~\ref{ass_coeff} then
reduces to
$
    0<2\dot\theta(t)\le1.
$
The damping coefficient becomes
$
    \delta_0(t)=\frac{1+\dot\theta(t)}{\theta(t)},
$
or, equivalently,
\[
    \dot\theta(t)=\delta_0(t)\theta(t)-1.
\]
Conversely, for a prescribed positive damping coefficient $\delta_0$, define
\[
    p(t):=e^{\int_{t_0}^t\delta_0(s)\,\dd s},
    \qquad
    \Gamma_\delta(t):=p(t)\int_t^{+\infty}\frac{\dd s}{p(s)},
\]
whenever the integral is finite. A direct calculation gives
\[
    \dot\Gamma_\delta(t)=\delta_0(t)\Gamma_\delta(t)-1,
    \qquad
    \delta_0(t)=\frac{1+\dot\Gamma_\delta(t)}{\Gamma_\delta(t)}.
\]
Thus, the choice $\theta(t)=\Gamma_\delta(t)$ recovers the time-dependent
viscosity dynamic considered in
\cite[Proposition~2.1]{AttouchCabot2017}. Moreover,
\[
    0<2\dot\Gamma_\delta(t)\le1
    \quad\Longleftrightarrow\quad
    1<\delta_0(t)\Gamma_\delta(t)\le\frac32.
\]
Consequently, every damping coefficient satisfying this condition gives an
admissible convex choice in the present framework. Since
$A(t)=\theta(t)^2=\Gamma_\delta(t)^2$, Theorem \ref{thm_cont_main} yields
\[
    \Phi(x(t))-\Phi^*=\bigO\left(\Gamma_\delta(t)^{-2}\right),
\]
which recovers the convergence estimate established in
\cite[Corollary~3.4]{AttouchCabot2017}.

In particular, for
\[
    \delta_0(t)=\frac{\gamma}{t},
    \qquad
    \gamma\ge3,
\]
one has
\[
    \theta(t)=\Gamma_\delta(t)=\frac{t}{\gamma-1}.
\]
The dynamic \eqref{dy_main} then reduces to the convex NAG flow
\eqref{dy_NAG_C}. Since
\[
    A(t)=\theta(t)^2=\frac{t^2}{(\gamma-1)^2},
\]
Theorem \ref{thm_cont_main} gives
\[
    \Phi(x(t))-\Phi^*=\bigO(t^{-2}),
\]
recovering the classical convex NAG estimate
\cite{SuJMLR2016,AttouchMP18}. The admissible range $\gamma\ge3$ includes
both the critical choice $\gamma=3$ and the supercritical choices
$\gamma>3$.

We next consider the strongly convex case $\mu>0$. The constant choice
\[
    \theta(t)=\frac{1}{\sqrt{\mu}}
\]
satisfies Assumption~\ref{ass_coeff}, with
\[
    2\dot\theta(t)+\mu\theta(t)^2=1.
\]
The corresponding damping coefficient is
\[
    \delta_\mu(t)=\frac{1+\dot\theta(t)}{\theta(t)}+\mu\theta(t)
    =2\sqrt{\mu}.
\]
Hence, \eqref{dy_main} becomes the classical strongly convex NAG dynamic
\eqref{dy_NAG_SC}. In this case,
\[
    A(t)=\frac{1}{\mu}e^{\sqrt{\mu}(t-t_0)},
\]
and Theorem \ref{thm_cont_main} therefore gives
\[
    \Phi(x(t))-\Phi^*=\bigO\left(e^{-\sqrt{\mu}t}\right).
\]
This recovers the accelerated exponential rate of the classical strongly
convex NAG dynamic
\cite{LuoChen2022,Siegel2019,WilsonJmlr}.

These two endpoint choices determine the limiting behavior required of a
transition family. Writing $\theta_\mu$ when its dependence on $\mu$ needs
to be emphasized, we seek, for every fixed $t>0$,
\[
    \theta_\mu(t)\to\frac{t}{\gamma-1},
    \qquad
    \delta_\mu(t)\to\frac{\gamma}{t}
    \qquad
    \text{as }\mu\downarrow0,
\]
where $\gamma\ge3$. For every fixed $\mu>0$, the strongly convex asymptotic
regime requires
\[
    \theta_\mu(t)\to\frac{1}{\sqrt{\mu}},
    \qquad
    \delta_\mu(t)\to2\sqrt{\mu}
    \qquad
    \text{as }t\to+\infty.
\]
The following constructions realize these two endpoint behaviors within
admissible families of damping coefficients.

\paragraph{A hyperbolic family of transition damping coefficients.}
We first construct a hyperbolic family that extends the equality-branch
damping transition associated with the critical convex damping $3/t$ to
every convex damping $\gamma/t$ with $\gamma\ge3$. For completeness, we use the standard notation
\[
    \sinh r:=\frac{e^r-e^{-r}}{2},
    \qquad
    \cosh r:=\frac{e^r+e^{-r}}{2},
    \qquad
    \tanh r:=\frac{\sinh r}{\cosh r},
\]
and
\[
    \sech r:=\frac{1}{\cosh r},
    \qquad
    \coth r:=\frac{\cosh r}{\sinh r}
    \quad (r\ne0).
\]
We also use the identities
\[
    \frac{\dd}{\dd r}\tanh r=\sech^2r,
    \qquad
    \sech^2r+\tanh^2r=1.
\]
These identities allow us to verify the admissibility condition explicitly
and to derive the damping coefficient generated by the following
hyperbolic choice of $\theta(t)$.

\begin{theorem}\label{cor:hyp_transition}
Let $\mu>0$, $\gamma\ge3$, and
$
    \kappa:=\frac{1}{\gamma-1}\in(0,1/2].
$
Define
\[
    \theta(t)=\frac{1}{\sqrt{\mu}}
    \tanh\left(\kappa\sqrt{\mu}\,t\right).
\]
Then $\theta(t)$ satisfies Assumption~\ref{ass_coeff}, and the corresponding
damping coefficient is
\begin{equation}\label{eq_hyp_damp}
    \delta_\mu(t)
    =
    \sqrt{\mu}
    \left[
        (1+\kappa)\coth\left(\kappa\sqrt{\mu}\,t\right)
        +(1-\kappa)\tanh\left(\kappa\sqrt{\mu}\,t\right)
    \right].
\end{equation}
For every fixed $t>0$,
\[
    \delta_\mu(t)\to\frac{\gamma}{t}
    \qquad
    \text{as }\mu\downarrow0,
\]
whereas, for every fixed $\mu>0$,
\[
    \delta_\mu(t)\to2\sqrt{\mu}
    \qquad
    \text{as }t\to+\infty.
\]
Moreover, every global solution of \eqref{dy_main} satisfies, for every
$t\ge t_0>0$,
\[
    \Phi(x(t))-\Phi^*
    \le
    \mathcal E(t_0)
    \min\Bigg\{
        \frac{
            \cosh^{\gamma-1}(\kappa\sqrt{\mu}\,t_0)
        }{
            \kappa^2t^2
        },
        \frac{
            2^{\gamma-1}\mu
            \cosh^{\gamma-1}(\kappa\sqrt{\mu}\,t_0)
        }{
            \left(1-e^{-2\kappa\sqrt{\mu}\,t_0}\right)^2
        }
        e^{-\sqrt{\mu}\,t}
    \Bigg\}.
\]
In particular,
\[
    \Phi(x(t))-\Phi^*
    =
    \bigO\left(
        \min\left\{
            \frac{1}{t^2},
            e^{-\sqrt{\mu}t}
        \right\}
    \right).
\]
\end{theorem}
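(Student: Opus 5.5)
The plan is to verify everything by direct computation in the rescaled variable $r:=\kappa\sqrt{\mu}\,t$, and then apply Theorem~\ref{thm_cont_main} with two different lower bounds on $A(t)$. This mirrors the treatment of the equality branch above. We use throughout that $1/\kappa=\gamma-1$ and that $(1+\kappa)/\kappa=\gamma$.

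\emph{Admissibility and damping.} Using $\frac{\dd}{\dd r}\tanh r=\sech^2 r$, we get $\dot\theta(t)=\kappa\sech^2 r>0$, so $\theta$ is positive and increasing for $t>0$. The identity $\sech^2r+\tanh^2r=1$ then gives
\[
2\dot\theta(t)+\mu\theta(t)^2=2\kappa\sech^2 r+\tanh^2 r=1-(1-2\kappa)\sech^2 r .
\]
This lies in $(0,1]$ because $\kappa\le 1/2$; positivity holds since $2\kappa\sech^2 r>0$. So Assumption~\ref{ass_coeff} holds on $[t_0,+\infty)$ for any $t_0>0$.

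For the damping,
\[
\delta_\mu=\sqrt{\mu}\Bigl[\frac{1+\kappa\sech^2 r}{\tanh r}+\tanh r\Bigr].
\]
Substituting $\sech^2 r/\tanh r=1/(\sinh r\cosh r)=\coth r-\tanh r$ yields \eqref{eq_hyp_damp}.

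For the limits: as $\mu\downarrow0$ with $t$ fixed, $\sqrt{\mu}\coth(\kappa\sqrt{\mu}t)\to 1/(\kappa t)$ and $\sqrt{\mu}\tanh(\cdot)\to0$, so $\delta_\mu(t)\to(1+\kappa)/(\kappa t)=\gamma/t$. As $t\to\infty$ with $\mu$ fixed, $\coth$ and $\tanh$ both tend to $1$, so $\delta_\mu(t)\to\sqrt{\mu}\,[(1+\kappa)+(1-\kappa)]=2\sqrt{\mu}$.

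\emph{Computing $A(t)$.} Write $r_0:=\kappa\sqrt{\mu}t_0$. Then
\[
\int_{t_0}^t\mu\theta(s)\,\dd s=\frac1\kappa\bigl(\ln\cosh r-\ln\cosh r_0\bigr),
\]
and therefore
\[
A(t)=\frac{\tanh^2 r}{\mu}\Bigl(\frac{\cosh r}{\cosh r_0}\Bigr)^{\gamma-1}
=\frac{\sinh^2 r\,\cosh^{\gamma-3} r}{\mu\cosh^{\gamma-1}r_0}.
\]

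\emph{The $1/t^2$ bound.} Since $\gamma\ge3$ and $\cosh r\ge1$, we have $\cosh^{\gamma-3}r\ge1$. Together with $\sinh r\ge r$ this gives
\[
A(t)\ge\frac{\kappa^2t^2}{\cosh^{\gamma-1}r_0}.
\]

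\emph{The exponential bound.} Use $\sinh r=\tfrac12e^r(1-e^{-2r})\ge\tfrac12 e^r(1-e^{-2r_0})$ for $r\ge r_0$, and $\cosh r\ge\tfrac12e^r$. These give
\[
\sinh^2 r\cosh^{\gamma-3}r\ge 2^{-(\gamma-1)}(1-e^{-2r_0})^2e^{(\gamma-1)r}.
\]
Since $(\gamma-1)r=\sqrt{\mu}\,t$, this is the claimed bound
\[
A(t)\ge\frac{(1-e^{-2r_0})^2}{2^{\gamma-1}\mu\cosh^{\gamma-1}r_0}\,e^{\sqrt{\mu}t}.
\]

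Inserting both lower bounds into Theorem~\ref{thm_cont_main} and taking the minimum gives the stated estimate, and the $\bigO$ form follows immediately.

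The only delicate step is this last bookkeeping of constants. The factors $2^{\gamma-1}$ and $(1-e^{-2r_0})^2$ come out exactly only if $\sinh$ and $\cosh$ are bounded separately as above. Passing through $\tanh r_0\ge(1-e^{-2r_0})/2$ instead would lose an extra factor $4$.
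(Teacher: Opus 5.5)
Your proposal is correct and follows essentially the same route as the paper. You verify admissibility via $\sech^2 r+\tanh^2 r=1$, compute $A(t)=\sinh^2 r\,\cosh^{\gamma-3} r/(\mu\cosh^{\gamma-1} r_0)$ in closed form, and insert into Theorem~\ref{thm_cont_main} the quadratic lower bound (from $\sinh r\ge r$, $\cosh r\ge 1$) and the exponential one (using $(\gamma-1)\kappa=1$). The differences are only cosmetic: you derive \eqref{eq_hyp_damp} explicitly through $\sech^2 r/\tanh r=\coth r-\tanh r$, and you bound $\sinh r$ and $\cosh r$ separately rather than via the exact factorization $e^{(\gamma-1)r}2^{1-\gamma}(1-e^{-2r})^2(1+e^{-2r})^{\gamma-3}$; both give the same constant.
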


\begin{proof}
Differentiating $\theta(t)$ gives
\[
    \dot\theta(t)
    =
    \kappa\left(
        1-\tanh^2(\kappa\sqrt{\mu}\,t)
    \right).
\]
Consequently,
\[
\begin{aligned}
    2\dot\theta(t)+\mu\theta(t)^2
    &=
    2\kappa\left(
        1-\tanh^2(\kappa\sqrt{\mu}\,t)
    \right)
    +
    \tanh^2(\kappa\sqrt{\mu}\,t)\\
    &=
    2\kappa
    +
    (1-2\kappa)
    \tanh^2(\kappa\sqrt{\mu}\,t)
    \le1.
\end{aligned}
\]
The strict lower bound and the monotonicity of $\theta(t)$ follow immediately.
Thus, Assumption~\ref{ass_coeff} holds.

Substituting $\theta(t)$ and $\dot\theta(t)$ into the definition of
$\delta_\mu(t)$ gives \eqref{eq_hyp_damp}. Set
$
    r_\mu:=\kappa\sqrt{\mu}\,t.
$
For every fixed $t>0$, one has $r_\mu\to0$ as $\mu\downarrow0$. By
l'H\^{o}pital's rule,
\[
    \lim_{r\downarrow0}r\coth r
    =
    \lim_{r\downarrow0}\frac{r\cosh r}{\sinh r}
    =
    \lim_{r\downarrow0}
    \frac{\cosh r+r\sinh r}{\cosh r}
    =
    1.
\]
Moreover, since $|\tanh r|\le1$,
\[
    \sqrt{\mu}\tanh(r_\mu)\to0
    \qquad
    \text{as }\mu\downarrow0.
\]
Therefore,
\[
\begin{aligned}
    \delta_\mu(t)
    &=
    \sqrt{\mu}
    \left[
        (1+\kappa)\coth(r_\mu)
        +(1-\kappa)\tanh(r_\mu)
    \right]\\
    &=
    \frac{1+\kappa}{\kappa t}
    r_\mu\coth(r_\mu)
    +
    (1-\kappa)\sqrt{\mu}\tanh(r_\mu)
    \to
    \frac{1+\kappa}{\kappa t}.
\end{aligned}
\]
Since $\kappa=1/(\gamma-1)$, it follows that
\[
    \delta_\mu(t)
    \to
    \left(1+\frac{1}{\kappa}\right)\frac{1}{t}
    =
    \frac{\gamma}{t}
    \qquad
    \text{as }\mu\downarrow0.
\]

For every fixed $\mu>0$, one has
$
    r_\mu=\kappa\sqrt{\mu}\,t\to+\infty
$
as $t\to+\infty$. Since
\[
    \tanh r\to1
    \qquad\text{and}\qquad
    \coth r\to1
    \qquad
    \text{as }r\to+\infty,
\]
we obtain
\[
    \delta_\mu(t)
    \to
    \sqrt{\mu}
    \bigl((1+\kappa)+(1-\kappa)\bigr)
    =
    2\sqrt{\mu}
    \qquad
    \text{as }t\to+\infty.
\]

For the function $A(t)$ defined in \eqref{eq_Acont}, we have
\[
    \int_{t_0}^t\mu\theta(s)\,\dd s
    =
    \sqrt{\mu}
    \int_{t_0}^t
    \tanh\left(\kappa\sqrt{\mu}\,s\right)\,\dd s=
    \frac{1}{\kappa}
    \ln
    \frac{
        \cosh(\kappa\sqrt{\mu}\,t)
    }{
        \cosh(\kappa\sqrt{\mu}\,t_0)
    }.
\]
Since $1/\kappa=\gamma-1$, it follows that
\[
    A(t)
    =
    \frac{
        \sinh^2(\kappa\sqrt{\mu}\,t)
        \cosh^{\gamma-3}(\kappa\sqrt{\mu}\,t)
    }{
        \mu
        \cosh^{\gamma-1}(\kappa\sqrt{\mu}\,t_0)
    }.
\]
Since $\gamma\ge 1+1/\kappa\ge3$, $\sinh r\ge r$, and $\cosh r\ge1$ for $r\ge0$,
\[
    A(t)
    \ge
    \frac{
        \kappa^2t^2
    }{
        \cosh^{\gamma-1}(\kappa\sqrt{\mu}\,t_0)
    }.
\]
On the other hand,
\[
\sinh^2(\kappa\sqrt{\mu}\,t)
    \cosh^{\gamma-3}(\kappa\sqrt{\mu}\,t)
    =
    \frac{
        e^{(\gamma-1)\kappa\sqrt{\mu}\,t}
    }{
        2^{\gamma-1}
    }
    \left(
        1-e^{-2\kappa\sqrt{\mu}\,t}
    \right)^2
    \left(
        1+e^{-2\kappa\sqrt{\mu}\,t}
    \right)^{\gamma-3}.
\]
Since $(\gamma-1)\kappa=1$, for every $t\ge t_0$,
\[
    A(t)
    \ge
    \frac{
        \left(
            1-e^{-2\kappa\sqrt{\mu}\,t_0}
        \right)^2
    }{
        2^{\gamma-1}\mu
        \cosh^{\gamma-1}(\kappa\sqrt{\mu}\,t_0)
    }
    e^{\sqrt{\mu}\,t}.
\]
The conclusion now follows from Theorem \ref{thm_cont_main}.
\end{proof}

\begin{remark}
When $\gamma=3$, equivalently $\kappa=1/2$, the hyperbolic family reduces to
the equality-branch damping transition associated with the Luo-Chen scaled
NAG dynamic \cite[Section~3.2]{LuoChen2022} and the Kim-Yang unified
dynamic \cite[Theorem~4.1]{KimYang2023}. When $\gamma>3$, it produces a
damping transition from the supercritical convex damping $\gamma/t$ to the
strongly convex damping $2\sqrt{\mu}$. Thus, the construction extends the
critical equality-branch transition to the full accelerated convex range
$\gamma\ge3$, while preserving the $\bigO(t^{-2})$ and
$\bigO(e^{-\sqrt{\mu}t})$ estimates for the objective residual.
The coefficient
$
    \cosh^{\gamma-1}(\kappa\sqrt{\mu}\,t_0)/\kappa^2
$
in the polynomial estimate converges to $1/\kappa^2$ as
$\mu\downarrow0$. Equivalently, the corresponding quadratic lower bound on
$A(t)$ remains nondegenerate in the convex limit.
\end{remark}

\begin{remark}
The $\bigO(t^{-2})$ and
$\bigO(e^{-\sqrt{\mu}t})$ estimates describe different stages of the
damping transition. This distinction is particularly relevant when the
strong convexity parameter $\mu$ is small. In that case, the damping
coefficient remains close to the convex damping $\gamma/t$ over a relatively
long initial interval, and the exponential factor
$e^{-\sqrt{\mu}t}$ decreases slowly. After the constants in the two
estimates are taken into account, the $\bigO(t^{-2})$ estimate may therefore
provide the sharper bound during the initial and intermediate stages, in
agreement with the convex NAG-type behavior of the dynamics.
For every fixed $\mu>0$, the damping coefficient eventually approaches the
strongly convex value $2\sqrt{\mu}$, and the exponential estimate becomes
sharper for sufficiently large $t$. Thus, when $\mu$ is small, the dynamics
may exhibit a prolonged convex-type phase before entering the strongly
convex asymptotic regime. The location of this transition depends on $\mu$
and on the constants appearing in the two estimates.
\end{remark}

 \paragraph{A general construction of transition damping coefficients.}

The hyperbolic form is not essential. We next give general conditions on a
transition function that produce the same endpoint damping coefficients and
convergence rates. Let
\[
    \theta(t)=\frac{1}{\sqrt{\mu}}S(\sqrt{\mu}t),
\]
where $S:[0,+\infty)\to[0,1)$ is continuously differentiable, increasing,
and concave, with $S(0)=0$. Under this parametrization,
Assumption~\ref{ass_coeff} becomes
\begin{equation}\label{eq_S_cond}
    0<2S'(r)+S(r)^2\le1,
    \qquad
    \forall r\ge 0.
\end{equation}
The damping coefficient and the function $A(t)$ are given by
\begin{equation}\label{eq_S_sca}
\begin{aligned}
    \delta_\mu(t)
    &=
    \sqrt{\mu}
    \left(
        \frac{1+S'(\sqrt{\mu}t)}{S(\sqrt{\mu}t)}
        +
        S(\sqrt{\mu}t)
    \right),\\
    A(t)
    &=
    \frac{1}{\mu}
    S(\sqrt{\mu}t)^2
    e^{
        \int_{\sqrt{\mu}t_0}^{\sqrt{\mu}t}
        S(r)\,\dd r
    }.
\end{aligned}
\end{equation}

Assume that
\begin{equation}\label{eq_S_asy}
\begin{gathered}
    S'(0)=\kappa \quad\text{with}\ \kappa\in(0,\frac{1}{2}],\\
    S(r)\to1,
    \qquad
    S'(r)\to0
    \qquad
    \text{as }r\to+\infty,\\
    \int_0^{+\infty}\bigl(1-S(r)\bigr)\,\dd r<+\infty.
\end{gathered}
\end{equation}
The behavior of $S$ near the origin determines the limiting convex damping,
whereas its behavior at infinity determines the strongly convex asymptotic
regime. The integrability condition in \eqref{eq_S_asy} ensures that the
approach of $S$ to its limiting value is sufficiently fast to yield an
exponential convergence estimate. The following theorem makes these
properties precise.

\begin{theorem}\label{thm:general_transition}
Let $\mu>0$, and let $S:[0,+\infty)\to[0,1)$ be continuously
differentiable, increasing, and concave, with $S(0)=0$. Suppose that
\eqref{eq_S_cond} and \eqref{eq_S_asy} hold. For the coefficient choice
\[
    \theta(t)=\frac{1}{\sqrt{\mu}}S(\sqrt{\mu}t),
\]
the corresponding damping coefficient satisfies, for every fixed $t>0$,
\[
    \delta_\mu(t)\to\frac{\gamma}{t}
    \qquad
    \text{as }\mu\downarrow0,
    \qquad
    \gamma:=1+\frac{1}{\kappa}\ge3,
\]
and, for every fixed $\mu>0$,
\[
    \delta_\mu(t)\to2\sqrt{\mu}
    \qquad
    \text{as }t\to+\infty.
\]
Moreover, the corresponding trajectory  of dynamic
\eqref{dy_main} satisfies, for every $t\ge t_0>0$, 
\[    \Phi(x(t))-\Phi^*
    \le
    \mathcal E(t_0)
    \min\left\{
        \frac{1}{C_{\mu,t_0}t^2},
        \frac{
            \mu e^{\sqrt{\mu}t_0+\xi}
        }{
            S(\sqrt{\mu}t_0)^2
        }
        e^{-\sqrt{\mu}t}
    \right\},
\]
where $C_{\mu,t_0}$ is defined in
\eqref{eq_Squad_cons} and $\xi=\int_0^{+\infty}\bigl(1-S(r)\bigr)\,\dd r<+\infty$. In particular,
\[
    \Phi(x(t))-\Phi^*
    =
    \bigO\left(
        \min\left\{
            \frac{1}{t^2},
            e^{-\sqrt{\mu}t}
        \right\}
    \right).
\]
\end{theorem}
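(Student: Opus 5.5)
Under \eqref{eq_S_cond}, the choice $\theta(t)=S(\sqrt{\mu}t)/\sqrt{\mu}$ satisfies Assumption~\ref{ass_coeff}. Indeed, $\theta>0$ on $[t_0,\infty)$ because $S$ is increasing with $S(0)=0$, and $\theta$ is nondecreasing. The plan is therefore to obtain the convergence estimate from Theorem~\ref{thm_cont_main}, after establishing two lower bounds on $A(t)$ given by \eqref{eq_S_sca}. Before that, I verify the two damping limits from the formula for $\delta_\mu$ in \eqref{eq_S_sca}.

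\textbf{Damping limits.} Fix $t>0$ and set $r=\sqrt{\mu}t\to0$. Since $S(0)=0$ and $S'(0)=\kappa$, we have $S(r)/r\to\kappa$ and $S'(r)\to\kappa$ by continuity of $S'$. Hence
\[
\frac{\sqrt{\mu}\,(1+S'(r))}{S(r)}=\frac{1+S'(r)}{t\,S(r)/r}\to\frac{1+\kappa}{\kappa t}=\frac{\gamma}{t},
\]
while $\sqrt{\mu}S(r)\to0$. Now fix $\mu$ and let $t\to\infty$. Then $S\to1$ and $S'\to0$ give $\delta_\mu\to\sqrt{\mu}(1+1)=2\sqrt{\mu}$. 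The inequality $\gamma\ge3$ follows directly from $\kappa\le1/2$.

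\textbf{Exponential bound.} Since $S$ is increasing, $S(\sqrt{\mu}t)\ge S(\sqrt{\mu}t_0)$. We also have
\[
\int_{\sqrt{\mu}t_0}^{\sqrt{\mu}t}S=\sqrt{\mu}(t-t_0)-\int_{\sqrt{\mu}t_0}^{\sqrt{\mu}t}(1-S)\ge\sqrt{\mu}t-\sqrt{\mu}t_0-\xi,
\]
which uses $1-S\ge0$ and $\xi<\infty$. Therefore
\[
A(t)\ge \frac{S(\sqrt{\mu}t_0)^2}{\mu}\,e^{-\sqrt{\mu}t_0-\xi}\,e^{\sqrt{\mu}t}.
\]
This yields exactly the second term in the minimum.

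\textbf{Quadratic bound (main obstacle).} We need $A(t)\ge C_{\mu,t_0}t^2$ with $C_{\mu,t_0}$ independent of $t$ and, ideally, nondegenerate as $\mu\downarrow0$. The natural route uses concavity. Since $S$ is concave with $S(0)=0$, the chord slope $S(r)/r$ is nonincreasing, so $S(r)\ge r\,S(R)/R$ on $[0,R]$. However, $S(r)/r\to0$ as $r\to\infty$, so the bound $S(\sqrt{\mu}t)^2\ge c\mu t^2$ can only hold on a bounded range. I would compensate with the exponential factor and split into two regimes.

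\emph{Regime $\sqrt{\mu}t\le R$} (e.g.\ $R=1$). Here $S(\sqrt{\mu}t)\ge S(R)\sqrt{\mu}t/R$ and the exponential factor is at least $1$. This gives $A(t)\ge (S(R)/R)^2t^2$.

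\emph{Regime $\sqrt{\mu}t\ge R$.} Here $S(\sqrt{\mu}t)\ge S(R)$, and the exponential factor is at least $e^{\sqrt{\mu}t-\sqrt{\mu}t_0-\xi}$. Using $e^{u}\ge u^2e^{2}/4$ for $u\ge0$, applied with $u=\sqrt{\mu}t$, gives $A(t)\ge S(R)^2e^{2-\sqrt{\mu}t_0-\xi}t^2/4$.

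Taking the minimum of the two constants defines $C_{\mu,t_0}$, which I would label \eqref{eq_Squad_cons}. An alternative is to bound $S(\sqrt\mu t)\ge S(\sqrt\mu t_0)\,t/t_0$ for $t$ up to $1/\sqrt\mu$ and argue similarly. In either case, the delicate point is handling the exponential factor uniformly.

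Combining both lower bounds with Theorem~\ref{thm_cont_main} gives the stated estimate and the $\bigO(\min\{t^{-2},e^{-\sqrt{\mu}t}\})$ rate.
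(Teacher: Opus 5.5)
Your proof is correct. The damping limits and the exponential lower bound on $A(t)$ follow the paper's argument exactly; the only real difference is in the quadratic bound.

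\textbf{Where the routes agree.} With $R=1$, both you and the paper use $S(r)\ge S(1)\min\{r,1\}$, which follows from concavity through the origin plus monotonicity. Both conclude $A(t)\ge S(1)^2t^2$ when $\sqrt{\mu}t\le 1$.

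\textbf{Where they differ.} For $\sqrt{\mu}t\ge1$, the paper never uses the integrability hypothesis. It bounds the exponent by $\int_1^{\sqrt\mu t}S\ge S(1)(\sqrt\mu t-1)$ and then minimizes $e^{S(1)(y-1)}/y^2$ over $y\ge1$. That bound on the exponent needs $\sqrt\mu t_0\le1$, so the paper treats $\sqrt\mu t_0>1$ as a separate case, with $S(1)$ replaced by $S(\sqrt\mu t_0)$. You instead reuse the $\xi$-bound $\int_{\sqrt\mu t_0}^{\sqrt\mu t}S\ge\sqrt\mu(t-t_0)-\xi$ together with $e^u\ge e^2u^2/4$. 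This holds for every $t_0$ at once and gives the single constant $\min\{S(1)^2,\tfrac14 S(1)^2e^{2-\sqrt\mu t_0-\xi}\}$.

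\textbf{What each route buys.} Your route is shorter and avoids the case split on $t_0$. Its cost is the factor $e^{-\xi}$. Concavity forces $S(r)\le\kappa r$, hence $\xi\ge 1/(2\kappa)$, so your constant can be exponentially small in $1/\kappa$. By contrast, the paper's constant $\min\{S(1)^2,\tfrac14S(1)^4e^{2-S(1)}\}$ (case $\sqrt\mu t_0\le1$) is polynomial in $S(1)$. It also shows that the $\bigO(t^{-2})$ phase does not depend on the integrability assumption at all, which matches the paper's remark that the polynomial estimate does not rely on the strong-convexity decay.

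\textbf{On the constant.} Your constant is not literally the $C_{\mu,t_0}$ of \eqref{eq_Squad_cons}. You therefore prove the stated inequality with a different constant. It is still uniform for $0<\mu\le t_0^{-2}$, since $e^{-\sqrt\mu t_0}\ge e^{-1}$ there. The conclusion $\bigO(\min\{t^{-2},e^{-\sqrt\mu t}\})$ is unaffected.
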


\begin{proof}
Since $S$ is continuously differentiable, $S(0)=0$, and
$S'(0)=\kappa$, we have
\[
    S(r)=\kappa r+o(r),
    \qquad
    S'(r)=\kappa+o(1)
    \qquad
    \text{as }r\downarrow0.
\]
It follows from \eqref{eq_S_sca} that, for every fixed $t>0$,
\[
    \delta_\mu(t)
    \to
    \left(
        1+\frac{1}{\kappa}
    \right)\frac{1}{t}
    =
    \frac{\gamma}{t}
    \qquad
    \text{as }\mu\downarrow0.
\]
The assumptions $S(r)\to1$ and $S'(r)\to0$ similarly give
\[
    \delta_\mu(t)\to2\sqrt{\mu}
    \qquad
    \text{as }t\to+\infty.
\]

We next derive lower bounds for $A(t)$. Set
\[
    \xi:=\int_0^{+\infty}\bigl(1-S(r)\bigr)\,\dd r<+\infty.
\]
For every $t\ge t_0$,
\[
\begin{aligned}
    \int_{\sqrt{\mu}t_0}^{\sqrt{\mu}t}S(r)\,\dd r
    &=
    \sqrt{\mu}(t-t_0)
    -
    \int_{\sqrt{\mu}t_0}^{\sqrt{\mu}t}
    \bigl(1-S(r)\bigr)\,\dd r\\
    &\ge
    \sqrt{\mu}(t-t_0)-\xi.
\end{aligned}
\]
Since $S$ is increasing,
$
    S(\sqrt{\mu}t)\ge S(\sqrt{\mu}t_0),
$
and hence
\begin{equation}\label{eq_S_exp}
    A(t)
    \ge
    \frac{S(\sqrt{\mu}t_0)^2}{\mu}
    e^{-\sqrt{\mu}t_0-\xi}
    e^{\sqrt{\mu}t},
    \qquad
    t\ge t_0.
\end{equation}

To obtain a quadratic lower bound, observe that the concavity of $S$ and
$S(0)=0$ imply
\[
    S(r)
    \ge
    (1-r)S(0)+rS(1)
    =
    rS(1),
    \qquad
    0\le r\le1.
\]
The monotonicity of $S$ also gives
\[
    S(r)\ge S(1),
    \qquad
    r\ge1.
\]
Therefore,
\begin{equation}\label{eq_S_low}
    S(r)\ge S(1)\min\{r,1\},
    \qquad
    r\ge0.
\end{equation}

Suppose first that $\sqrt{\mu}\,t_0\le1$. If
$\sqrt{\mu}\,t\le1$, then the integral in \eqref{eq_S_sca} is
nonnegative. Hence, \eqref{eq_S_low} gives
\[
    A(t)\ge\frac{1}{\mu}S(\sqrt{\mu}t)^2\ge S(1)^2t^2.
\]
If $\sqrt{\mu}\,t\ge1$, then $S(\sqrt{\mu}t)\ge S(1)$, 
\[
    \int_{\sqrt{\mu}t_0}^{\sqrt{\mu}t}S(r)\,\dd r
    \ge
    \int_1^{\sqrt{\mu}t}S(r)\,\dd r
    \ge
    S(1)\bigl(\sqrt{\mu}t-1\bigr).
\]
Consequently,  we obtain
\[
    A(t)
    \ge
    \frac{S(1)^2}{\mu}
    e^{S(1)(\sqrt{\mu}t-1)}
    =
    S(1)^2
    \frac{
        e^{S(1)(\sqrt{\mu}t-1)}
    }{
        (\sqrt{\mu}t)^2
    }
    t^2.
\]
For every $y\ge1$, the function
$
    y\mapsto e^{S(1)(y-1)}/y^2
$
attains its minimum at $y=2/S(1)$. Therefore,
\[
    \frac{e^{S(1)(y-1)}}{y^2}
    \ge
    \frac{S(1)^2}{4}e^{2-S(1)},
    \qquad
    y\ge1.
\]
Taking $y=\sqrt{\mu}t$ yields
\[
    A(t)
    \ge
    \frac{S(1)^4}{4}e^{2-S(1)}t^2
    \qquad
    \text{whenever }\sqrt{\mu}t\ge1.
\]
Consequently, if $\sqrt{\mu}\,t_0\le1$, then
\begin{equation*}\label{eq_Aest1}
	 A(t)
    \ge
    \min\left\{
        S(1)^2,
        \frac{S(1)^4}{4}e^{2-S(1)}
    \right\}t^2,
    \qquad
    t\ge t_0.
\end{equation*}

Suppose next that $\sqrt{\mu}\,t_0>1$. For every $t\ge t_0$, the
monotonicity of $S$ gives
\[
    S(\sqrt{\mu}t)\ge S(\sqrt{\mu}t_0),
    \qquad
    \int_{\sqrt{\mu}t_0}^{\sqrt{\mu}t}S(r)\,\dd r
    \ge
    S(\sqrt{\mu}t_0)\sqrt{\mu}(t-t_0).
\]
Consequently,
\[
    A(t)\ge
    \frac{S(\sqrt{\mu}t_0)^2}{\mu}
    e^{S(\sqrt{\mu}t_0)\sqrt{\mu}(t-t_0)}.\]
Arguing as in the preceding case, with $S(1)$ replaced by $S(\sqrt{\mu}t_0),$
we obtain
\[
    \frac{
        e^{
            S(\sqrt{\mu}t_0)
            (\sqrt{\mu}t-\sqrt{\mu}t_0)
        }
    }{
        (\sqrt{\mu}t)^2
    }
    \ge
    \frac{S(\sqrt{\mu}t_0)^2}{4}
    e^{
        2-S(\sqrt{\mu}t_0)\sqrt{\mu}t_0
    }.
\]
Therefore,
\[
    A(t)
    \ge
    \frac{S(\sqrt{\mu}t_0)^4}{4}
    e^{
        2-S(\sqrt{\mu}t_0)\sqrt{\mu}t_0
    }
    t^2,
    \qquad
    t\ge t_0.
\]

Combining these  two cases, define
\begin{equation}\label{eq_Squad_cons}
    C_{\mu,t_0}
    :=
    \begin{cases}
    \displaystyle
    \min\left\{
        S(1)^2,
        \frac{S(1)^4}{4}e^{2-S(1)}
    \right\},
    & \sqrt{\mu}\,t_0\le1,\\[1.2em]
    \displaystyle
    \frac{S(\sqrt{\mu}t_0)^4}{4}
    e^{2-S(\sqrt{\mu}t_0)\sqrt{\mu}t_0},
    & \sqrt{\mu}\,t_0>1.
    \end{cases}
\end{equation}
Then
\begin{equation}\label{eq_S_low_quad}
    A(t)\ge C_{\mu,t_0}t^2,
    \qquad
    t\ge t_0.
\end{equation}
The asserted estimate follows from \eqref{eq_S_exp},
\eqref{eq_S_low_quad}, and Theorem \ref{thm_cont_main}.
\end{proof}

Thus, the two-regime convergence behavior established for the hyperbolic
family extends to every transition function satisfying
\eqref{eq_S_cond} and \eqref{eq_S_asy}. Moreover, for every fixed
$t_0>0$ and $0<\mu\le t_0^{-2}$, the constant $C_{\mu,t_0}$ in
\eqref{eq_Squad_cons} is positive and independent of $\mu$, so the
quadratic lower bound on $A(t)$ remains nondegenerate as
$\mu\downarrow0$.

We now give three non-hyperbolic transition functions satisfying
\eqref{eq_S_cond} and \eqref{eq_S_asy}. They generate the same endpoint
damping coefficients and convergence rates, but differ in the rate at which
the corresponding damping coefficients approach $2\sqrt{\mu}$.

\begin{corollary}\label{cor_disT}
Let $0<\kappa\le1/2$. The following transition functions satisfy
\eqref{eq_S_cond} and \eqref{eq_S_asy}:
\begin{equation}\label{eq_three_choices}
\begin{aligned}
    &\text{\rm Exponential:}\qquad
    S_{\rm exp}(r)
    =
    1-e^{-\kappa r},
    &&r\ge0,\\
    &\text{\rm Algebraic:}\qquad
    S_{\rm alg}(r)
    =
    \frac{\kappa r}{\sqrt{1+\kappa^2r^2}},
    &&r\ge0,\\
    &\text{\rm Polynomial:}\qquad
    S_{\rm poly}(r)
    =
    1-\left(1+\frac{\kappa r}{p}\right)^{-p},
    &&r\ge0,\quad p>1.
\end{aligned}
\end{equation}
Consequently, for each of these choices, the corresponding damping
coefficient satisfies
\[
    \delta_\mu(t)\to\frac{\gamma}{t}
    \quad\text{as }\mu\downarrow0,
    \qquad
    \delta_\mu(t)\to2\sqrt{\mu}
    \quad\text{as }t\to+\infty,
\]
where
$
    \gamma=1+\frac{1}{\kappa}\ge3,
$
and every global solution of \eqref{dy_main} satisfies
\[
    \Phi(x(t))-\Phi^*
    =
    \bigO\left(
        \min\left\{
            \frac{1}{t^2},
            e^{-\sqrt{\mu}t}
        \right\}
    \right).
\]
\end{corollary}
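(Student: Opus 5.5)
The proof reduces to Theorem~\ref{thm:general_transition}. For each of the three functions in \eqref{eq_three_choices} we check the standing hypotheses: $S:[0,+\infty)\to[0,1)$ is $C^1$, increasing, concave, with $S(0)=0$. We then check \eqref{eq_S_cond} and \eqref{eq_S_asy}. Once these hold, the limits $\delta_\mu(t)\to\gamma/t$ and $\delta_\mu(t)\to 2\sqrt{\mu}$ and the $\bigO(\min\{t^{-2},e^{-\sqrt{\mu}t}\})$ estimate follow directly from that theorem, with $\gamma=1+1/\kappa$.

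\emph{Basic properties.} The derivatives are
\[
S_{\rm exp}'=\kappa e^{-\kappa r},\qquad
S_{\rm alg}'=\kappa(1+\kappa^2r^2)^{-3/2},\qquad
S_{\rm poly}'=\kappa(1+\kappa r/p)^{-p-1}.
\]
Each is positive and decreasing, so each $S$ is increasing and concave. Each equals $\kappa$ at $r=0$, each vanishes at $0$, each takes values in $[0,1)$, and each tends to $1$ as $r\to+\infty$, with $S'\to0$.

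\emph{Integrability of $1-S$.} We have $1-S_{\rm exp}=e^{-\kappa r}$, which is integrable, and $1-S_{\rm poly}=(1+\kappa r/p)^{-p}$, which is integrable because $p>1$; this is exactly where $p>1$ is used. For the algebraic case,
\[
1-S_{\rm alg}=\frac{\sqrt{1+\kappa^2r^2}-\kappa r}{\sqrt{1+\kappa^2r^2}}
=\frac{1}{\sqrt{1+\kappa^2r^2}\,\bigl(\sqrt{1+\kappa^2r^2}+\kappa r\bigr)}
=\bigO(r^{-2}),
\]
which is integrable.

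\emph{The admissibility condition \eqref{eq_S_cond}.} This is the main step. The lower bound $2S'+S^2>0$ is immediate, since $S'>0$. For the upper bound $2S'+S^2\le1$, write $u=1-S\in(0,1]$, so the condition becomes
\[
2S'\le 1-S^2=u(2-u).
\]
\begin{itemize}
\item Exponential: $S'=\kappa u$. The condition is $2\kappa u\le u(2-u)$, i.e. $2\kappa\le 2-u$. This holds because $u\le1$ and $\kappa\le1/2$.
\item Polynomial: $u=(1+\kappa r/p)^{-p}$ and $S'=\kappa u^{1+1/p}$. The condition is $2\kappa u^{1/p}\le 2-u$. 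This holds because $u^{1/p}\le1$, $2\kappa\le1$, and $2-u\ge1$.
\item Algebraic: set $q=\kappa r$ and $w=(1+q^2)^{-1/2}\in(0,1]$. Then $1-S^2=w^2$ and $S'=\kappa w^3$. The condition is $2\kappa w^3\le w^2$, i.e. $2\kappa w\le1$, which again follows from $\kappa\le1/2$.
\end{itemize}

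In each case the bound reduces to $2\kappa\le1$ combined with a trivial bound on $u$ or $w$. After these checks, Theorem~\ref{thm:general_transition} applies and gives the stated conclusions.
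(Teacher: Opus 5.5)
Your proposal is correct and follows the paper's proof essentially step by step. Both verify the standing hypotheses, \eqref{eq_S_cond}, and \eqref{eq_S_asy} for each function and then invoke Theorem~\ref{thm:general_transition}; your reduction to $2S'\le u(2-u)$ with $u=1-S$ is the same identity the paper writes as $2S'+S^2=1-(1-S)(1+S-2\kappa)$, together with its algebraic and polynomial analogues. The differences are cosmetic: the paper expresses $S'$ and $S''$ in terms of $S$ and evaluates $\int_0^{\infty}(1-S)\,\dd r$ exactly ($1/\kappa$, $1/\kappa$, $p/(\kappa(p-1))$), whereas you check concavity via the monotonicity of $S'$ and only show these integrals are finite, which is all the theorem requires.
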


\begin{proof}
The first derivatives are
\[
\begin{aligned}
    S_{\rm exp}'(r)
    &=
    \kappa\bigl(1-S_{\rm exp}(r)\bigr),\\
    S_{\rm alg}'(r)
    &=
    \kappa\bigl(1-S_{\rm alg}(r)^2\bigr)^{3/2},\\
    S_{\rm poly}'(r)
    &=
    \kappa\bigl(1-S_{\rm poly}(r)\bigr)^{1+1/p}.
\end{aligned}
\]
Their second derivatives are
\[
\begin{aligned}
    S_{\rm exp}''(r)
    &=
    -\kappa^2\bigl(1-S_{\rm exp}(r)\bigr),\\
    S_{\rm alg}''(r)
    &=
    -3\kappa^2S_{\rm alg}(r)
    \bigl(1-S_{\rm alg}(r)^2\bigr)^2,\\
    S_{\rm poly}''(r)
    &=
    -\kappa^2\left(1+\frac1p\right)
    \bigl(1-S_{\rm poly}(r)\bigr)^{1+2/p}.
\end{aligned}
\]
Thus, all three transition functions are increasing and concave. They also
satisfy
\[
    S(0)=0,
    \qquad
    S'(0)=\kappa,
    \qquad
    S(r)\to1,
    \qquad
    S'(r)\to0
    \quad
    \text{as }r\to+\infty.
\]

Moreover,
\[
\begin{aligned}
    2S_{\rm exp}'+S_{\rm exp}^2
    &=
    1-(1-S_{\rm exp})
    (1+S_{\rm exp}-2\kappa),\\
    2S_{\rm alg}'+S_{\rm alg}^2
    &=
    1-(1-S_{\rm alg}^2)
    \left(
        1-2\kappa\sqrt{1-S_{\rm alg}^2}
    \right),\\
    2S_{\rm poly}'+S_{\rm poly}^2
    &=
    1-(1-S_{\rm poly})
    \left(
        1+S_{\rm poly}
        -2\kappa(1-S_{\rm poly})^{1/p}
    \right).
\end{aligned}
\]
Since $0<\kappa\le1/2$, $0\le S(r)<1$, and $S'(r)>0$, all three
quantities belong to $(0,1]$. Hence, \eqref{eq_S_cond} holds.

Finally,
\[
\begin{aligned}
    \int_0^{+\infty}
    \bigl(1-S_{\rm exp}(r)\bigr)\,\dd r
    &=
    \frac1\kappa,\\
    \int_0^{+\infty}
    \bigl(1-S_{\rm alg}(r)\bigr)\,\dd r
    &=
    \frac1\kappa,\\
    \int_0^{+\infty}
    \bigl(1-S_{\rm poly}(r)\bigr)\,\dd r
    &=
    \frac{p}{\kappa(p-1)}.
\end{aligned}
\]
Thus, \eqref{eq_S_asy} holds, and the conclusion follows from
Theorem \ref{thm:general_transition}.
\end{proof}

To compare the different constructions, we also include the hyperbolic
transition function
\[
    S_{\rm hyp}(r):=\tanh(\kappa r).
\]
Together with the exponential, algebraic, and polynomial choices in
\eqref{eq_three_choices}, these four transition functions
generate damping coefficients satisfying the two endpoint requirements
\[
\begin{aligned}
    \delta_\mu(t)&\to\frac{\gamma}{t}
    &&\text{as }\mu\downarrow0,\qquad \gamma=1+\frac1\kappa\ge3,\\
    \delta_\mu(t)&\to2\sqrt{\mu}
    &&\text{as }t\to+\infty.
\end{aligned}
\]
They therefore provide different admissible realizations of the same
transition from convex to strongly convex damping.

For a general transition function $S$, with
$r=\sqrt{\mu}t$, one has
\[
    \frac{\delta_\mu(t)}{\sqrt{\mu}}-2
    =
    \frac{S'(r)+(1-S(r))^2}{S(r)}.
\]
Then, the behavior near the strongly convex endpoint is,
\begin{equation}\label{eq_toSC}
	\begin{aligned}
    \delta_\mu^{\rm hyp}(t)-2\sqrt{\mu}
    &\sim
    4\kappa\sqrt{\mu}\,e^{-2\kappa r},\\
    \delta_\mu^{\rm exp}(t)-2\sqrt{\mu}
    &\sim
    \kappa\sqrt{\mu}\,e^{-\kappa r},\\
    \delta_\mu^{\rm alg}(t)-2\sqrt{\mu}
    &\sim
    \frac{\sqrt{\mu}}{\kappa^2r^3},\\
    \delta_\mu^{\rm poly}(t)-2\sqrt{\mu}
    &\sim
    \frac{p^{p+1}\sqrt{\mu}}{\kappa^p r^{p+1}}
    \qquad
    \text{as }r\to+\infty.
\end{aligned}
\end{equation}
Thus, the hyperbolic choice approaches the strongly convex damping most
rapidly among the four constructions considered here. The exponential,
algebraic, and polynomial choices provide more gradual transitions, with the
parameter $p$ controlling the asymptotic transition rate within the
polynomial family. Moreover,
\[
    S_{\rm poly}(r)\to S_{\rm exp}(r)
    \qquad
    \text{as }p\to+\infty
\]
for every fixed $r\ge0$.

Near the convex endpoint, the four transition functions satisfy
\[
\begin{aligned}
    S_{\rm hyp}(r)
    &=
    \kappa r-\frac{\kappa^3}{3}r^3+\bigO(r^5),\\
    S_{\rm exp}(r)
    &=
    \kappa r-\frac{\kappa^2}{2}r^2+\bigO(r^3),\\
    S_{\rm alg}(r)
    &=
    \kappa r-\frac{\kappa^3}{2}r^3+\bigO(r^5),\\
    S_{\rm poly}(r)
    &=
    \kappa r
    -
    \frac{p+1}{2p}\kappa^2r^2
    +
    \bigO(r^3).
\end{aligned}
\]
In particular, all four choices have the same leading-order convex behavior
$S(r)\sim\kappa r$. The hyperbolic and algebraic choices deviate from this
linear behavior only at cubic order, whereas the exponential and polynomial
choices deviate at quadratic order. These different local and asymptotic
behaviors allow the duration and shape of the transition between the two
damping regimes to be adjusted.

In the critical case $\kappa=1/2$, the hyperbolic choice reduces to the
equality-branch transition associated with the Kim-Yang dynamic
\cite{KimYang2023}. Among the four choices considered here, from \eqref{eq_toSC}, this transition
approaches the strongly convex damping most rapidly, whereas the
exponential, algebraic, and polynomial choices provide more gradual
transitions. In the final numerical section, we compare these transition
families with the classical convex and strongly convex flows and with the
equality-branch transition. The experiments will show that, on the tested
problems with a small strong convexity parameter, suitable gradual
transitions can provide more favorable transient objective decrease while
retaining the accelerated strongly convex asymptotic rate, with the
improvement becoming particularly pronounced for very small values of
$\mu$.

 \section{Discrete accelerated forward-backward schemes}
\label{sec_discrete}

In this section, we construct discrete accelerated forward-backward
counterparts of the continuous-time transition framework. We consider the
composite optimization problem
\[
    \min_{x\in\cH}\Phi(x):=f(x)+g(x),
\]
where $\cH$ is a real Hilbert space,
$f:\cH\to\mathbb R$ is continuously Fr\'echet differentiable and
$\mu$-strongly convex for some $\mu\ge0$, and $\nabla f$ is
$L$-Lipschitz continuous for some $L>0$. The function
$g:\cH\to(-\infty,+\infty]$ is assumed to be proper, lower
semicontinuous, and convex. We also assume that
$\operatorname{argmin}\Phi\ne\emptyset$. The strong convexity and
Lipschitz continuity of $\nabla f$ imply
$
    0\le\mu\le L.
$ Throughout this section, we can assume $\mu< L$, otherwise $f$ is quadratic and the problem is trivial.

We construct discrete composite counterparts of the second-order dynamic
\eqref{dy_main} through its two-state representation. More precisely, setting
$\beta(t)=\theta(t)$ in \eqref{dy_coup} gives the two-state formulation
corresponding to \eqref{dy_main}. Replacing $\nabla\Phi$ by the composite
operator $\nabla f+\partial g$ then leads to
\begin{equation}\label{eq_discrete_dynamics}
\begin{aligned}
    \dot x(t)
    &=
    \frac{1}{\theta(t)}
    \bigl(z(t)-x(t)\bigr),\\
    \dot z(t)
    &\in
    -\mu\theta(t)\bigl(z(t)-x(t)\bigr)
    -\theta(t)
    \bigl(\nabla f(x(t))+\partial g(x(t))\bigr).
\end{aligned}
\end{equation}
The first equation couples the primal state $x$ with the auxiliary state
$z$, while the second combines the strong-convexity feedback with the
composite first-order operator. Rather than discretizing the second-order
dynamics directly, we discretize the equivalent two-state formulation
\eqref{eq_discrete_dynamics}. This makes it possible to evaluate the smooth
and nonsmooth terms at suitable discrete points and thereby obtain
implementable accelerated forward-backward schemes.

 For $\lambda>0$, the proximal mapping of $g$ is defined by
\[
    \prox_{\lambda g}(u)
    :=
    \argmin_{v\in\cH}
    \left\{
        g(v)+\frac{1}{2\lambda}\norm{v-u}^{2}
    \right\}.
\]
Equivalently,
\[
    v=\prox_{\lambda g}(u)
    \quad\Longleftrightarrow\quad
    \frac{u-v}{\lambda}\in\partial g(v).
\]

We construct two discretizations of \eqref{eq_discrete_dynamics}. The first
eliminates the auxiliary state and produces an accelerated
forward-backward iteration involving only the sequence $\{x_k\}$. The
second retains $\{z_k\}$ and gives an explicit two-sequence representation.
Although their implementations differ, both schemes are generated by the
same inertial coefficient.

 \subsection{One-sequence and two-sequence discretizations}
\label{subsec_discrete_schemes}

Throughout this part, let $0<s\le 1/L$. Then
$
    0\le s\mu\le \frac{\mu}{L}<1,
$
so the factors $1-s\mu$ and $1-\sqrt{s\mu}$ appearing in the coefficient
formulas below are strictly positive. Let $\{\theta_k\}_{k\ge1}$ be a
sequence of positive coefficients. Both discretizations use the step size
$\sqrt{s}$ and are generated by the same sequence $\{\theta_k\}$. For consistency between the one-sequence and two-sequence formulations, we
initialize $x_0=x_1=z_1\in\operatorname{dom} g$.

\paragraph{One-sequence discretization.}

Using $\sqrt{s}$ as the step size in both equations, we discretize
\eqref{eq_discrete_dynamics} as
\begin{equation}\label{eq_one_system}
\begin{aligned}
    \frac{x_{k+1}-x_k}{\sqrt{s}}
    &=
    \frac{1}{\theta_{k+1}}
    \bigl(z_{k+1}-x_k\bigr),\\
    \frac{z_{k+1}-z_k}{\sqrt{s}}
    &\in
    -\mu\theta_{k+1}(z_k-y_k)
    -\theta_{k+1}
    \bigl(\nabla f(y_k)+\partial g(x_{k+1})\bigr),
\end{aligned}
\end{equation}
where
\[
    y_k:=x_k+\beta_k(x_k-x_{k-1})
\]
is the extrapolated point. The smooth gradient is evaluated explicitly at
$y_k$, whereas the nonsmooth term is evaluated implicitly at the new
iterate $x_{k+1}$. These evaluation points are chosen so that eliminating
the auxiliary state produces the standard proximal-gradient step
\begin{equation}\label{eq_one_prox}
    x_{k+1}
    =
    \prox_{sg}\bigl(y_k-s\nabla f(y_k)\bigr).
\end{equation}
By the characterization of the proximal mapping, \eqref{eq_one_prox} is
equivalent to
\[
    \frac{y_k-x_{k+1}}{s}
    \in
    \nabla f(y_k)+\partial g(x_{k+1}).
\]

We next use the first equation of \eqref{eq_one_system} to express $z_k$
in terms of $x_k$ and $x_{k-1}$.
The first equation of \eqref{eq_one_system} gives
\[
    z_{k+1}
    =
    x_k+
    \frac{\theta_{k+1}}{\sqrt{s}}
    (x_{k+1}-x_k).
\]
Applying the same relation at the preceding iteration yields
\[
    z_k
    =
    x_{k-1}
    +
    \frac{\theta_k}{\sqrt{s}}
    (x_k-x_{k-1})
    =
    x_k+
    \frac{\theta_k-\sqrt{s}}{\sqrt{s}}
    (x_k-x_{k-1}).
\]
Therefore,
\[
    z_k-y_k
    =
    \left(
        \frac{\theta_k-\sqrt{s}}{\sqrt{s}}-\beta_k
    \right)
    (x_k-x_{k-1}),
\]
and
\[
    \frac{z_{k+1}-z_k}{\sqrt{s}}
    =
    \frac{\theta_{k+1}}{s}(x_{k+1}-x_k)
    -
    \frac{\theta_k-\sqrt{s}}{s}
    (x_k-x_{k-1}).
\]
Substituting these identities into the second equation of
\eqref{eq_one_system} and multiplying by $s/\theta_{k+1}$ gives
\[
\begin{aligned}
    x_{k+1}-x_k
    -
    \frac{\theta_k-\sqrt{s}}{\theta_{k+1}}
    (x_k-x_{k-1})
    \in{}&
    -s\mu
    \left(
        \frac{\theta_k-\sqrt{s}}{\sqrt{s}}-\beta_k
    \right)
    (x_k-x_{k-1})\\
    &-
    s\bigl(\nabla f(y_k)+\partial g(x_{k+1})\bigr).
\end{aligned}
\]
To obtain the optimality condition associated with
\eqref{eq_one_prox}, the coefficient of $x_k-x_{k-1}$ must agree with the
definition of $y_k$. This requires
\[
    \beta_k
    =
    \frac{\theta_k-\sqrt{s}}{\theta_{k+1}}
    -
    s\mu
    \left(
        \frac{\theta_k-\sqrt{s}}{\sqrt{s}}-\beta_k
    \right).
\]
Since $1-s\mu>0$, solving for $\beta_k$ gives
\begin{equation}\label{eq_beta}
    \beta_k
    =
    \frac{
        (\theta_k-\sqrt{s})
        \bigl(1-\mu\sqrt{s}\theta_{k+1}\bigr)
    }{
        (1-s\mu)\theta_{k+1}
    }.
\end{equation}
The factor $1-\mu\sqrt{s}\theta_{k+1}$ encodes the discrete
strong-convexity correction, while the ratio involving $\theta_k$ and
$\theta_{k+1}$ determines the inertial extrapolation.

With the choice \eqref{eq_beta}, the discretization
\eqref{eq_one_system} reduces to the one-sequence accelerated
forward-backward scheme
\begin{equation}\label{eq_one_algorithm}
\left\{
\begin{aligned}
    \beta_k
    &=
    \frac{
        (\theta_k-\sqrt{s})
        \bigl(1-\mu\sqrt{s}\theta_{k+1}\bigr)
    }{
        (1-s\mu)\theta_{k+1}
    },\\[4pt]
    y_k
    &=
    x_k+\beta_k(x_k-x_{k-1}),\\[5pt]
    x_{k+1}
    &=
    \prox_{sg}\bigl(y_k-s\nabla f(y_k)\bigr).
\end{aligned}
\right.
\end{equation}
Thus, its implementation requires only the two most recent primal iterates.
The auxiliary state can be reconstructed, when needed for the analysis,
from
\[
    z_k
    =
    x_k+
    \frac{\theta_k-\sqrt{s}}{\sqrt{s}}
    (x_k-x_{k-1}).
\]

\paragraph{Two-sequence discretization.}

We next retain the auxiliary state and evaluate the implicit nonsmooth term
at $z_{k+1}$ rather than at $x_{k+1}$. This gives
\begin{equation}\label{eq_two_discrete}
\begin{aligned}
    \frac{x_{k+1}-x_k}{\sqrt{s}}
    &=
    \frac{1}{\theta_{k+1}}
    \bigl(z_{k+1}-x_k\bigr),\\
    \frac{z_{k+1}-z_k}{\sqrt{s}}
    &\in
    -\mu\theta_{k+1}(z_k-y_k)
    -\theta_{k+1}
    \bigl(\nabla f(y_k)+\partial g(z_{k+1})\bigr),
\end{aligned}
\end{equation}
where
\[
    y_k:=x_k+\beta_k(x_k-x_{k-1}),
\]
and $\beta_k$ is again given by \eqref{eq_beta}. Thus, the extrapolated
point and the strong-convexity correction are unchanged; only the point at
which the nonsmooth term is evaluated differs from the one-sequence
discretization.

The first equation of \eqref{eq_two_discrete} gives
\[
    x_{k+1}
    =
    \left(
        1-\frac{\sqrt{s}}{\theta_{k+1}}
    \right)x_k
    +
    \frac{\sqrt{s}}{\theta_{k+1}}z_{k+1}.
\]
Thus, $z_{k+1}$ is first computed through a proximal step, after which
$x_{k+1}$ is obtained through an affine coupling of $x_k$ and $z_{k+1}$.
The second equation of \eqref{eq_two_discrete} is equivalent to
\[
    z_{k+1}
    =
    \prox_{\sqrt{s}\theta_{k+1}g}
    \left(
        z_k
        -\mu\sqrt{s}\theta_{k+1}(z_k-y_k)
        -\sqrt{s}\theta_{k+1}\nabla f(y_k)
    \right).
\]
Consequently, the two-sequence scheme takes the form
\begin{equation}\label{eq_two_algorithm}
\left\{
\begin{aligned}
    \beta_k
    &=
    \frac{
        (\theta_k-\sqrt{s})
        \bigl(1-\mu\sqrt{s}\theta_{k+1}\bigr)
    }{
        (1-s\mu)\theta_{k+1}
    },\\[4pt]
    y_k
    &=
    x_k+\beta_k(x_k-x_{k-1}),\\[4pt]
    z_{k+1}
    &=
    \prox_{\sqrt{s}\theta_{k+1}g}
    \left(
        z_k
        -\mu\sqrt{s}\theta_{k+1}(z_k-y_k)
        -\sqrt{s}\theta_{k+1}\nabla f(y_k)
    \right),\\[4pt]
    x_{k+1}
    &=
    \left(
        1-\frac{\sqrt{s}}{\theta_{k+1}}
    \right)x_k
    +
    \frac{\sqrt{s}}{\theta_{k+1}}z_{k+1}.
\end{aligned}
\right.
\end{equation}
Using \eqref{eq_beta} and the first equation of
\eqref{eq_two_discrete}, the same coefficient matching as in the
one-sequence case gives
\[    \frac{y_k-x_{k+1}}{s}
    \in
    \nabla f(y_k)+\partial g(z_{k+1}).
\]
This relation is the two-sequence counterpart of the optimality condition
associated with \eqref{eq_one_prox}. The subgradient is evaluated at
$z_{k+1}$, whereas the displacement on the left-hand side is measured
between $y_k$ and $x_{k+1}$.

The two-sequence scheme can also be written without the displacement
$x_k-x_{k-1}$. Indeed, the relation
$
    z_k=x_k+\frac{\theta_k-\sqrt{s}}{\sqrt{s}}(x_k-x_{k-1})
$
and \eqref{eq_beta} give
\[
    y_k
    =
    \frac{\theta_{k+1}-\sqrt{s}}
         {\theta_{k+1}(1-s\mu)}x_k
    +
    \frac{
        \sqrt{s}
        \bigl(1-\mu\sqrt{s}\theta_{k+1}\bigr)
    }{
        \theta_{k+1}(1-s\mu)
    }z_k.
\]
Under Assumption~\ref{ass_discrete}, the two coefficients in the
representation of $y_k$ are nonnegative and sum to one. Hence, $y_k$ is
a convex combination of $x_k$ and $z_k$. A direct calculation also gives
\[
    z_k-\mu\sqrt{s}\theta_{k+1}(z_k-y_k)
    =
    \frac{\theta_{k+1}}{\sqrt{s}}y_k
    -
    \frac{\theta_{k+1}-\sqrt{s}}{\sqrt{s}}x_k.
\]
Substituting this identity into \eqref{eq_two_algorithm} yields the
equivalent affine representation
\begin{equation}\label{eq_two_affine}
\left\{
\begin{aligned}
    y_k
    &=
    \frac{\theta_{k+1}-\sqrt{s}}
         {\theta_{k+1}(1-s\mu)}x_k
    +
    \frac{
        \sqrt{s}
        \bigl(1-\mu\sqrt{s}\theta_{k+1}\bigr)
    }{
        \theta_{k+1}(1-s\mu)
    }z_k,
    \\[4pt]
    z_{k+1}
    &=
    \prox_{\sqrt{s}\theta_{k+1}g}
    \left(
        \frac{\theta_{k+1}}{\sqrt{s}}y_k
        -
        \frac{\theta_{k+1}-\sqrt{s}}{\sqrt{s}}x_k
        -
        \sqrt{s}\theta_{k+1}\nabla f(y_k)
    \right),
    \\[4pt]
    x_{k+1}
    &=
    \left(
        1-\frac{\sqrt{s}}{\theta_{k+1}}
    \right)x_k
    +
    \frac{\sqrt{s}}{\theta_{k+1}}z_{k+1}.
\end{aligned}
\right.
\end{equation}
This representation emphasizes the direct interaction among the primal
state $x_k$, the auxiliary state $z_k$, and the extrapolated point $y_k$.
It is also convenient for comparison with classical two-sequence
accelerated forward-backward methods.

The two discretizations use the same extrapolated point and the same
inertial coefficient. Their difference lies in the treatment of the
auxiliary state. The one-sequence formulation performs the proximal step at
$x_{k+1}$ and eliminates $z_k$ from the implementation, whereas the
two-sequence formulation updates $z_{k+1}$ proximally and then obtains
$x_{k+1}$ through an affine coupling. They therefore provide complementary
discrete realizations of the same composite two-state dynamic
\eqref{eq_discrete_dynamics}.

In the convex case $\mu=0$, setting
$
    \theta_k=\sqrt{s}\,t_k
$
gives
\[
    \beta_k=\frac{t_k-1}{t_{k+1}}.
\]
When $\{t_k\}$ is chosen according to a standard Nesterov acceleration \cite{Nesterov1983}, the
one-sequence formulation \eqref{eq_one_algorithm} reduces to the FISTA
accelerated forward-backward scheme
\cite{BeckFista,ChambolleJota}. Under the corresponding parametrization, the
affine representation \eqref{eq_two_affine} recovers the 
Tseng-type accelerated forward-backward scheme
\cite{Tseng2008,Nesterov12}. In the strongly convex case $\mu>0$, the constant choice
$
    \theta_k=1/\sqrt{\mu}
$
gives
\[
    \beta_k=\frac{1-\sqrt{s\mu}}{1+\sqrt{s\mu}}.
\]
The one- and two-sequence formulations then reduce, respectively, to the
first- and second-type strongly convex Nesterov accelerated schemes, in the
terminology of \cite{LinML}; see also \cite{Nesterov2004}. The present
framework extends these convex and strongly convex endpoint schemes by
allowing the inertial coefficient to transition between the corresponding
parameter regimes. A detailed comparison with existing accelerated methods
is given after the convergence rate analysis.

 \subsection{A unified energy estimate and convergence rates}\label{sec_32}

We next establish a unified energy estimate for the one- and two-sequence
schemes. Although their proximal optimality conditions evaluate
$\partial g$ at different points, both methods satisfy the same affine
coupling relation. These common relations allow the
two algorithms to be analyzed within a single framework.

Fix $x^*\in\operatorname{argmin}\Phi$ and set
$\Phi^*:=\Phi(x^*)$. For $k\ge1$, define
\begin{equation}\label{eq_tau}
    \tau_k:=\frac{\sqrt{s}}{\theta_{k+1}}.
\end{equation}
The first equation in either discretization can then be written as
\begin{equation}\label{eq_affine}
    x_{k+1}
    =
    (1-\tau_k)x_k+\tau_k z_{k+1}.
\end{equation}
The following assumption specifies the admissible discrete coefficients.

\begin{assumption}\label{ass_discrete}
For every $k\ge1$, suppose that
\begin{equation}\label{eq_theta}
    \theta_k\ge\sqrt{s},
    \qquad
    \mu\theta_k^2\le1,
\end{equation}
and
\begin{equation}\label{eq_budget}
    \bigl(1-\sqrt{s}\mu\theta_{k+1}\bigr)\theta_k^2
    \ge
    \left(
        1-\frac{\sqrt{s}}{\theta_{k+1}}
    \right)\theta_{k+1}^2.
\end{equation}
\end{assumption}

The first condition in \eqref{eq_theta} ensures that
$
    0<\tau_k\le1,
$
so \eqref{eq_affine} expresses $x_{k+1}$ as a convex combination of
$x_k$ and $z_{k+1}$. Moreover, since $s\mu<1$, the conditions in
\eqref{eq_theta} imply
\begin{equation}\label{eq_musth}
    0
    \le
    \sqrt{s}\mu\theta_k
    =
    \sqrt{s\mu}\,\sqrt{\mu}\theta_k
    \le
    \sqrt{s\mu}
    <1.
\end{equation}
Hence, every factor $1-\sqrt{s}\mu\theta_k$ is strictly positive.

For $k\ge1$, define the discrete weights
\begin{equation}\label{eq_scales}
\begin{aligned}
    A_k
    &:=
    \theta_k^2
    \prod_{i=1}^k
    \frac{1}{1-\sqrt{s}\mu\theta_i},
    \\[3pt]
    M_k
    &:=
    \frac{A_k}{\theta_k^2}
    =
    \prod_{i=1}^k
    \frac{1}{1-\sqrt{s}\mu\theta_i}.
\end{aligned}
\end{equation}
Both sequences are therefore well defined and positive. The factor
$\theta_k^2$ produces the accelerated convex contribution, while the
product in $M_k$ accumulates the contribution of strong convexity. The definition of $A_k$ gives
\[
    \frac{A_k}{A_{k+1}}
    =
    \frac{
        \bigl(1-\sqrt{s}\mu\theta_{k+1}\bigr)\theta_k^2
    }{
        \theta_{k+1}^2
    }.
\]
Consequently, \eqref{eq_budget} is equivalent to
\[
    \frac{A_k}{A_{k+1}}
    \ge
    1-\frac{\sqrt{s}}{\theta_{k+1}}
    =
    1-\tau_k,
\]
or, equivalently,
\begin{equation}\label{eq_scale_budget}
    A_k\ge(1-\tau_k)A_{k+1}.
\end{equation}
Thus, \eqref{eq_budget} controls the admissible growth of the weight
$A_k$.

For either \eqref{eq_one_algorithm} or \eqref{eq_two_algorithm}, let
$(x_k,y_k,z_k)$ denote the associated iterates. In the one-sequence
formulation, the auxiliary state is reconstructed through
\[
    z_{k+1}
    =
    x_k+
    \frac{\theta_{k+1}}{\sqrt{s}}
    (x_{k+1}-x_k),
\]
whereas in the two-sequence formulation it is updated explicitly. Define
the discrete energy
\begin{equation}\label{eq_energy}
    \mathcal E_k
    :=
    A_k\bigl(\Phi(x_k)-\Phi^*\bigr)
    +
    \frac{M_k}{2}\norm{z_k-x^*}^2.
\end{equation}

The common affine coupling  will be used to
show that this energy decreases along the iterates of both discretizations,
leading directly to a unified objective-residual estimate.

\begin{theorem}\label{thm_dis}
Suppose that Assumption \ref{ass_discrete} holds. Then, for the iterates generated by
either \eqref{eq_one_algorithm} or \eqref{eq_two_algorithm}, the energy
sequence defined in \eqref{eq_energy} is nonincreasing.
Consequently,
\[
    \Phi(x_k)-\Phi^*
    \le    \frac{\mathcal E_1}{A_k},
    \qquad
    k\ge1,
\]
where $\mathcal E_1= A_1\bigl(\Phi(x_1)-\Phi^*\bigr)
    +
    \frac{M_1}{2}\norm{z_1-x^*}^2$.
\end{theorem}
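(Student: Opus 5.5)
The plan is to establish the one-step inequality $\mathcal E_{k+1}\le\mathcal E_k$ for every $k\ge1$ by a standard composite descent argument applied at the extrapolated point $y_k$. The final bound then follows from $A_k(\Phi(x_k)-\Phi^*)\le\mathcal E_k\le\mathcal E_1$.

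Both schemes satisfy the affine coupling \eqref{eq_affine} and an inclusion of the form $\frac{y_k-x_{k+1}}{s}\in\nabla f(y_k)+\partial g(w_{k+1})$. Here $w_{k+1}=x_{k+1}$ for \eqref{eq_one_algorithm} and $w_{k+1}=z_{k+1}$ for \eqref{eq_two_algorithm}. I would also rewrite the $z$-update in the uniform form
\[
z_{k+1}-z_k=-\mu\sqrt{s}\theta_{k+1}(z_k-y_k)-\frac{\theta_{k+1}}{\sqrt s}(y_k-x_{k+1}),
\]
which holds in both cases by the coefficient matching used to derive \eqref{eq_beta}.

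\textbf{Step 1 (one-sequence case).} Set $G:=(y_k-x_{k+1})/s$. The $L$-smoothness of $f$ (with $s\le1/L$), the $\mu$-strong convexity of $f$ and the convexity of $g$ give the standard inequality, valid for every $u\in\operatorname{dom}g$:
\[
\Phi(x_{k+1})\le\Phi(u)+\ip{G}{y_k-u}-\frac s2\norm{G}^2-\frac\mu2\norm{u-y_k}^2 .
\]
I apply it with $u=x_k$, weighted by $1-\tau_k$, and with $u=x^*$, weighted by $\tau_k$. I then multiply by $A_{k+1}$ and use \eqref{eq_scale_budget} to bound $(1-\tau_k)A_{k+1}(\Phi(x_k)-\Phi^*)\le A_k(\Phi(x_k)-\Phi^*)$; this uses $\Phi(x_k)\ge\Phi^*$. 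The convexity of $\|\cdot\|^2$ merges the two $\mu$-terms. Using \eqref{eq_affine}, the linear term becomes $\tau_k\ip{G}{(y_k-(1-\tau_k)x_k)/\tau_k-x^*}$.

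\textbf{Step 2 (matching with the distance term).} I expand
\[
\frac{M_{k+1}}2\norm{z_{k+1}-x^*}^2-\frac{M_k}2\norm{z_k-x^*}^2
\]
using the $z$-update. The key relations are $A_{k+1}\tau_k=M_{k+1}\theta_{k+1}\sqrt s$, from $A_{k+1}=M_{k+1}\theta_{k+1}^2$, and $M_k=(1-\sqrt s\mu\theta_{k+1})M_{k+1}$. The factor $1-\sqrt s\mu\theta_{k+1}$ plays the role of the strong-convexity contraction: $z_k-\mu\sqrt s\theta_{k+1}(z_k-y_k)$ is a convex combination of $z_k$ and $y_k$ by \eqref{eq_musth}, so its squared distance to $x^*$ is at most $(1-\sqrt s\mu\theta_{k+1})\norm{z_k-x^*}^2+\sqrt s\mu\theta_{k+1}\norm{y_k-x^*}^2$. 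The surplus $\sqrt s\mu\theta_{k+1}\norm{y_k-x^*}^2$ must be absorbed by the strong-convexity term $-\tfrac{\mu}{2}A_{k+1}\tau_k\norm{x^*-y_k}^2$ from Step 1, and these match exactly because $A_{k+1}\tau_k\mu=M_{k+1}\sqrt s\mu\theta_{k+1}$.

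The remaining terms are the cross term $\ip{G}{\cdot}$ and the quadratic term $\norm{G}^2$. I would show that they combine into $-\tfrac s2A_{k+1}\norm{G}^2+\tfrac{M_{k+1}\theta_{k+1}^2 s}{2}\norm{G}^2=0$, after identifying the anchor point $\frac{y_k-(1-\tau_k)x_k}{\tau_k}$ with the intermediate point $z_k-\mu\sqrt s\theta_{k+1}(z_k-y_k)$. This is precisely the affine identity displayed before \eqref{eq_two_affine}.

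\textbf{Step 3 (two-sequence case).} Here $\partial g$ is evaluated at $z_{k+1}$, so I would instead apply the descent inequality for $f$ at $y_k$ and convexity of $g$ along $x_{k+1}=(1-\tau_k)x_k+\tau_kz_{k+1}$. The three-point inequality for the prox step defining $z_{k+1}$, with step $\sqrt s\theta_{k+1}$, then replaces the identity of Step 2. Smoothness enters via $\norm{x_{k+1}-y_k}^2$ with $x_{k+1}-y_k=\tau_k(z_{k+1}-\hat z_k)$, where $\hat z_k$ is the intermediate point above. The condition $s\le1/L$ together with $\tau_k^2/s=1/\theta_{k+1}^2$ makes the $\frac{L}{2}\norm{x_{k+1}-y_k}^2$ term cancel against the $-\frac{1}{2\sqrt s\theta_{k+1}}\norm{z_{k+1}-\hat z_k}^2$ term of the prox inequality, once everything is weighted by $A_{k+1}$.

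I expect the main obstacle to be the bookkeeping in Step 2/3: verifying that the cross terms cancel exactly through the affine identity for $y_k$, and getting the weights $A_{k+1}\tau_k$ and $M_{k+1}\sqrt s\theta_{k+1}$ consistent. I would carry this out first for the two-sequence version, since there the prox inequality makes the structure transparent, and then adapt it to the one-sequence version.
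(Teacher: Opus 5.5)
Your proposal is correct and is essentially the paper's proof. It uses the same energy, the same $(1-\tau_k,\tau_k)$-weighting of the comparison points $x_k,x^*$, the identities $\tau_k^2A_{k+1}=sM_{k+1}$ and $M_k=(1-\sqrt{s}\mu\theta_{k+1})M_{k+1}$, the convexity bound on the intermediate point $\hat z_k$ (the paper's $w_k$) with its surplus $\norm{y_k-x^*}^2$ cancelled by strong convexity, and \eqref{eq_scale_budget} at the end. The only differences are bookkeeping: the paper confines the case split to the single bound \eqref{eq_estg} for $g$ and then uses the three-point identity around $z_{k+1}$, whereas you anchor the cross term at $\hat z_k$ and use the three-point prox inequality for the two-sequence scheme.

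One slip to fix: in Step~1 do not merge the two $\mu$-terms by convexity of $\norm{\cdot}^2$. That replaces $\tau_k\norm{y_k-x^*}^2$ by $\tau_k^2\norm{\hat z_k-x^*}^2$ and removes exactly the term you need in Step~2. Instead, keep $-\frac{\mu}{2}\tau_k\norm{y_k-x^*}^2$ and simply discard the nonpositive term $-\frac{\mu}{2}(1-\tau_k)\norm{y_k-x_k}^2$.
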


\begin{proof}
Define the common residual
\begin{equation}\label{eq_residual}
    p_{k+1}
    :=
    \frac{y_k-x_{k+1}}{s}.
\end{equation}
For the one-sequence algorithm \eqref{eq_one_algorithm}, the proximal
optimality condition gives
\[
    p_{k+1}
    \in
    \nabla f(y_k)+\partial g(x_{k+1}),
\]
whereas the two-sequence algorithm \eqref{eq_two_algorithm} satisfies
\[
    p_{k+1}
    \in
    \nabla f(y_k)+\partial g(z_{k+1}).
\]
Thus, the two algorithms differ only in the point at which the nonsmooth
subgradient is evaluated.

Using
\eqref{eq_residual} and the corresponding proximal optimality condition,
the second equation of either \eqref{eq_one_system} or
\eqref{eq_two_discrete} can be written as
\[
    \frac{z_{k+1}-z_k}{\sqrt{s}}
    =
    -\mu\theta_{k+1}(z_k-y_k)
    -\theta_{k+1}p_{k+1}.
\]
Therefore,
\begin{equation}\label{eq_z_recursion}
\begin{aligned}
    z_{k+1}
    &=
    z_k
    -\sqrt{s}\mu\theta_{k+1}(z_k-y_k)
    -\sqrt{s}\theta_{k+1}p_{k+1}\\
    &=
    \bigl(1-\sqrt{s}\mu\theta_{k+1}\bigr)z_k
    +
    \sqrt{s}\mu\theta_{k+1}y_k
    -
    \frac{s}{\tau_k}p_{k+1},
\end{aligned}
\end{equation}
where the last equality follows from \eqref{eq_tau}.

\emph{Objective estimate.}
Since $s\le1/L$, the $L$-smoothness of $f$ and
\eqref{eq_residual} give
\[
\begin{aligned}
    f(x_{k+1})
    &\le
    f(y_k)
    +
    \ip{\nabla f(y_k)}{x_{k+1}-y_k}
    +
    \frac{L}{2}\norm{x_{k+1}-y_k}^2\\
    &\le
    f(y_k)
    +
    \ip{\nabla f(y_k)}{x_{k+1}-y_k}
    +
    \frac{s}{2}\norm{p_{k+1}}^2.
\end{aligned}
\]
By the $\mu$-strong convexity of $f$ and $0<\tau_k\le1$,
\[
\begin{aligned}
    (1-\tau_k)f(x_k)+\tau_kf(x^*)= {}& f(y_k)+ (1-\tau_k)(f(x_k)-f(y_k))+ \tau_k(f(x^*)-f(y_k)) \\
    \ge{}&
    f(y_k)
    +
    \ip{\nabla f(y_k)}
    {(1-\tau_k)x_k+\tau_kx^*-y_k}\\
    &+
    \frac{\mu}{2}
    \left[
        (1-\tau_k)\norm{y_k-x_k}^2
        +
        \tau_k\norm{y_k-x^*}^2
    \right].
\end{aligned}
\]
Combining these inequalities and using \eqref{eq_affine}, we obtain
\begin{equation}\label{eq_estf}
\begin{aligned}
    &f(x_{k+1})
    -(1-\tau_k)f(x_k)
    -\tau_k f(x^*)
    \\[2pt]
    &\quad\le
    \tau_k
    \ip{\nabla f(y_k)}{z_{k+1}-x^*}
    +
    \frac{s}{2}\norm{p_{k+1}}^2-
    \frac{\mu}{2}
    \left[
        (1-\tau_k)\norm{y_k-x_k}^2
        +
        \tau_k\norm{y_k-x^*}^2
    \right].
\end{aligned}	
\end{equation}

We next estimate the nonsmooth term. For the one-sequence algorithm,
\[
    p_{k+1}-\nabla f(y_k)\in\partial g(x_{k+1}).
\]
Applying the subgradient inequality with comparison points $x_k$ and
$x^*$, taking the weighted sum with coefficients $1-\tau_k$ and
$\tau_k$, and using \eqref{eq_affine}, we obtain
\[
\begin{aligned}
  g(x_{k+1})
    -(1-\tau_k)g(x_k)
    -\tau_k g(x^*)  &= -(1-\tau_k)(g(x_k)-g(x_{k+1}))
    -\tau_k (g(x^*)-g(x_{k+1}))\\
    &\quad\le
    \ip{p_{k+1}-\nabla f(y_k)}
       {x_{k+1}-(1-\tau_k)x_k-\tau_k x^*}\\
    &\quad=
    \tau_k
    \ip{p_{k+1}-\nabla f(y_k)}
       {z_{k+1}-x^*}.
\end{aligned}
\]

For the two-sequence algorithm,
\[
    p_{k+1}-\nabla f(y_k)\in\partial g(z_{k+1}).
\]
Since $0<\tau_k\le1$, the convexity of $g$ and
\eqref{eq_affine} yield
\[
\begin{aligned}
    g(x_{k+1})
    &\le
    (1-\tau_k)g(x_k)+\tau_k g(z_{k+1})\\
    &\le
    (1-\tau_k)g(x_k)
    +
    \tau_k
    \left(
        g(x^*)
        +
        \ip{p_{k+1}-\nabla f(y_k)}
           {z_{k+1}-x^*}
    \right).
\end{aligned}
\]
Thus, both algorithms \eqref{eq_one_algorithm} and \eqref{eq_two_algorithm} satisfy
\begin{equation}\label{eq_estg}
    g(x_{k+1})
    -(1-\tau_k)g(x_k)
    -\tau_k g(x^*)
    \le
    \tau_k
    \ip{p_{k+1}-\nabla f(y_k)}
       {z_{k+1}-x^*}.
\end{equation}

Adding \eqref{eq_estf} and \eqref{eq_estg} gives
\begin{equation}\label{eq_objective}
\begin{aligned}
    \Phi(x_{k+1})
    -(1-\tau_k)\Phi(x_k)
    -\tau_k\Phi^*
    &\le
    \tau_k
    \ip{p_{k+1}}{z_{k+1}-x^*}
    +
    \frac{s}{2}\norm{p_{k+1}}^2\\
    &\quad
    -
    \frac{\mu}{2}
    \left[
        (1-\tau_k)\norm{y_k-x_k}^2
        +
        \tau_k\norm{y_k-x^*}^2
    \right].
\end{aligned}
\end{equation}

\emph{Estimate of the mixed term.}
Set
\begin{equation}\label{eq_w}
    w_k
    :=
    \bigl(1-\sqrt{s}\mu\theta_{k+1}\bigr)z_k
    +
    \sqrt{s}\mu\theta_{k+1}y_k.
\end{equation}
It follows from \eqref{eq_z_recursion} that
\begin{equation}\label{eq_w_relation}
    w_k-z_{k+1}
    =
    \frac{s}{\tau_k}p_{k+1}.
\end{equation}
Furthermore, \eqref{eq_tau} and \eqref{eq_scales} give
\begin{equation}\label{eq_scale_identity}
    \tau_k^2A_{k+1}=sM_{k+1}.
\end{equation}
Consequently, from \eqref{eq_w_relation},
\[
    \tau_kA_{k+1}
    \ip{p_{k+1}}{z_{k+1}-x^*}
    =
    M_{k+1}
    \ip{w_k-z_{k+1}}{z_{k+1}-x^*}.
\]
Using the identity
\[
    \ip{w_k-z_{k+1}}{z_{k+1}-x^*}
    =
    \frac12\norm{w_k-x^*}^2
    -
    \frac12\norm{z_{k+1}-x^*}^2
    -
    \frac12\norm{w_k-z_{k+1}}^2,
\]
we estimate each term on the right-hand side.

By \eqref{eq_musth},
$
    0\le\sqrt{s}\mu\theta_{k+1}<1.
$
It follows from \eqref{eq_w} and the
 convexity of $\|\cdot\|^2$ that
\[
    \norm{w_k-x^*}^2
    \le
    \bigl(1-\sqrt{s}\mu\theta_{k+1}\bigr)
    \norm{z_k-x^*}^2
    +
    \sqrt{s}\mu\theta_{k+1}
    \norm{y_k-x^*}^2.
\]
The product definition of $M_k$ also gives
\[
    M_k
    =
    \bigl(1-\sqrt{s}\mu\theta_{k+1}\bigr)M_{k+1},
\]
while \eqref{eq_w_relation} and \eqref{eq_scale_identity} imply
\[
    \frac{M_{k+1}}{2}
    \norm{w_k-z_{k+1}}^2
    =
    \frac{sA_{k+1}}{2}
    \norm{p_{k+1}}^2.
\]
Combining these relations yields
\begin{equation}\label{eq_mixed}
\begin{aligned}
  \tau_kA_{k+1}
    \ip{p_{k+1}}{z_{k+1}-x^*}  &\le
    \frac{M_k}{2}\norm{z_k-x^*}^2
    -
    \frac{M_{k+1}}{2}\norm{z_{k+1}-x^*}^2\\
    &\qquad
    +
    \frac{\sqrt{s}\mu\theta_{k+1}M_{k+1}}{2}
    \norm{y_k-x^*}^2
    -
    \frac{sA_{k+1}}{2}\norm{p_{k+1}}^2.
\end{aligned}
\end{equation}

\emph{Energy descent.}
Multiplying \eqref{eq_objective} by $A_{k+1}$ and applying
\eqref{eq_mixed}, the terms involving
$\norm{p_{k+1}}^2$ cancel. Moreover,
\[
    \sqrt{s}\mu\theta_{k+1}M_{k+1}
    =
    \mu\tau_kA_{k+1},
\]
so the terms involving $\norm{y_k-x^*}^2$ cancel as well. We obtain
\[
\begin{aligned}
A_{k+1}\bigl(\Phi(x_{k+1})-\Phi^*\bigr)
    +
    \frac{M_{k+1}}{2}\norm{z_{k+1}-x^*}^2
    &\le
    (1-\tau_k)A_{k+1}
    \bigl(\Phi(x_k)-\Phi^*\bigr)
    +
    \frac{M_k}{2}\norm{z_k-x^*}^2\\
    &\quad
    -
    \frac{\mu}{2}
    (1-\tau_k)A_{k+1}
    \norm{y_k-x_k}^2.
\end{aligned}
\]
Therefore,
\[
    \mathcal E_{k+1}-\mathcal E_k
    \le
    \bigl((1-\tau_k)A_{k+1}-A_k\bigr)
    \bigl(\Phi(x_k)-\Phi^*\bigr)
    -
    \frac{\mu}{2}
    (1-\tau_k)A_{k+1}
    \norm{y_k-x_k}^2
    \le0,
\]
where the last inequality follows from
\eqref{eq_scale_budget}, $0<\tau_k\le1$, and
$\Phi(x_k)-\Phi^*\ge0$. Hence, $\{\mathcal E_k\}$ is nonnegative and
nonincreasing. It follows that
\[
    A_k\bigl(\Phi(x_k)-\Phi^*\bigr)
    \le
    \mathcal E_k
    \le
    \mathcal E_1,
\]
which proves the result.
\end{proof}

Theorem~\ref{thm_dis} gives the same energy estimate for the one- and
two-sequence schemes, despite the different points at which their
nonsmooth subgradients are evaluated. When $\mu=0$, one has
$
    M_k=1
$
and
$
    A_k=\theta_k^2,
$
which recovers the usual weight associated with convex acceleration. When
$\mu>0$, the product in \eqref{eq_scales} incorporates the contribution of
strong convexity and can produce geometric growth of $A_k$ for suitable
coefficient choices.

 \paragraph{Relation to the continuous-time coefficient condition.}

The discrete condition \eqref{eq_budget} mirrors the upper coefficient
condition in the continuous-time framework. Indeed, it is equivalent to
\[
    \frac{\theta_{k+1}^2-\theta_k^2}
         {\sqrt{s}\,\theta_{k+1}}
    +
    \mu\theta_k^2
    \le
    1,
\]
or, equivalently,
\begin{equation}\label{eq_eqvi}
    \frac{\theta_{k+1}-\theta_k}{\sqrt{s}}
    \frac{\theta_{k+1}+\theta_k}{\theta_{k+1}}
    +
    \mu\theta_k^2
    \le
    1.
\end{equation}
Thus the first term is a discrete counterpart of $2\dot\theta(t)$, while the
second term corresponds to $\mu\theta(t)^2$.  Let
$
    r_k=t_0+k\sqrt{s},
$
and consider $r_k$ in a fixed finite interval $[t_0,T]$. Suppose that
$\theta_k=\theta(r_k)$. Then, as $s\to0$,
\[
    \frac{\theta_{k+1}-\theta_k}{\sqrt{s}}
    =\dot{\theta}(r_k)+O(\sqrt{s}),
\qquad
    \frac{\theta_{k+1}+\theta_k}{\theta_{k+1}}
    =2+O(\sqrt{s}),
\]
so that the discrete condition \eqref{eq_budget} formally converges to
\[
    2\dot{\theta}(t)+\mu\theta(t)^2\le1.
\]

Moreover,
\[
\begin{aligned}
    \ln A_k
    &=
    2\ln\theta_k
    -\sum_{i=1}^k
    \ln\bigl(1-\sqrt{s}\mu\theta_i\bigr) \\
    &=
    2\ln\theta_k
    +
    \sum_{i=1}^k\sqrt{s}\mu\theta_i
    +
    O(ks).
\end{aligned}
\]
Since $r_k\le T$, we have $k\sqrt{s}=O(1)$ and hence
$
    O(ks)=O(\sqrt{s}).
$
Therefore, by the Riemann-sum approximation,
\[
    \ln A_k
    =
    2\ln\theta(r_k)
    +
    \int_{t_0}^{r_k}\mu\theta(r)\,\dd r
    +
    O(\sqrt{s}),
\]
which shows that the discrete weight $A_k$ is consistent with the
continuous weight
\[
    A(t)
    =
    \theta(t)^2
    e^{\int_{t_0}^t\mu\theta(r)\,\dd r}.
\]
  This correspondence explains why the
continuous and discrete objective-residual bounds take the parallel forms
\[
    \Phi(x(t))-\Phi^*
    =
    \bigO\left(\frac{1}{A(t)}\right)
\]
and
\[
    \Phi(x_k)-\Phi^*
    =
    \bigO\left(\frac{1}{A_k}\right).
\]
Specific admissible choices of $\{\theta_k\}$ and their corresponding
convergence rates are discussed in the next subsection.

\subsection{Classical schemes and discrete transition coefficients}\label{subsec_parameters}

The coefficient sequence ${\theta_k}$ determines both accelerated
forward-backward schemes through the inertial coefficient
\eqref{eq_beta}. We first show how the classical convex and strongly convex
schemes, together with the known equality-branch transition, arise from the
 discrete coefficient condition. We then introduce sampled transition
coefficients that provide additional flexibility in the passage between the
two endpoint regimes.

 \paragraph{Classical convex and strongly convex schemes and their
equality-branch transition.}

For convenience, set
\[
    t_k:=\frac{\theta_k}{\sqrt{s}}.
\]
The discrete coefficient condition \eqref{eq_budget} is equivalent to
\begin{equation}\label{eq_t_inequality}
    t_{k+1}^2
    +
    \bigl(s\mu t_k^2-1\bigr)t_{k+1}
    -
    t_k^2
    \le0.
\end{equation}
Thus, the classical convex and strongly convex coefficients, as well as
the equality-branch transition \eqref{eq_canonical_cp} between them, are governed by the same
scalar inequality.

Suppose first that $\mu=0$ and $t_1=1$. Then
\eqref{eq_t_inequality} reduces to
\begin{equation}\label{eq_convex_budget}
    t_{k+1}^2
    \le
    t_k^2+t_{k+1}.
\end{equation}
Moreover,
$A_k=st_k^2$ and
$\beta_k=\frac{t_k-1}{t_{k+1}}$.
Consequently, the one-sequence scheme \eqref{eq_one_algorithm} becomes
\begin{equation}\label{eq_convex_one}
\left\{
\begin{aligned}
    y_k
    &=
    x_k+
    \frac{t_k-1}{t_{k+1}}
    (x_k-x_{k-1}),
    \\[4pt]
    x_{k+1}
    &=
    \prox_{sg}
    \bigl(y_k-s\nabla f(y_k)\bigr),
\end{aligned}
\right.
\end{equation}
whereas the two-sequence formulation \eqref{eq_two_affine} becomes
\begin{equation}\label{eq_convex_two}
\left\{
\begin{aligned}
    y_k
    &=
    \left(
        1-\frac{1}{t_{k+1}}
    \right)x_k
    +
    \frac{1}{t_{k+1}}z_k,
    \\[4pt]
    z_{k+1}
    &=
    \prox_{st_{k+1}g}
    \left(
        z_k-st_{k+1}\nabla f(y_k)
    \right),
    \\[4pt]
    x_{k+1}
    &=
    \left(
        1-\frac{1}{t_{k+1}}
    \right)x_k
    +
    \frac{1}{t_{k+1}}z_{k+1}.
\end{aligned}
\right.
\end{equation}
Hence, \eqref{eq_convex_budget} provides a common admissibility condition
for the one- and two-sequence accelerated forward-backward schemes.
Imposing equality in \eqref{eq_convex_budget} gives
$
    t_{k+1}^2=t_k^2+t_{k+1},
$
or, equivalently,
$
    t_{k+1}
    =
    \frac{1+\sqrt{1+4t_k^2}}{2}.
$
This is the classical Nesterov parameter update
\cite{Nesterov1983}. Under this choice, \eqref{eq_convex_one} reduces to
FISTA \cite{BeckFista}, while \eqref{eq_convex_two} reduces to the
 Tseng-type accelerated forward-backward scheme
\cite{Tseng2008}. Since
$
    t_k\ge(k+1)/2,
$
Theorem \ref{thm_dis} gives
$
    \Phi(x_k)-\Phi^*
    \le
    \frac{4\mathcal E_1}{s(k+1)^2}.
$
Thus, the framework recovers both classical convex schemes and their
$\bigO(k^{-2})$ convergence rate. Condition \eqref{eq_convex_budget} also admits the linearly growing
sequences
\[
    t_k
    =
    \frac{k+\gamma-2}{\gamma-1},
    \qquad
    \beta_k
    =
    \frac{k-1}{k+\gamma-1},
    \qquad
    \gamma\ge3,
\]
as well as broader classes of admissible inertial sequences studied in
\cite{SuJMLR2016,AttouchMP18,BotArxiv,AttouchC18}.

Suppose next that $\mu>0$. The equality associated with
\eqref{eq_t_inequality} is
\begin{equation}\label{eq_t_relation}
    t_{k+1}^2
    +
    \bigl(s\mu t_k^2-1\bigr)t_{k+1}
    -
    t_k^2
    =
    0.
\end{equation}
A positive fixed point $\bar t$ of this relation satisfies
$
    \bar t\bigl(s\mu\bar t^2-1\bigr)=0.
$
Hence, the unique positive fixed point is
$
    \bar t
    =
    \frac{1}{\sqrt{s\mu}}.
$
The corresponding constant sequence gives
$
    \theta_k
    =
    \sqrt{s}\,\bar t
    =
    \frac{1}{\sqrt{\mu}},
$
and therefore
\[
    A_k
    =
    \frac{1}{\mu}
    \bigl(1-\sqrt{s\mu}\bigr)^{-k},
    \qquad
    \beta_k
    =
    \frac{1-\sqrt{s\mu}}{1+\sqrt{s\mu}}.
\]
It follows from Theorem \ref{thm_dis} that
$
    \Phi(x_k)-\Phi^*
    \le
    \mu\mathcal E_1
    \bigl(1-\sqrt{s\mu}\bigr)^k.
$
For $s=1/L$, the one- and two-sequence formulations reduce,
respectively, to the first- and second-type Nesterov accelerated schemes
for strongly convex composite optimization, in the terminology of
\cite[Sections~2.2.1--2.2.2]{LinML}. In particular,
\[
    \Phi(x_k)-\Phi^*
    =
    \bigO\left(
        \left(
            1-\sqrt{\frac{\mu}{L}}
        \right)^k
    \right).
\]

The same equality relation also yields the known transition between the
convex and strongly convex coefficients. Starting from $t_1=1$ and
selecting the positive root of \eqref{eq_t_relation} at every iteration
gives
\begin{equation}\label{eq_t_explicit}
    t_{k+1}
    =
    \frac{
        1-s\mu t_k^2
        +
        \sqrt{
            \bigl(1-s\mu t_k^2\bigr)^2+4t_k^2
        }
    }{2}.
\end{equation}
The corresponding inertial coefficient is
\begin{equation}\label{eq_beta_transition}
    \beta_k
    =
    \frac{
        (t_k-1)\bigl(1-s\mu t_{k+1}\bigr)
    }{
        (1-s\mu)t_{k+1}
    }.
\end{equation}
This parameter update appears in \cite{ChambolleActa}, and the
one-sequence scheme \eqref{eq_one_algorithm} with
\eqref{eq_beta_transition} coincides with
\cite[Algorithm~5]{ChambolleActa}. For every fixed $k$, letting $\mu\downarrow0$ in
\eqref{eq_t_explicit} recovers the classical Nesterov-FISTA parameter
update \cite{Nesterov1983,BeckFista}. The resulting inertial coefficient is
asymptotically equivalent, up to an index shift, to the critical Nesterov
coefficient
$
    \frac{k-1}{k+2}.
$
Conversely, for every fixed $\mu>0$, the sequence generated by
\eqref{eq_t_explicit} is nondecreasing and bounded above by
$1/\sqrt{s\mu}$, and hence it converges. Passing to the limit in
\eqref{eq_t_relation} then gives
$
    t_k\to\frac{1}{\sqrt{s\mu}},
$
 and hence
\eqref{eq_beta_transition} gives
$
    \beta_k
    \to
    \frac{1-\sqrt{s\mu}}{1+\sqrt{s\mu}}$
as $k\to+\infty$;
see \cite{ChambolleActa}. Thus, the equality branch connects the classical
convex and strongly convex inertial coefficients. For a prescribed $t_k$, the admissibility condition
\eqref{eq_t_inequality} is equivalent to
\[
    0<t_{k+1}
    \le
    \frac{
        1-s\mu t_k^2
        +
        \sqrt{
            \bigl(1-s\mu t_k^2\bigr)^2+4t_k^2
        }
    }{2}.
\]
Hence, \eqref{eq_t_explicit} selects the largest admissible value of
$t_{k+1}$ at each iteration. Starting from $t_1=1$, the equality branch selects the largest admissible
value of $t_{k+1}$ at each iteration, thereby giving the fastest stepwise
growth toward the strongly convex fixed point. Under the parametrization above, the weight $A_k$ satisfies
\[
    A_k
    =
    \frac{s}{
        {1}/{t_k^2}-s\mu
    }.
\]
The estimates established in the proof of
\cite[Theorem~4.10]{ChambolleActa} give
\[
    \frac{1}{t_k^2}-s\mu
    \le
    \min\left\{
        \frac{4}{(k+1)^2},
        \bigl(1+\sqrt{s\mu}\bigr)
        \bigl(1-\sqrt{s\mu}\bigr)^k
    \right\}.
\]
Consequently, Theorem \ref{thm_dis} yields
\begin{equation}\label{eq_transition_rate}
    \Phi(x_k)-\Phi^*
    \le
    \frac{\mathcal E_1}{s}
    \min\left\{
        \frac{4}{(k+1)^2},
        \bigl(1+\sqrt{s\mu}\bigr)
        \bigl(1-\sqrt{s\mu}\bigr)^k
    \right\},
    \qquad
    k\ge1.
\end{equation}
Under the parameter rule
\eqref{eq_t_explicit}--\eqref{eq_beta_transition}, the one-sequence scheme
\eqref{eq_one_algorithm} coincides with
\cite[Algorithm~5]{ChambolleActa}. The estimate
\eqref{eq_transition_rate}, derived here from Theorem \ref{thm_dis}, matches the
convergence result in \cite[Theorem~4.10]{ChambolleActa}.

As shown in Section \ref{sec23} , the discrete coefficient condition
\eqref{eq_t_inequality} is the counterpart of the continuous condition
\[
    2\dot\theta(t)+\mu\theta(t)^2\le1.
\]
Accordingly, the equality rule \eqref{eq_t_explicit} considered in
\cite{ChambolleActa} corresponds to imposing equality in the continuous
coefficient condition:
\[
    2\dot\theta(t)+\mu\theta(t)^2=1.
\]
As discussed in Section~\ref{sec:dyna}, the latter generates the
hyperbolic coefficient
\[
    \theta_\mu(t)
    =
    \frac{1}{\sqrt{\mu}}
    \tanh\left(\frac{\sqrt{\mu}t}{2}\right).
\]
In both settings, imposing equality selects the largest admissible
coefficient growth and therefore the fastest admissible transition from
the convex regime to the strongly convex regime. Thus, the present
framework not only includes \cite[Algorithm~5]{ChambolleActa} and yields
the matching convergence estimate, but also provides a continuous-time
dynamical interpretation of its equality-based parameter rule.

\paragraph{A sampling principle for discrete transition coefficients.}

The equality branch gives an explicit parameter rule and a convergence
estimate combining $\bigO(k^{-2})$ and $\bigO\left(\bigl(1-\sqrt{s\mu}\bigr)^k\right) $ decay. Once $t_1=1$ is
fixed, however, equality in \eqref{eq_budget} determines the entire
parameter sequence. Its convex limit reduces to the standard Nesterov rule \cite{Nesterov1983},
but does not directly include the broader family
\[
    \frac{k-1}{k+\gamma-1},
    \qquad
    \gamma\ge3.
\]
Moreover, the equality branch contains no additional parameter for
controlling the transition from the convex coefficients to the limiting
strongly convex coefficient.

To introduce this flexibility, we construct discrete coefficient sequences
by sampling the transition functions used in the continuous analysis. The
resulting sequences satisfy the discrete coefficient condition, converge to
the above convex family as $\mu\downarrow0$, and approach the classical
strongly convex coefficient as $k\to+\infty$.

\begin{theorem}\label{thm_sampled}
 Let $\mu>0$ and
$S:[0,+\infty)\to[0,1)$ be continuously differentiable, nondecreasing,
and concave with $S(0)=0$. Suppose that \eqref{eq_S_cond} and \eqref{eq_S_asy} hold for
some $\kappa\in(0,1/2]$. Denote
\begin{equation}\label{eq_xi_fin}
	  \xi
    :=
    \int_0^{+\infty}
    \bigl(1-S(r)\bigr)\,\dd r<+\infty,
\end{equation}
  and define
\begin{equation}\label{eq_sample_theta}
    \theta_{k+1}
    :=
    \frac{1}{\sqrt{\mu}}
    \left[
        \sqrt{s\mu}
        +
        \bigl(1-\sqrt{s\mu}\bigr)
        S\bigl(k\sqrt{s\mu}\bigr)
    \right],
    \qquad k\ge0.
\end{equation}
Then the following statements hold.

\begin{enumerate}
\item[(i)]
The sequence $\{\theta_k\}$ satisfies Assumption \ref{ass_discrete}. Moreover, it is
nondecreasing and
\[
    \theta_k\to\frac{1}{\sqrt{\mu}}
    \qquad
    \text{as }k\to+\infty.
\]

\item[(ii)]
The corresponding inertial coefficient \eqref{eq_beta} satisfies
\begin{equation*}
\begin{aligned}
    \beta_k
    &\to
    \frac{\kappa(k-1)}{1+\kappa k}
    &&\text{as }\mu\downarrow0
    \quad\text{for each fixed }k\ge1,\\
    \beta_k
    &\to
    \frac{1-\sqrt{s\mu}}{1+\sqrt{s\mu}}
    &&\text{as }k\to+\infty
    \quad\text{for each fixed }\mu>0.
\end{aligned}
\end{equation*}
In particular, if
$
    \kappa=1/(\gamma-1)
$
for some $\gamma\ge3$, then
\[
    \beta_k
    \to
    \frac{k-1}{k+\gamma-1}
    \qquad
    \text{as }\mu\downarrow0
\]
for every fixed $k\ge1$.

\item[(iii)] The iterates generated by either \eqref{eq_one_algorithm} or
\eqref{eq_two_algorithm}/\eqref{eq_two_affine} satisfy
\begin{equation}\label{eq_sample_rate}
\begin{aligned}
    \Phi(x_k)-\Phi^*
    \le
    \frac{\mathcal E_1}{s}
    \min\Bigg\{&
        \frac{1
        }{
           C_{s,\mu}(k-1)^2
        }, e^{\sqrt{s\mu}+\xi}
        \bigl(1-\sqrt{s\mu}\bigr)^k
    \Bigg\},
    \qquad k>1.
\end{aligned}
\end{equation}
where $C_{s,\mu}$ is defined in \eqref{eq_Csu}.
Consequently, $\Phi(x_k)-\Phi^*
    =
    \bigO\left(
        \min\left\{
            \frac{1}{k^2},
            \bigl(1-\sqrt{s\mu}\bigr)^k
        \right\}
    \right).
$
\end{enumerate}
\end{theorem}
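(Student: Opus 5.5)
The plan is to work throughout with the normalized quantities
\[
h:=\sqrt{s\mu}\in(0,1),\qquad T_k:=\sqrt{\mu}\,\theta_k .
\]
By \eqref{eq_sample_theta} we then have $T_{k+1}=h+(1-h)S(kh)$ for $k\ge0$, and in particular $T_1=h$, i.e.\ $\theta_1=\sqrt{s}$.

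\emph{Part (i).} Since $S$ is nondecreasing with values in $[0,1)$, the sequence $T_k$ is nondecreasing, satisfies $h\le T_k<1$, and tends to $1$. This gives $\theta_k\ge\sqrt s$, $\mu\theta_k^2\le1$ and $\theta_k\to1/\sqrt\mu$. For the budget condition \eqref{eq_budget} I will use the equivalent form \eqref{eq_eqvi}, multiplied by $\mu$:
\[
\frac{T_{k+1}-T_k}{h}\cdot\frac{T_{k+1}+T_k}{T_{k+1}}+T_k^2\le1 .
\]
Monotonicity gives $(T_{k+1}+T_k)/T_{k+1}\le2$. Concavity of $S$ gives $T_{k+1}-T_k=(1-h)\bigl(S(kh)-S((k-1)h)\bigr)\le(1-h)hS'(r)$ with $r:=(k-1)h$. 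It therefore suffices to show
\[
2(1-h)S'(r)+T_k^2\le1,\qquad T_k=h+(1-h)S(r).
\]
Insert $2S'(r)\le1-S(r)^2$ from \eqref{eq_S_cond}. Expanding the left-hand side then yields $1-h+h^2+h(1-h)(2S-S^2)$. Since $2S-S^2\le1$, this is at most $1$, which is exactly the required inequality.

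\emph{Part (ii).} Substituting into \eqref{eq_beta} gives
\[
\beta_k=\frac{(T_k-h)(1-hT_{k+1})}{(1-h^2)T_{k+1}},\qquad T_k-h=(1-h)S((k-1)h).
\]
For fixed $k$, let $\mu\downarrow0$, so $h\to0$. From $S(r)=\kappa r+o(r)$ we get $T_k-h\sim\kappa(k-1)h$ and $T_{k+1}\sim(1+\kappa k)h$, hence $\beta_k\to\kappa(k-1)/(1+\kappa k)$. With $\kappa=1/(\gamma-1)$ this limit equals $(k-1)/(k+\gamma-1)$. For fixed $\mu$, letting $k\to\infty$ gives $T_k\to1$, hence $\beta_k\to(1-h)^2/(1-h^2)=(1-h)/(1+h)$.

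\emph{Part (iii).} By Theorem~\ref{thm_dis} it suffices to bound $A_k$ from below. The key identity is
\[
1-hT_i=(1-h)\bigl(1+h(1-S((i-1)h))\bigr).
\]
Using $1+x\le e^x$ it gives
\[
M_k\ge(1-h)^{-k}\exp\Bigl(-\sum_{i=1}^k h\bigl(1-S((i-1)h)\bigr)\Bigr).
\]
Because $1-S$ is nonincreasing, this left-endpoint Riemann sum is at most $h+\xi$, with $\xi$ as in \eqref{eq_xi_fin}. Combined with $\theta_k^2\ge\theta_1^2=s$, this yields $A_k\ge s\,e^{-h-\xi}(1-h)^{-k}$, which is the exponential branch of \eqref{eq_sample_rate}.

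For the quadratic branch I will imitate the proof of Theorem~\ref{thm:general_transition}. The starting point is
\[
A_k\ge\frac{(1-h)^2}{\mu}S((k-1)h)^2\,M_k,\qquad M_k\ge(1-h)^{-k}\ge e^{hk}.
\]
I then use $S(r)\ge S(1)\min\{r,1\}$ and split into two cases. If $(k-1)h\le1$, then $A_k\ge s(1-h)^2S(1)^2(k-1)^2$. If $(k-1)h>1$, minimize $e^{y}/y^2$ over $y=(k-1)h$ to obtain $A_k\ge c\,s(k-1)^2$ with an explicit constant $c$. Collecting the two constants defines $C_{s,\mu}$ in the form of \eqref{eq_Csu}. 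The budget verification in (i) is conceptually the crux, but I expect this last case analysis and the constant bookkeeping to be the main labor.
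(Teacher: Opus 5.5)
Your parts (i), (ii) and the exponential branch of (iii) coincide with the paper's proof. You use the same normalization, the same concavity-plus-\eqref{eq_S_cond} computation ending in $1-h(1-h)(1-S(r))^2\le1$, the same factorization $1-hT_i=(1-h)\bigl(1+h(1-S((i-1)h))\bigr)$, and the same left-endpoint Riemann-sum bound by $h+\xi$.

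The gap is in the quadratic branch of (iii): your starting inequality $M_k\ge(1-h)^{-k}$ points the wrong way. Since $T_i<1$, every factor satisfies $1-hT_i>1-h$, hence $M_k=\prod_{i=1}^k(1-hT_i)^{-1}<(1-h)^{-k}$. The missing factor is exactly the product $\prod_{i}\bigl(1+h(1-S((i-1)h))\bigr)^{-1}<1$ that you estimated one paragraph earlier. Consequently $M_k\ge e^{hk}$ is not available either. Already $M_1=1/(1-h^2)<e^{h}$ for $h\le1/2$. More to the point, for small $h$ with $kh$ of order one, $\ln M_k\approx\int_0^{kh}S(r)\,\mathrm{d}r<kh$, and this is precisely the regime $(k-1)h>1$ where you use the bound. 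Your case $(k-1)h\le1$ is fine, because it only needs $M_k\ge1$. The case $(k-1)h>1$ is unsupported as written, and even formally it would produce the constant $(1-h)^2S(1)^2e^2/4$ rather than the entry of \eqref{eq_Csu}.

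The repair used in the paper is to bound $\ln M_k$ from below via $-\ln(1-u)\ge u$, which gives $\ln M_k\ge h\sum_{i=1}^kT_i$. Then keep only the indices $i\ge m:=\lceil 1/h\rceil+1$. For these, $(i-1)h\ge1$, so $S(r)\ge S(1)\min\{r,1\}$ gives $T_i\ge(1-h)S(1)=:a$. This yields $\ln M_k\ge a\bigl((k-1)h-1\bigr)$. Combine this with $\theta_k^2\ge a^2/\mu$ and $y:=(k-1)h\ge1$ to get $A_k\ge a^2s(k-1)^2e^{a(y-1)}/y^2\ge\frac{s}{4}a^4e^{2-a}(k-1)^2$, since the minimum of $e^{a(y-1)}/y^2$ over $y\ge1$ is attained at $y=2/a$. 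This is exactly the second entry of $C_{s,\mu}$. The effective exponential rate is $a<1$, not $1$, which is why the factor $S(1)^4(1-h)^4$ appears. Alternatively, you could reuse your exponential-branch bound $M_k\ge e^{-h-\xi}(1-h)^{-k}\ge e^{-\xi}e^{h(k-1)}$. That also gives an $\mathcal{O}(k^{-2})$ bound, but with a $\xi$-dependent constant different from the one in \eqref{eq_Csu}.
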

\begin{proof}
$(i)$ 
Since $S(0)=0$, \eqref{eq_sample_theta} gives
$
    \theta_1
    =
    \frac{\sqrt{s\mu}}{\sqrt{\mu}}
    =
    \sqrt{s}.
$
Because $0\le S(r)<1$,
$
    \sqrt{s}
    \le
    \theta_{k+1}
    <
    \frac{1}{\sqrt{\mu}}.
$
The monotonicity of $S$ and the limit $S(r)\to1$ further imply that
$\{\theta_k\}$ is nondecreasing and
\[
    \theta_k\to\frac{1}{\sqrt{\mu}}
    \qquad
    \text{as }k\to+\infty.
\]
In particular,
\[
    \mu\theta_k^2\le1,
    \qquad
    0\le\sqrt{s}\mu\theta_k<1.
\]

It remains to verify \eqref{eq_budget}. For $k\ge1$, set
$
    r:=(k-1)\sqrt{s\mu}.
$
The concavity of $S$ gives
\begin{equation}\label{eq_S_conc}
    \frac{
        S(r+\sqrt{s\mu})-S(r)
    }{
        \sqrt{s\mu}
    }
    \le
    S'(r).
\end{equation}
Since $S$ is nondecreasing, one has
$
    \theta_k\le\theta_{k+1}.
$
It follows from \eqref{eq_sample_theta} and \eqref{eq_S_conc} that
\[
\begin{aligned}
    &\frac{\theta_{k+1}-\theta_k}{\sqrt{s}}
    \frac{\theta_{k+1}+\theta_k}{\theta_{k+1}}
    +
    \mu\theta_k^2\\
    &\quad\le
    2\bigl(1-\sqrt{s\mu}\bigr)S'(r)
    +
    \left[
        \sqrt{s\mu}
        +
        \bigl(1-\sqrt{s\mu}\bigr)S(r)
    \right]^2\\
    &\quad\le
    \bigl(1-\sqrt{s\mu}\bigr)
    \bigl(1-S(r)^2\bigr)
    +
    \left[
        \sqrt{s\mu}
        +
        \bigl(1-\sqrt{s\mu}\bigr)S(r)
    \right]^2\\
    &\quad=
    1-
    \sqrt{s\mu}\bigl(1-\sqrt{s\mu}\bigr)
    \bigl(1-S(r)\bigr)^2\\
    &\quad\le1,
\end{aligned}
\]
where the second inequality follows from \eqref{eq_S_cond}. Thus,
\eqref{eq_eqvi}, and hence \eqref{eq_budget}, holds. Therefore,
Assumption \ref{ass_discrete} is satisfied.

$(ii)$
Since $S(0)=0$ and $S'(0)=\kappa$,
\[
    S(r)=\kappa r+o(r)
    \qquad
    \text{as }r\downarrow0.
\]
For every fixed $k$,
\[
    \frac{\theta_{k+1}}{\sqrt{s}}
    =
    1+
    \frac{1-\sqrt{s\mu}}{\sqrt{s\mu}}
    S\bigl(k\sqrt{s\mu}\bigr)=
    1+\kappa k+o(1)
    \qquad\text{as }\mu\downarrow0.
\]
Substituting these expansions into \eqref{eq_beta} gives
\[
    \beta_k
    \to
    \frac{\kappa(k-1)}{1+\kappa k}
    \qquad
    \text{as }\mu\downarrow0.
\]
For every fixed $\mu>0$, $(i)$ gives
$
    \theta_k\to1/\sqrt{\mu}.
$
It then follows from \eqref{eq_beta} that
\[
    \beta_k
    \to
    \frac{1-\sqrt{s\mu}}{1+\sqrt{s\mu}}
    \qquad
    \text{as }k\to+\infty.
\]
This proves $(ii)$.

$(iii)$ From
\eqref{eq_sample_theta}, we can compute
\[
\begin{aligned}
    1-\sqrt{s}\mu\theta_i
    =
    \bigl(1-\sqrt{s\mu}\bigr)
    \left(
        1+\sqrt{s\mu}
        \left(
            1-S\bigl((i-1)\sqrt{s\mu}\bigr)
        \right)
    \right).
\end{aligned}
\]
Consequently, from \eqref{eq_scales}
\begin{equation}\label{eq_sample_scale}
\begin{aligned}
    A_k
    =
    \frac{\theta_k^2}{
        \bigl(1-\sqrt{s\mu}\bigr)^k
    }
    \prod_{j=0}^{k-1}
    \frac{1}{
        1+\sqrt{s\mu}
        \left(
            1-S\bigl(j\sqrt{s\mu}\bigr)
        \right)
    }.
\end{aligned}
\end{equation}
Since $1-S$ is nonnegative and nonincreasing, from \eqref{eq_xi_fin}, we have
\[
\begin{aligned}
    &\sqrt{s\mu}
    \sum_{j=0}^{k-1}
    \left(
        1-S\bigl(j\sqrt{s\mu}\bigr)
    \right)\le
    \sqrt{s\mu}
    +
    \int_0^{(k-1)\sqrt{s\mu}}
    \bigl(1-S(r)\bigr)\,\dd r\le
    \sqrt{s\mu}+\xi.
\end{aligned}
\]
Together  with $\ln(1+u)\le u$ for $u\ge0$ yields
\[
\begin{aligned}
    &\prod_{j=0}^{k-1}
    \frac{1}{
        1+\sqrt{s\mu}
        \left(
            1-S\bigl(j\sqrt{s\mu}\bigr)
        \right)
    }=
    e^{\left(
        -\sum_{j=0}^{k-1}
        \ln\left[
            1+\sqrt{s\mu}
            \left(
                1-S\bigl(j\sqrt{s\mu}\bigr)
            \right)
        \right]
    \right)}\\
    &\quad\ge
    e^{\left(
        -\sqrt{s\mu}
        \sum_{j=0}^{k-1}
        \left(
            1-S\bigl(j\sqrt{s\mu}\bigr)
        \right)
    \right)}\\
    &\quad\ge
    e^{-\sqrt{s\mu}-\xi}.
\end{aligned}
\]
Combining this bound with \eqref{eq_sample_scale} and
$\theta_k\ge\sqrt{s}$ yields
\begin{equation}\label{eq_sample_geometric_scale}
    A_k
    \ge
    s e^{-\sqrt{s\mu}-\xi}
    \bigl(1-\sqrt{s\mu}\bigr)^{-k}.
\end{equation}

Next, we derive a global quadratic lower bound by using the
same two-region argument as in the continuous analysis.  By
\eqref{eq_S_low},
\begin{equation}\label{eq_S_low1}
	 S\bigl((k-1)\sqrt{s\mu}\bigr)
    \ge
    S(1)
    \min\left\{
        (k-1)\sqrt{s\mu},1
    \right\}.
\end{equation}

Suppose first that $
    (k-1)\sqrt{s\mu}\le1$. It follows from \eqref{eq_sample_theta} and $S(1)\in[0,1)$ that
\[
    \theta_k\ge
    \sqrt{s}
    \left[
        1+
        \bigl(1-\sqrt{s\mu}\bigr)S(1)(k-1)
    \right]\ge
    \sqrt{s}
    \bigl(1-\sqrt{s\mu}\bigr)S(1)k.
\]
Therefore, since
$0\le\sqrt{s}\mu\theta_i<1$ for $1\le i\le k$,
\eqref{eq_scales} gives
\begin{equation}\label{eq_sample_quad_small}
    A_k\ge \theta_k^2
    \ge
    s\bigl(1-\sqrt{s\mu}\bigr)^2S(1)^2k^2.
\end{equation}

Suppose next that $(k-1)\sqrt{s\mu}\ge1$. Set
$
    m:=\left\lceil\frac{1}{\sqrt{s\mu}}\right\rceil+1.
$, then $k\ge m$. For every $i\ge m$, \eqref{eq_sample_theta} and
\eqref{eq_S_low1} imply
\[
    \theta_i
    \ge
    \frac{
        \bigl(1-\sqrt{s\mu}\bigr)S(1)
    }{\sqrt{\mu}}.
\]
Moreover, using $-\ln(1-u)\ge u$ for $0\le u<1$, and
$\sqrt{s}\mu\theta_i<1$, we have
\[
\begin{aligned}
    \ln
    \prod_{i=1}^k
    \frac{1}{1-\sqrt{s}\mu\theta_i}
    &\ge
    \sum_{i=1}^k\sqrt{s}\mu\theta_i\ge
    \sum_{i=m}^k\sqrt{s}\mu\theta_i\\
    &\ge
    \bigl(1-\sqrt{s\mu}\bigr)S(1)
    \sqrt{s\mu}\,(k-m+1)\\
    &\ge
    \bigl(1-\sqrt{s\mu}\bigr)S(1)
    \left(
        (k-1)\sqrt{s\mu}-1
    \right),
\end{aligned}
\]
where the last inequality follows from
$
    \sqrt{s\mu}
    \left\lceil\frac{1}{\sqrt{s\mu}}\right\rceil
    \le 1+\sqrt{s\mu}
$ and $s\mu<1$.
Consequently, from \eqref{eq_scales}
\[
    A_k
    \ge
    \frac{
        \bigl(1-\sqrt{s\mu}\bigr)^2S(1)^2
    }{\mu}
    e^{(1-\sqrt{s\mu})S(1)
        \left(
            (k-1)\sqrt{s\mu}-1  \right)}.
    \]
As in the continuous analysis, the elementary bound
\[
    \frac{e^{ar-a}}{r^2}
    \ge
    \frac{a^2}{4}e^{2-a},
    \qquad
    r\ge1,
    \quad
    0<a<1,
\]
applied with
$
    a=\bigl(1-\sqrt{s\mu}\bigr)S(1)$ and $r=(k-1)\sqrt{s\mu}$
yields
\begin{equation}\label{eq_sample_quad_large}
\begin{aligned}
    A_k
    \ge{}&
    \frac{s}{4}
    \bigl(1-\sqrt{s\mu}\bigr)^4S(1)^4e^{
        2-\left(1-\sqrt{s\mu}\right)S(1)
    }
    (k-1)^2.
\end{aligned}
\end{equation}

Combining \eqref{eq_sample_quad_small} and
\eqref{eq_sample_quad_large}, we obtain
\begin{equation}\label{eq_sample_quad}
    A_k\ge sC_{s,\mu}(k-1)^2,
    \qquad
    k\ge1,
\end{equation}
where
\begin{equation}\label{eq_Csu}
\begin{aligned}
    C_{s,\mu}
    :=
    \min\Bigg\{
        \bigl(1-\sqrt{s\mu}\bigr)^2S(1)^2,        \frac{
            \bigl(1-\sqrt{s\mu}\bigr)^4S(1)^4
        }{4}
        e^{
            2-\left(1-\sqrt{s\mu}\right)S(1)
        }
    \Bigg\}.
\end{aligned}
\end{equation}
In particular, $C_{s,\mu}>0$, and it remains bounded away from zero as
$\mu\downarrow0$ for fixed $s$.

  The estimate \eqref{eq_sample_rate} now follows from
Theorem \ref{thm_dis}, \eqref{eq_sample_geometric_scale}, and
\eqref{eq_sample_quad}.
\end{proof}

We next apply the sampled construction to four representative transition
functions.

\begin{corollary}\label{cor_sampled}
Let $0<\kappa\le1/2$. For $r\ge0$,
consider the transition functions
\begin{equation}\label{eq_sampled_coefficients}
\begin{aligned}
    &\text{\rm Hyperbolic:}
    &\qquad
    S_{\rm hyp}(r)
    &:=
    \tanh(\kappa r),\\[3pt]
    &\text{\rm Exponential:}
    &\qquad
    S_{\rm exp}(r)
    &:=
    1-e^{-\kappa r},\\[3pt]
    &\text{\rm Algebraic:}
    &\qquad
    S_{\rm alg}(r)
    &:=
    \frac{\kappa r}{\sqrt{1+\kappa^2r^2}},\\[3pt]
    &\text{\rm Polynomial:}
    &\qquad
    S_{\rm poly}(r)
    &:=
    1-
    \left(
        1+\frac{\kappa r}{p}
    \right)^{-p},
    \qquad
    p>1.
\end{aligned}
\end{equation}
Each transition function satisfies the assumptions of
Theorem \ref{thm_sampled}. For any one of these functions, define
$\{\theta_k\}$ by \eqref{eq_sample_theta}, and generate $\{x_k\}$ by
either \eqref{eq_one_algorithm} or \eqref{eq_two_algorithm}, with the
corresponding inertial coefficient \eqref{eq_beta}.  Let $\gamma=1+\frac{1}{\kappa}\ge 3$. Then
\[
\begin{aligned}
    \beta_k
    &\to
    \frac{k-1}{k+\gamma-1}
    &&\text{as }\mu\downarrow0
    \quad\text{for each fixed }k\ge1,\\
    \beta_k
    &\to
    \frac{1-\sqrt{s\mu}}{1+\sqrt{s\mu}}
    &&\text{as }k\to+\infty
    \quad\text{for each fixed }\mu>0.
\end{aligned}
\]
Moreover,
\[
    \Phi(x_k)-\Phi^*
    =
    \bigO\left(
        \min\left\{
            \frac{1}{k^2},
            \bigl(1-\sqrt{s\mu}\bigr)^k
        \right\}
    \right).
\]
\end{corollary}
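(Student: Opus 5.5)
The corollary follows from Theorem~\ref{thm_sampled}: it suffices to check that each of the four transition functions satisfies that theorem's hypotheses, and then read off (ii) and (iii). For the exponential, algebraic and polynomial choices, the verification was already done in the proof of Corollary~\ref{cor_disT}. That proof shows $S(0)=0$, $S'(0)=\kappa$, $0\le S<1$, $S'>0$ and $S''\le0$. It also writes $2S'+S^2$ as $1$ minus a nonnegative quantity, which gives \eqref{eq_S_cond}, and it shows $S\to1$, $S'\to0$ and $\int_0^\infty(1-S)\,\dd r<\infty$, which gives \eqref{eq_S_asy}. Each of these three functions is $C^1$ on $[0,\infty)$. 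Since increasing implies nondecreasing, the hypotheses of Theorem~\ref{thm_sampled} hold.

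The only new case is $S_{\rm hyp}(r)=\tanh(\kappa r)$, which I would check directly.
\begin{itemize}[label={}]
\item Values and monotonicity: $S_{\rm hyp}(0)=0$, $0\le S_{\rm hyp}<1$, and $S_{\rm hyp}'(r)=\kappa\sech^2(\kappa r)>0$, so $S_{\rm hyp}'(0)=\kappa$.
\item Concavity: $S_{\rm hyp}''(r)=-2\kappa^2\sech^2(\kappa r)\tanh(\kappa r)\le0$.
\item Condition \eqref{eq_S_cond}: the identity from the proof of Theorem~\ref{cor:hyp_transition} gives
\[
2S_{\rm hyp}'+S_{\rm hyp}^2=2\kappa+(1-2\kappa)\tanh^2(\kappa r)\in(0,1],
\]
using $0<\kappa\le1/2$.
\item Condition \eqref{eq_S_asy}: $S_{\rm hyp}\to1$ and $S_{\rm hyp}'\to0$. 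For the integrability,
\[
1-\tanh(\kappa r)=\frac{2e^{-2\kappa r}}{1+e^{-2\kappa r}}\le 2e^{-2\kappa r},
\]
so the integral is finite; explicitly it equals $\ln 2/\kappa$, since $\int_0^\infty(1-\tanh u)\,\dd u=\ln 2$.
\end{itemize}

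With the hypotheses in place, Theorem~\ref{thm_sampled}(ii) gives $\beta_k\to(1-\sqrt{s\mu})/(1+\sqrt{s\mu})$ as $k\to\infty$. It also gives $\beta_k\to\kappa(k-1)/(1+\kappa k)$ as $\mu\downarrow0$. Substituting $\kappa=1/(\gamma-1)$, i.e.\ $\gamma=1+1/\kappa\ge3$, and multiplying numerator and denominator by $\gamma-1$ turns this limit into $(k-1)/(k+\gamma-1)$. Theorem~\ref{thm_sampled}(iii) gives the $\bigO(\min\{k^{-2},(1-\sqrt{s\mu})^k\})$ bound for both \eqref{eq_one_algorithm} and \eqref{eq_two_algorithm}.

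No step is a real obstacle. The only point needing care is the hyperbolic integrability and concavity check, together with confirming that the $\mu\downarrow0$ limit is stated with $\gamma$ defined through $\kappa$.
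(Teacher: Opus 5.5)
Your proposal is correct and follows the paper's proof essentially verbatim. You delegate the exponential, algebraic, and polynomial cases to Corollary~\ref{cor_disT}, verify monotonicity, concavity, \eqref{eq_S_cond}, and \eqref{eq_S_asy} directly for $\tanh(\kappa r)$ (including $\int_0^\infty(1-\tanh(\kappa r))\,\dd r=\ln 2/\kappa$), and read off the conclusions from Theorem~\ref{thm_sampled}(ii)--(iii), with the conversion $\kappa(k-1)/(1+\kappa k)=(k-1)/(k+\gamma-1)$ via $\kappa=1/(\gamma-1)$ being the same step the paper records in Theorem~\ref{thm_sampled}(ii).
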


\begin{proof}
The required properties of the exponential, algebraic, and polynomial
transition functions follow from
Corollary \ref{cor_disT}. It remains to verify the
concavity condition for the hyperbolic function.

For
$
    S_{\rm hyp}(r)=\tanh(\kappa r),
$
one has
\[
    S_{\rm hyp}'(r)
    =
    \kappa\bigl(1-S_{\rm hyp}(r)^2\bigr)>0,
\]
and
\[
    S_{\rm hyp}''(r)
    =
    -2\kappa^2S_{\rm hyp}(r)
    \bigl(1-S_{\rm hyp}(r)^2\bigr)
    \le0.
\]
Thus, $S_{\rm hyp}$ is increasing and concave. Moreover,
\[
    2S_{\rm hyp}'(r)+S_{\rm hyp}(r)^2
    =
    2\kappa+
    (1-2\kappa)S_{\rm hyp}(r)^2
    \le1,
\]
and this quantity is strictly positive. Finally,
\[
    S_{\rm hyp}(0)=0,
    \qquad
    S_{\rm hyp}'(0)=\kappa,
    \qquad
    S_{\rm hyp}(r)\to1,
    \qquad
    S_{\rm hyp}'(r)\to0,
\]
and
\[
    \int_0^{+\infty}
    \bigl(1-\tanh(\kappa r)\bigr)\,\dd r
    =
    \frac{\log 2}{\kappa}.
\]
The conclusion follows from Theorem \ref{thm_sampled}.
\end{proof}

\begin{remark}\label{rem_sampled_transition}
The sampled coefficients provide a discrete transition between the
classical convex and strongly convex parameter regimes. Indeed, for every
fixed $k\ge1$, \[
    \beta_k
    \to
    \frac{k-1}{k+\gamma-1}\qquad \gamma\ge 3 \qquad\text{as }\mu\downarrow0
\]
For every fixed $\mu>0$, one instead has
\[
        \beta_k
    \to
    \frac{1-\sqrt{s\mu}}{1+\sqrt{s\mu}}
    \qquad
    \text{as }k\to+\infty.
\]
Thus, the sampled construction moves from the linearly growing convex
coefficients to the constant strongly convex coefficient.

This transition is consistent with the continuous-time construction in
Section~\ref{sec:dyna}. In both settings, the behavior near the origin,
determined by $S'(0)=\kappa$, specifies the convex endpoint, whereas the
limit $S(r)\to1$ specifies the strongly convex endpoint. Different choices
of $S$ produce different transition speeds while preserving the same two
endpoint regimes and the same quadratic and geometric convergence orders.
In the final numerical section, we compare these sampled transitions with
the equality branch. On the tested problems, this comparison examines
whether more gradual transitions can improve the transient objective
decrease, particularly when the strong convexity parameter is small.
\end{remark}

\section{Numerical experiments}
\label{sec_numerics}

This section illustrates the continuous and discrete transition mechanisms
developed in this paper. We first compare several damping
coefficients for the continuous-time dynamic \eqref{dy_main} on a
regularized logistic regression problem. We then examine the one-sequence
and two-sequence accelerated forward-backward schemes
\eqref{eq_one_algorithm} and \eqref{eq_two_algorithm}, equipped with
different inertial coefficients, on an elastic-net regularized inverse
problem and a total-variation image-denoising problem. The experiments
focus on how the transition from the convex regime to the strongly convex
regime affects convergence behavior. The numerical results demonstrate the
effectiveness of the proposed unified continuous-time dynamics and
discrete algorithms.

\subsection{Regularized logistic regression problem}
\label{subsec_continuous}

We first investigate the effect of the convex-limit damping and the choice
of transition function on a regularized logistic regression problem based
on the \texttt{a9a} dataset from the LIBSVM collection. The dataset contains
$N=32561$ training samples with $d=123$ features, and the labels are
encoded as elements of $\{-1,1\}$. We consider
\[
    \min_{x\in\mathbb R^d}
    \Phi(x)
    :=
    \frac{1}{N}
    \sum_{j=1}^N
    \log\left(
        1+e^{-y_j\langle a_j,x\rangle}
    \right)
    +
    \frac{\mu}{2}\norm{x}^2,
\]
where $\mu>0$ is the regularization parameter. A high-accuracy
approximation $x^*$ of the minimizer is computed using a Newton-type
method and is used to evaluate the objective gap.

All continuous-time dynamics are solved in MATLAB using
\texttt{ode45}, with $t_0=0.1$ and $\dot x(t_0)=0$. For each value of
$\mu$, we report
$
    \Phi(x(t))-\Phi(x^*).
$
The seven coefficient choices are summarized in
Table \ref{tab_dy}. The first two are the classical convex and
strongly convex endpoint dynamics, while the remaining five are generated
by transition functions. 
 
 \begin{table}[htbp]
\centering
\caption{Coefficient choices for the \texttt{a9a} logistic regression
experiment.}
\label{tab_dy}
\small

\begingroup
\renewcommand{\arraystretch}{1.55}
\setlength{\extrarowheight}{1.5pt}
\setlength{\tabcolsep}{5pt}

\resizebox{\textwidth}{!}{
\begin{tabular}{
    C{0.7cm}|
    C{2.8cm}|
    C{7.4cm}|
    C{2.0cm}|
    C{2.0cm}
}
\hline
\multirow[c]{2}{*}{ID}
&
\multirow[c]{2}{*}{Choice}
&
\multirow[c]{2}{*}{$\theta(t)$}
&
\multicolumn{2}{|c}{$\delta_\mu(t)$}
\\
\cline{4-5}
&
&
&
$\mu\downarrow0$
&
$t\to+\infty$
\\
\hline

$C_1$
&
NAG-C
&
$\displaystyle \frac{t}{3}$ 
&
$\displaystyle \frac{4}{t}$
&
$0$
\\
\hline

$C_2$
&
NAG-SC
&
$\displaystyle \frac{1}{\sqrt{\mu}}$
&
$0$
&
$\displaystyle 2\sqrt{\mu}$
\\
\hline

$C_3$
&
Equality
&
$\displaystyle
\frac{1}{\sqrt{\mu}}
\tanh\left(\frac{\sqrt{\mu}t}{2}\right)$
&
$\displaystyle \frac{3}{t}$
&
\multirow[c]{5}{*}{$\displaystyle 2\sqrt{\mu}$}
\\
\cline{1-4}

$C_4$
&
Hyperbolic
&
$\displaystyle
\frac{1}{\sqrt{\mu}}
\tanh\left(\frac{\sqrt{\mu}t}{3}\right)$
&
$\displaystyle \frac{4}{t}$
&
\\
\cline{1-4}

$C_5$
&
Exponential
&
$\displaystyle
\frac{1}{\sqrt{\mu}}
\left(
    1-e^{-\frac{\sqrt{\mu}t}{4}}
\right)$
&
$\displaystyle \frac{5}{t}$
&
\\
\cline{1-4}

$C_6$
&
Algebraic
&
$\displaystyle
\frac{1}{\sqrt{\mu}}
\frac{\frac{\sqrt{\mu}t}{4}}
{\sqrt{1+\left(\frac{\sqrt{\mu}t}{4}\right)^2}}$
&
$\displaystyle \frac{5}{t}$
&
\\
\cline{1-4}

$C_7$
&
Polynomial
&
$\displaystyle
\frac{1}{\sqrt{\mu}}
\left[
    1-
    \left(
        1+\frac{\sqrt{\mu}t}{25}
    \right)^{-5}
\right]$
&
$\displaystyle \frac{6}{t}$
&
\\
\hline
\end{tabular}
}
\endgroup
\end{table}

The choices in Table \ref{tab_dy} are designed to separate the
effects of the two endpoint damping regimes from those of the transition
between them. The first two choices are nontransitioning 
dynamics. Choice $C_1$ is the convex Nesterov accelerated gradient flow  (NAG-C)
\eqref{dy_NAG_C} with $\gamma=4$. Its damping coefficient is $4/t$, independently of the strong-convexity parameter of the objective, and
the corresponding objective residual satisfies
$
    \Phi(x(t))-\Phi^*=\bigO(t^{-2}).
$
Choice $C_2$ is the strongly convex Nesterov accelerated gradient flow (NAG-SC)
\eqref{dy_NAG_SC}, with constant damping $2\sqrt{\mu}$ and the
exponential estimate
$
    \Phi(x(t))-\Phi^*
    =
    \bigO\left(e^{-\sqrt{\mu}t}\right).
$
Thus, $C_1$ retains the convex time-dependent damping but has no positive
large-time damping level, whereas $C_2$ applies the strongly convex
damping from the initial time and has no convex-type transient regime.
Choice $C_3$ is the hyperbolic equality-branch transition corresponding
to $\kappa=1/2$. It connects the critical convex damping $3/t$ to the
strongly convex value $2\sqrt{\mu}$ and is related to the constructions
in \cite[Section~3.2]{LuoChen2022} and
\cite[Theorem~4.1]{KimYang2023}. This choice provides a direct transition
between the two classical endpoint dynamics, but it fixes both the
convex-limit damping and the form of the transition. Choice $C_4$ remains within the hyperbolic family but uses
$\kappa=1/3$. Its convex limit is therefore $4/t$, while its strongly
convex endpoint remains $2\sqrt{\mu}$. The comparison between $C_3$ and
$C_4$ shows the effect of replacing the critical convex damping $3/t$ by
the supercritical damping $4/t$. The comparison between $C_1$ and $C_4$
is also informative: both have the same convex-limit damping $4/t$, but
only $C_4$ transitions to a positive strongly convex damping level.
Choices $C_5$ and $C_6$ have the same convex limit $5/t$ but use
exponential and algebraic transition functions, respectively. Their
comparison therefore isolates the effect of the transition function while
keeping both endpoint damping coefficients fixed. Choice $C_7$ provides
an additional polynomial transition with convex limit $6/t$. Its
finite-time behavior is compared with those of the other transition
families below.

By the convergence results in Section \ref{sec23}, all transition choices
$C_3$--$C_7$ satisfy
\[
    \Phi(x(t))-\Phi^*
    =
    \bigO\left(
        \min\left\{
            \frac{1}{t^2},
            e^{-\sqrt{\mu}t}
        \right\}
    \right).
\]
Thus, they have the accelerated convex rate as $\mu\downarrow0$ and the
exponential strongly convex rate for every fixed $\mu>0$. Differences
between their numerical trajectories therefore reflect their transient
behavior rather than their asymptotic convergence orders.

  \begin{figure}[H]
\centering
\begin{subfigure}[t]{0.49\textwidth}
      \raggedright
    \includegraphics[width=\linewidth]{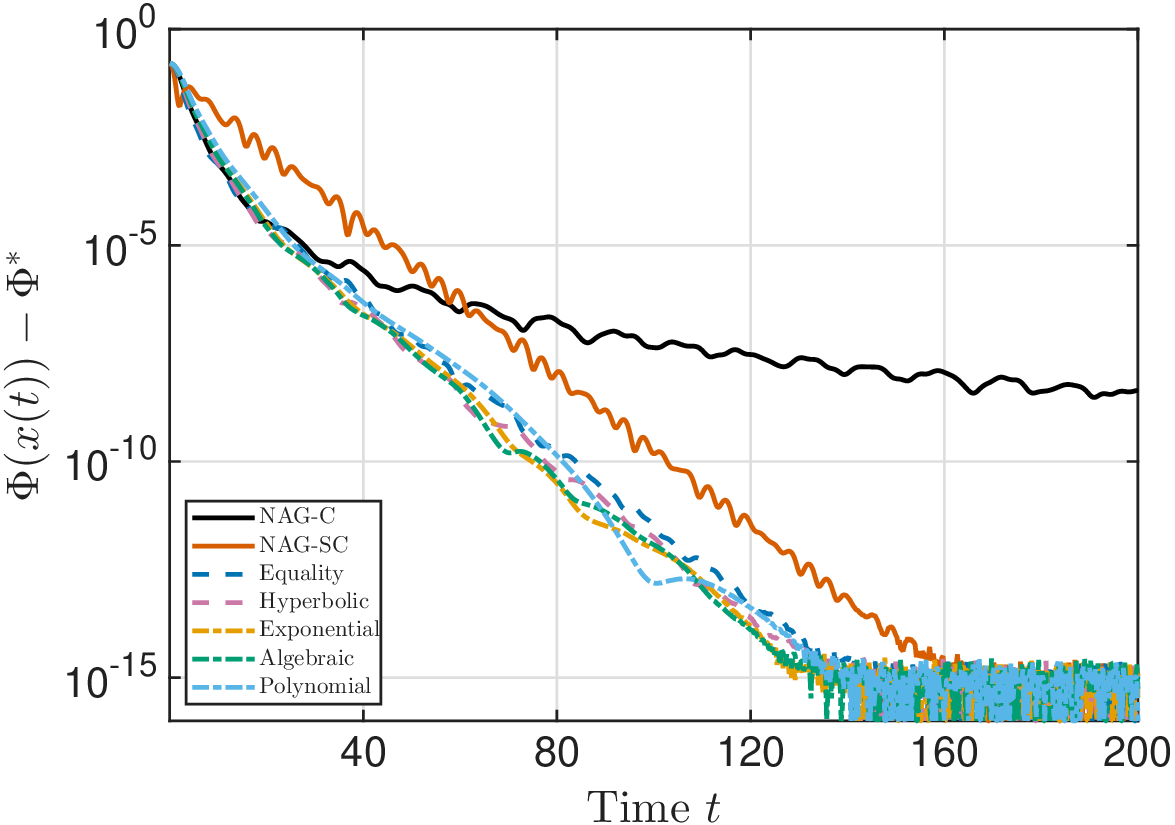}
\end{subfigure}
\hfill
\begin{subfigure}[t]{0.49\textwidth}
      \raggedright
    \includegraphics[width=\linewidth]{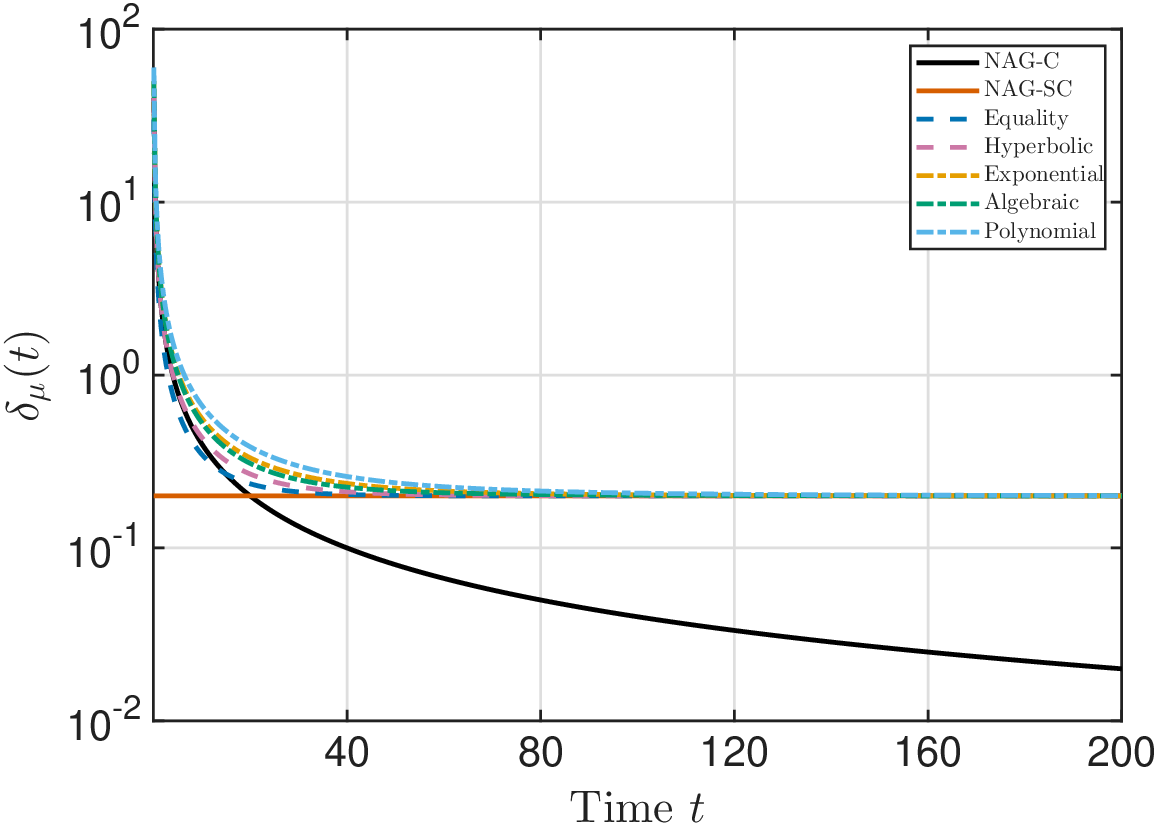}
\end{subfigure}
\caption{Objective residuals and damping coefficients for  $\mu=10^{-2}$.}
\label{fig_mu02}
\end{figure}

 \begin{figure}[H]
\centering
\begin{subfigure}[t]{0.49\textwidth}
      \raggedright
    \includegraphics[width=\linewidth]{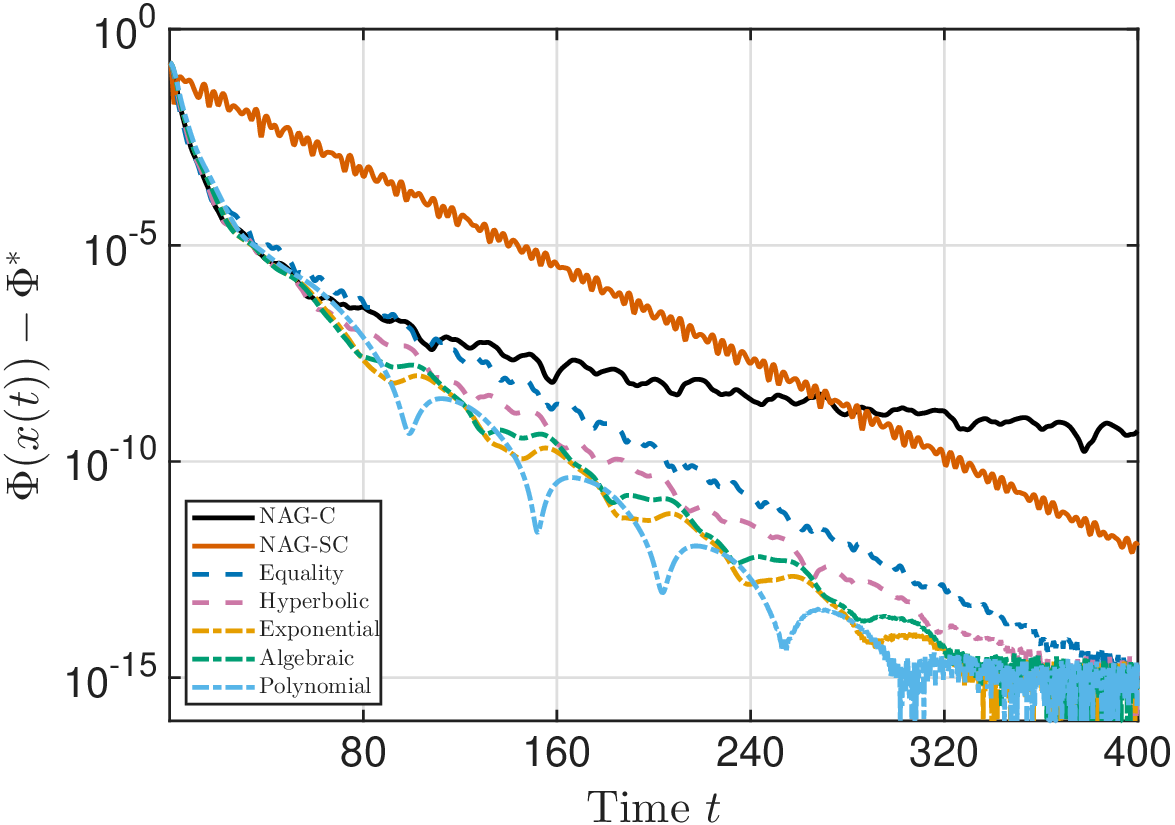}
\end{subfigure}
\hfill
\begin{subfigure}[t]{0.49\textwidth}
      \raggedright
    \includegraphics[width=\linewidth]{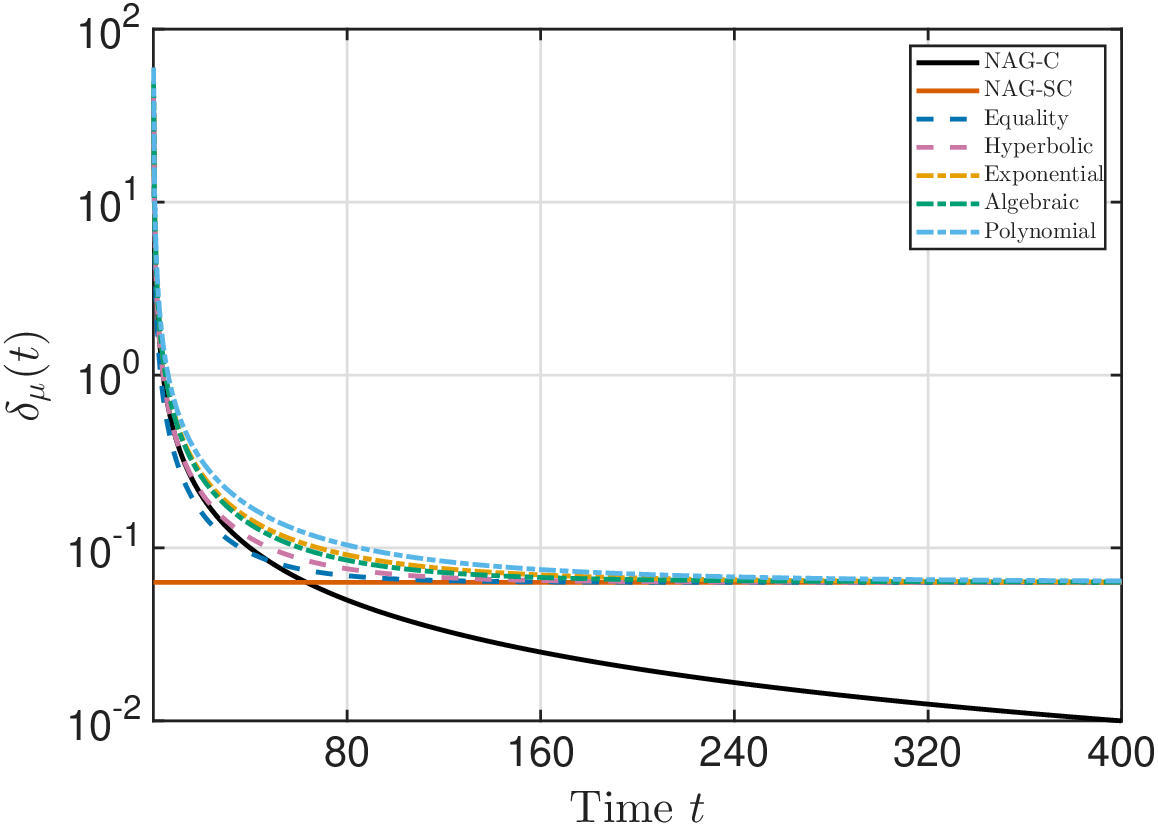}
\end{subfigure}
\caption{Objective residuals and damping coefficients for  $\mu=10^{-3}$.}
\label{fig_mu03}
\end{figure}

 \begin{figure}[H]
\centering
\begin{subfigure}[t]{0.49\textwidth}
      \raggedright
    \includegraphics[width=\linewidth]{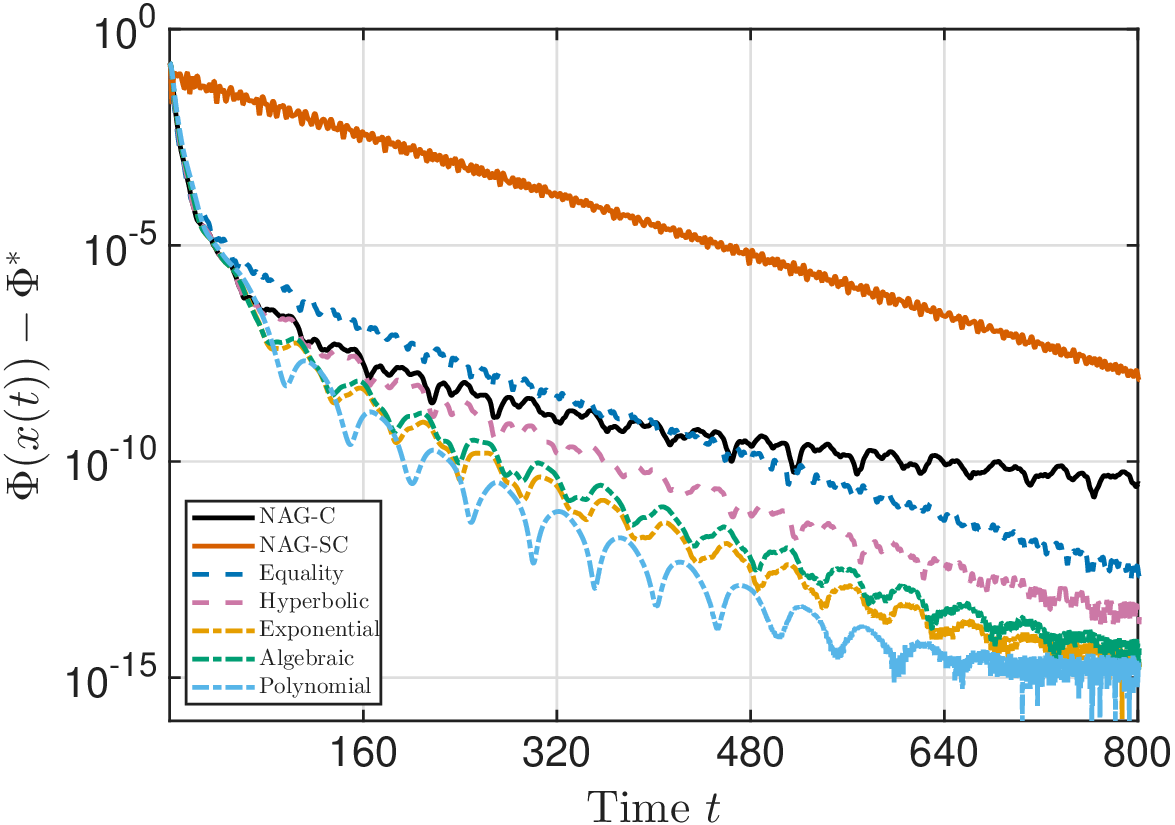}
\end{subfigure}
\hfill
\begin{subfigure}[t]{0.49\textwidth}
      \raggedright
    \includegraphics[width=\linewidth]{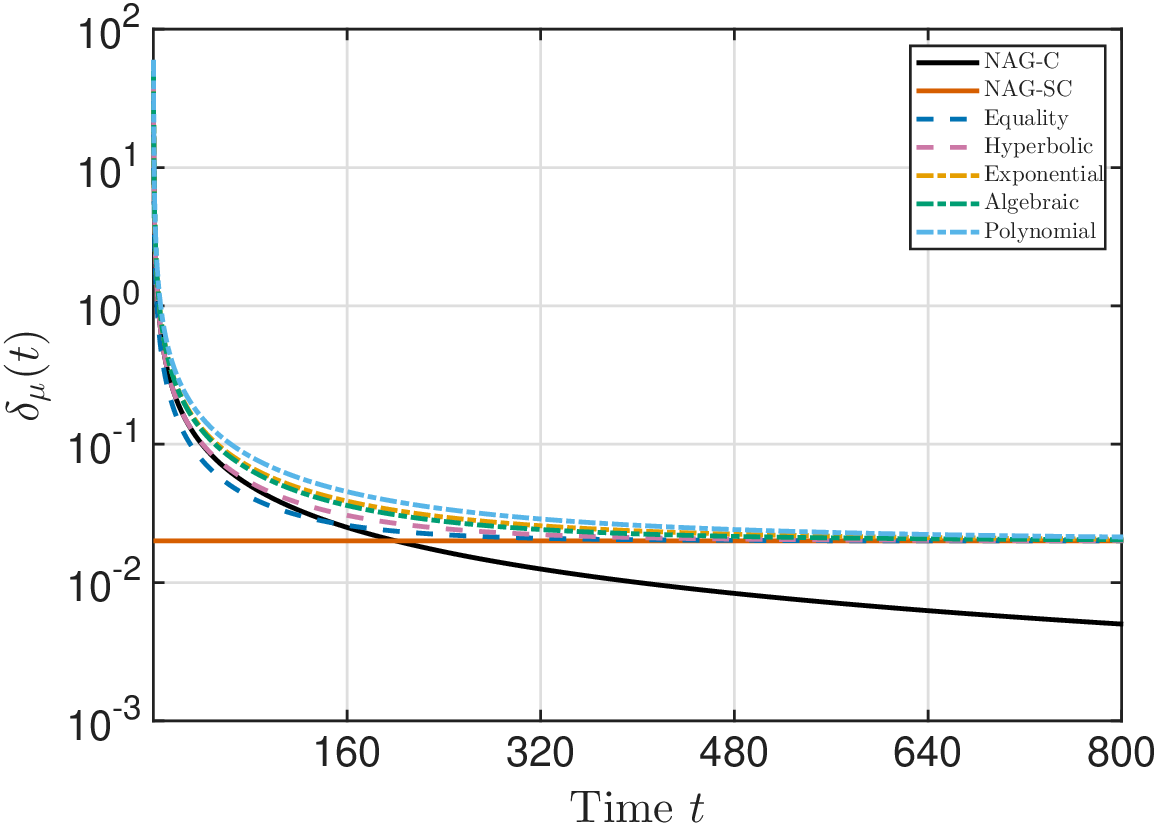}
\end{subfigure}
\caption{Objective residuals and damping coefficients for  $\mu=10^{-4}$.}
\label{fig_mu04}
\end{figure}

 \begin{figure}[H]
\centering
\begin{subfigure}[t]{0.49\textwidth}
      \raggedright
    \includegraphics[width=\linewidth]{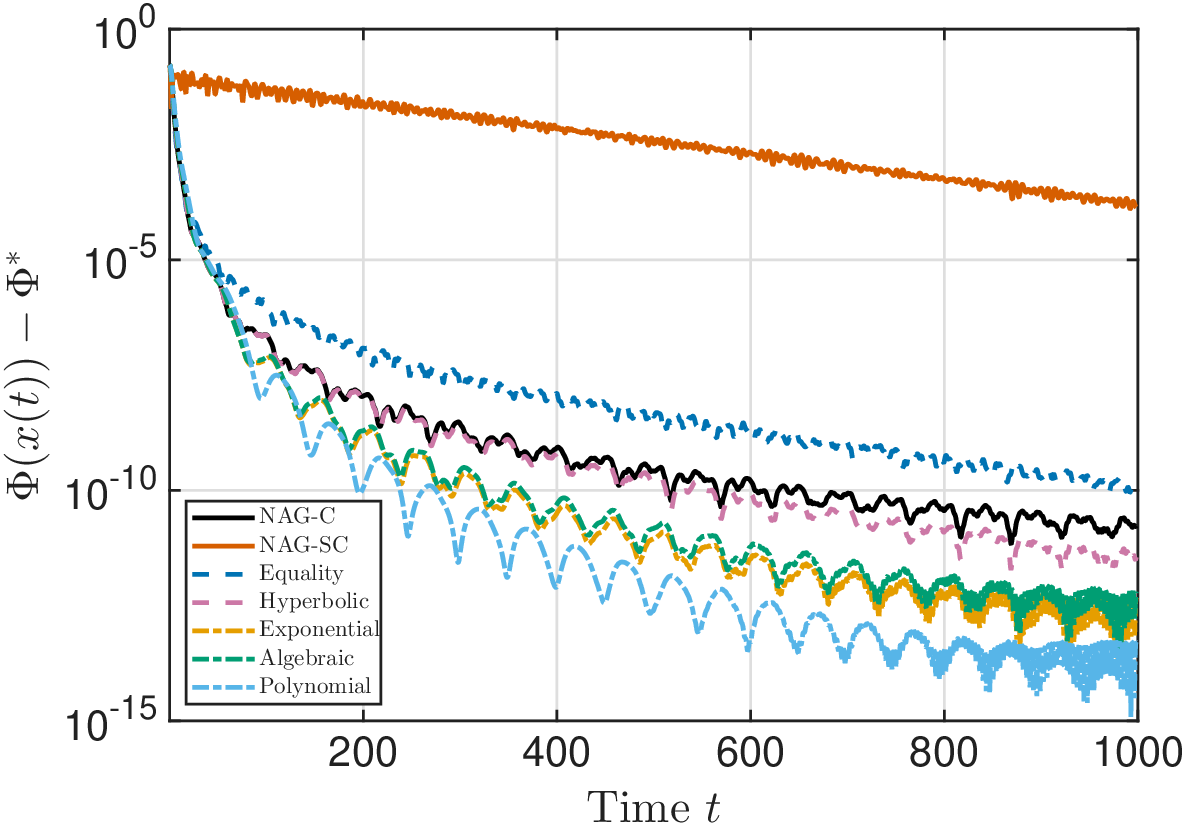}
\end{subfigure}
\hfill
\begin{subfigure}[t]{0.49\textwidth}
      \raggedright
    \includegraphics[width=\linewidth]{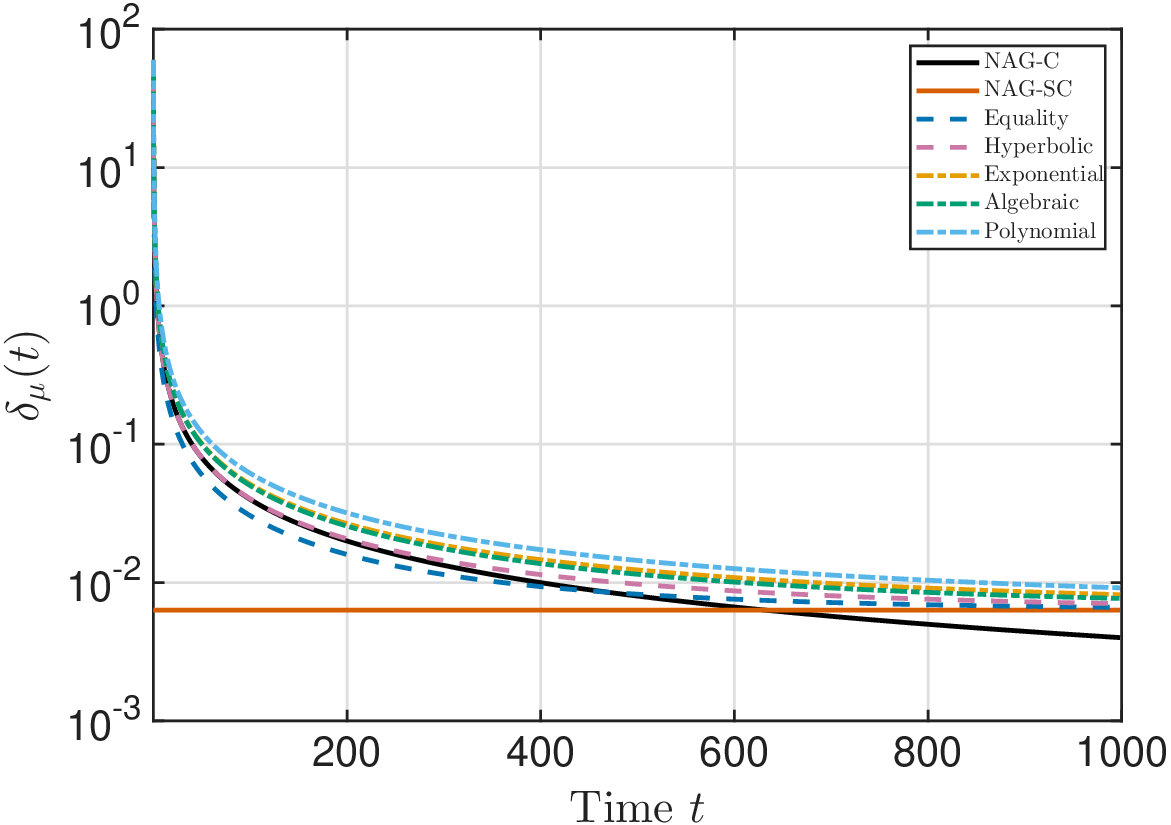}
\end{subfigure}
\caption{Objective residuals and damping coefficients for   $\mu=10^{-5}$.}
\label{fig_mu05}
\end{figure}

For $\mu=10^{-2}$, Figure \ref{fig_mu02} shows that, after the initial
transient, the transition dynamics $C_3$--$C_7$ produce smaller objective
residuals than both endpoint flows (NAG-C and NAG-SC) over the reported time interval. The
differences among the transition choices are relatively small, although
their damping coefficients follow visibly different paths.
As $\mu$ decreases, the differences among the transition choices become
larger. Figures~\ref{fig_mu03}--\ref{fig_mu05} show that transition
damping coefficients that approach the strongly convex limit
$2\sqrt{\mu}$ more gradually generally produce smaller objective residuals
over the reported time intervals. In particular, $C_4$ produces smaller
objective residuals than the equality-branch choice $C_3$, while
$C_5$--$C_7$ yield further reductions  over most of the reported time intervals. For $\mu=10^{-5}$, $C_7$ gives the smallest objective gap over
most of the simulation interval. Thus, for the problem and parameter
range considered here, a more gradual transition toward the strongly
convex damping is beneficial when $\mu$ is small.
The behavior of the equality-branch transition $C_3$, considered in
\cite{LuoChen2022,KimYang2023}, is particularly clear in
Figures \ref{fig_mu04} and \ref{fig_mu05}. For these small values of $\mu$, the objective
gap produced by $C_3$ remains larger than that of the convex endpoint flow
$C_1$ over most of the reported time interval. This indicates that, on the
tested problem, a rapid approach to the strongly convex damping
$2\sqrt{\mu}$ may lead to less favorable  finite-time behavior when $\mu$ is
small. The sampled construction avoids this restriction by allowing both
the convex-limit damping and the transition toward $2\sqrt{\mu}$ to be
adjusted.

For the tested values of $\mu$, the proposed transition dynamics produce
substantially smaller objective residuals than both endpoint flows when the
strong convexity parameter is small. Their improvement over the equality
branch becomes more evident as $\mu$ decreases. This numerical observation
indicates that the most effective transition depends on the structure of
the objective function,   the strong convexity parameter,
and the required solution accuracy.

 \subsection{Elastic-net regularized problem}
\label{subsec_discrete_numerics}
 
 We next evaluate the sampled discrete transition coefficients on an
elastic-net regularized  problem. Such models arise in sparse
recovery, imaging, and high-dimensional regression
\cite{Tibshirani1996,Daubechiesl04}. The $\ell_1$ term promotes sparse
solutions, while the quadratic regularization term supplies strong
convexity and stabilizes the problem when the forward operator is rank
deficient \cite{ZouHastie2005}.

We consider the elastic-net regularized problem
\begin{equation}\label{eq_discrete_test_problem}
    \min_{x\in\mathbb R^n}
    \Phi(x)
    :=
    \frac{1}{2}\norm{Ax-b}^2
    +
    \frac{\mu}{2}\norm{x}^2
    +
    \lambda\norm{x}_1,
\end{equation}
where $0<\mu\ll1$ is the quadratic regularization parameter. We write
\[
    f(x)
    :=
    \frac{1}{2}\norm{Ax-b}^2
    +
    \frac{\mu}{2}\norm{x}^2,
    \qquad
    g(x):=\lambda\norm{x}_1.
\]
Then $f$ is $\mu$-strongly convex, and its gradient is Lipschitz
continuous with constant
$
    L=\norm{A}^2+\mu.
$
The proximal mapping of $g$ is the componentwise soft-thresholding
operator
\[
    \prox_{\tau g}(v)
    =
    \operatorname{sign}(v)
    \odot
    \max\{|v|-\tau\lambda,0\}.
\]

For each experiment, we generate a Gaussian matrix
$A_0\in\mathbb R^{m\times n}$ with $m=2000$ and $n=4000$, whose entries
are independent and distributed according to $\mathcal N(0,1/m)$. We
then normalize it by its operator norm:
$
    A:={A_0}/{\norm{A_0}},
$
so that $\norm{A}=1$. Since $m<n$, the matrix $A^\top A$ is singular
with probability one. The data-fitting term alone is therefore not
strongly convex, and the strong convexity of
\eqref{eq_discrete_test_problem} is supplied by
$\mu\norm{x}^2/2$. We generate a reference vector $x^\dagger\in\mathbb R^n$ with
$q=0.05n$ nonzero entries. Its support is sampled uniformly among all
subsets of $\{1,\ldots,n\}$ of cardinality $q$. On this support, the
entries are generated according to
$
    x_j^\dagger=\varepsilon_j(1+u_j),
$
where $\varepsilon_j$ is uniformly distributed on $\{-1,1\}$ and
$u_j\sim\mathcal U(0,1)$. 
  We also draw
$\zeta\sim\mathcal N(0,I_m)$ and define the normalized noise direction
$
    d:=\zeta/\|\zeta\|.
$
For a prescribed relative noise level $\delta=10^{-2}$, the perturbation and the
corresponding observation are defined by $ b:=Ax^\dagger+\delta\|Ax^\dagger\|d$.
Since $\|d\|=1$, it follows that
$
    \frac{\|b-Ax^\dagger\|}
         {\|Ax^\dagger\|}
    =
    \delta.
$
The same realization of $A$, $x^{\dagger}$, and $d$ is used in both
experiments, and only the strong convexity parameter $\mu$ is varied.

 \begin{figure}[H]
\centering
\begin{subfigure}[t]{0.49\textwidth}
      \raggedright
    \includegraphics[width=\linewidth]{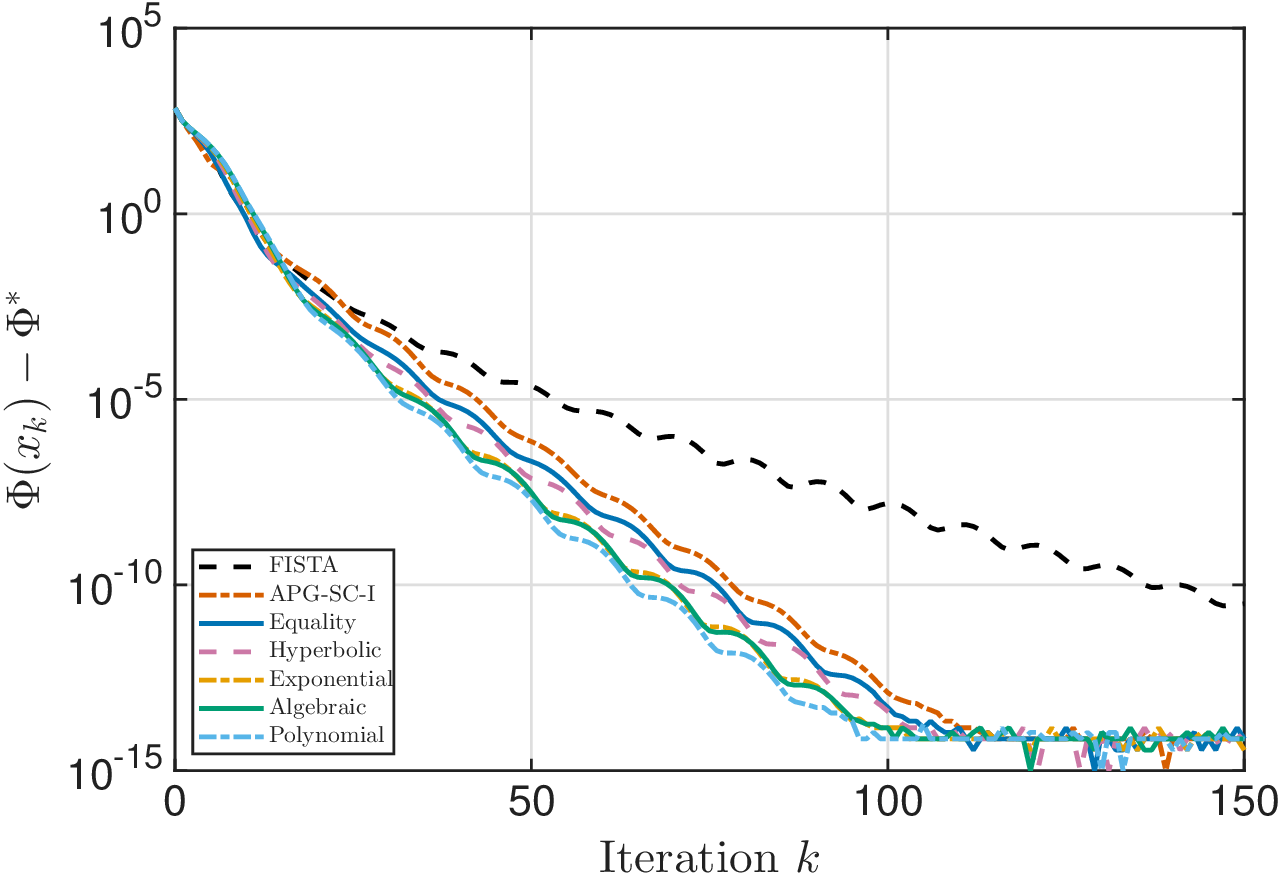}
\end{subfigure}
\hfill
\begin{subfigure}[t]{0.49\textwidth}
      \raggedright
    \includegraphics[width=\linewidth]{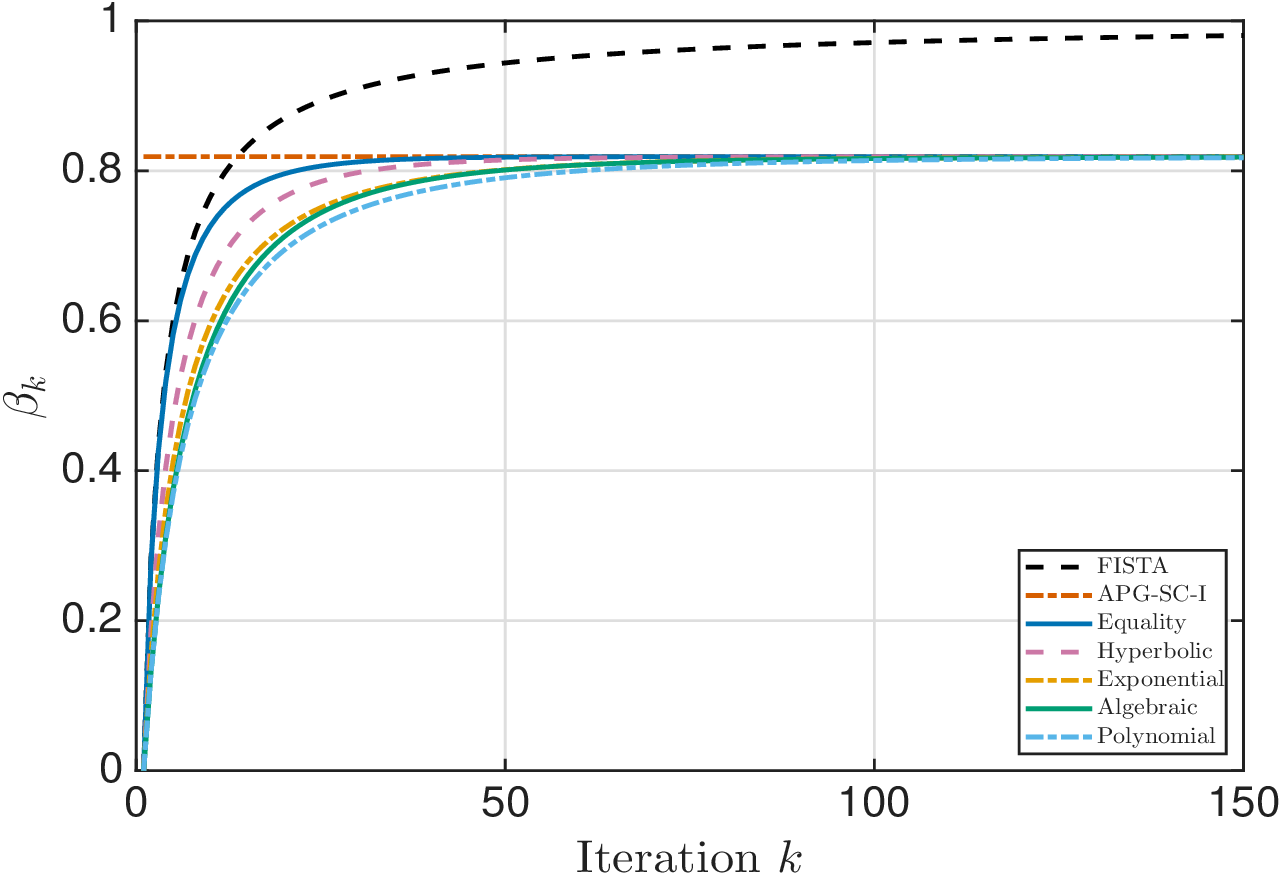}
\end{subfigure}
\caption{Objective residuals and inertial coefficients for   $\mu=10^{-2}$.}
\label{fig_oneu02}
\end{figure}

 \begin{figure}[H]
\centering
\begin{subfigure}[t]{0.49\textwidth}
      \raggedright
    \includegraphics[width=\linewidth]{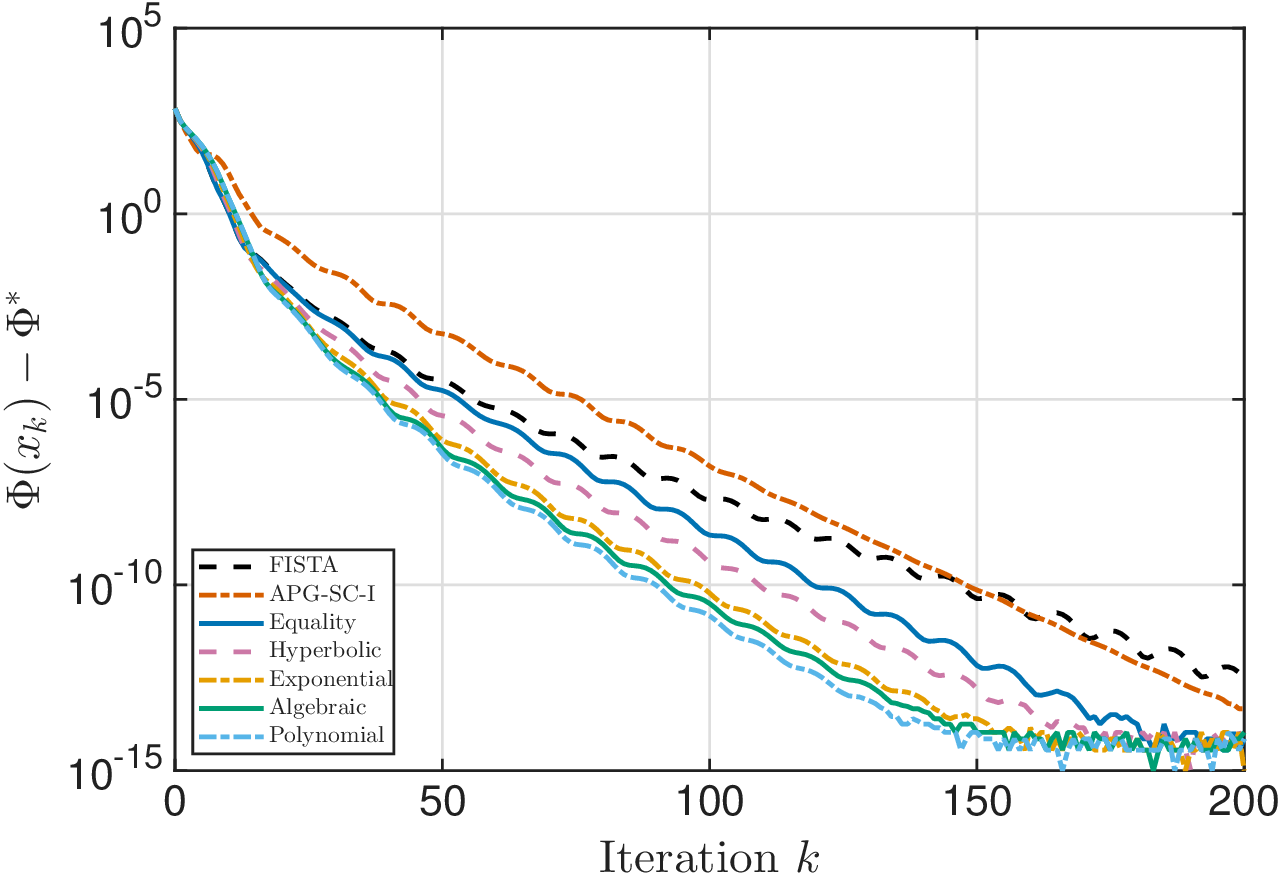}
\end{subfigure}
\hfill
\begin{subfigure}[t]{0.49\textwidth}
      \raggedright
    \includegraphics[width=\linewidth]{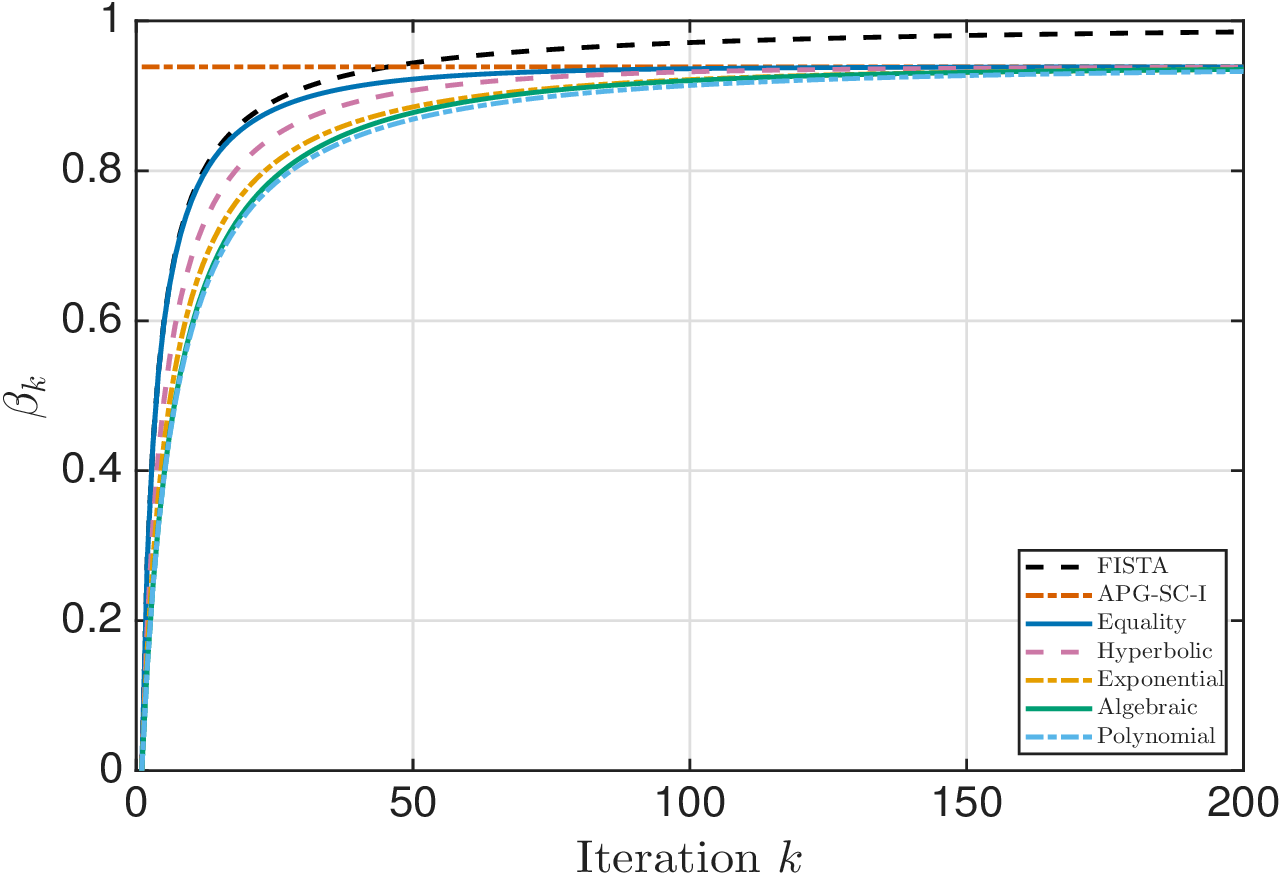}
\end{subfigure}
\caption{Objective residuals and inertial coefficients for   $\mu=10^{-3}$.}
\label{fig_oneu03}
\end{figure}

All methods use the one-sequence accelerated forward-backward scheme
\eqref{eq_one_algorithm} and differ only in their inertial coefficients.
The convex  method is the fast iterative
shrinkage-thresholding algorithm (FISTA) \cite{BeckFista}, with
$
    \beta_k
    =
    \frac{t_k-1}{t_{k+1}}$ with $t_{k+1}
    =
    \frac{1+\sqrt{1+4t_k^2}}{2}.
$
The strongly convex  method is the first-type strongly convex
Nesterov accelerated proximal-gradient scheme (APG-SC-I), in the terminology of
\cite{LinML}, with constant inertial coefficient
$
    \beta_k
    =
    \frac{1-\sqrt{\mu/L}}
         {1+\sqrt{\mu/L}}.
$
The equality-branch coefficient is defined by
\eqref{eq_beta_transition}. The resulting one-sequence scheme coincides
with \cite[Algorithm~5]{ChambolleActa}. The four sampled transition sequences are generated by \eqref{eq_beta}, with $\theta_k$ defined by \eqref{eq_sample_theta}. We use the
hyperbolic transition with $\kappa=1/3$, the exponential transition with
$\kappa=1/4$, the algebraic transition with $\kappa=1/5$, and the
polynomial transition with $\kappa=1/5$ and $p=9$. We fix $\lambda=0.1$. Since $\norm{A}=1$, the smoothness constant is
$
    L=1+\mu,
$
and all methods use
$
    s
    =
    \frac{1}{L}
    =
    \frac{1}{1+\mu}.
$
A high-accuracy numerical reference is obtained by running APG-SC-I for $5000$ iterations. The resulting
approximation is denoted by $x^\star$, and the convergence  report
$
    \Phi(x_k)-\Phi^\star.
$

 Figures~\ref{fig_oneu02} and \ref{fig_oneu03} compare the objective residuals
and inertial coefficients for $\mu=10^{-2}$ and $\mu=10^{-3}$,
respectively. The transition coefficients approach the common strongly
convex limit
$
    \frac{1-\sqrt{\mu/L}}
         {1+\sqrt{\mu/L}}
$
at different speeds. For both values of $\mu$, the sampled transition schemes \eqref{eq_sampled_coefficients} produce smaller
objective residuals than FISTA and the constant-coefficient strongly convex
Nesterov scheme over most of the reported iterations. They also reach the
numerical accuracy floor earlier. Although the equality branch selects the
largest admissible coefficient at each iteration, its rapid transition
toward the strongly convex limiting coefficient does not yield the
smallest objective residual in these experiments. The sampled sequences
approach this limit more gradually and produce smaller objective residuals than
the equality branch, with the algebraic and polynomial transitions giving
the smallest residuals among the tested choices. As $\mu$ decreases, the differences among the sampled inertial
coefficients become more apparent. In particular, the algebraic and
polynomial transitions produce smaller objective residuals than the equality
branch over most of the reported iterations. This ordering is consistent
with that observed for the continuous-time dynamics and indicates that the
effect of the transition toward the strongly convex regime is preserved
in the discrete schemes.

\subsection{TV-Huber ROF denoising}

We consider the TV-Huber ROF denoising model \cite{ChambolleActa,Rudin}:
\[
    \min_{u\in\mathbb{R}^{m\times n}}
    \left\{
        \lambda H_{\epsilon}(u)
        +\frac{1}{2}\lVert u-u^{0}\rVert^{2}
    \right\}.
\]
The ROF model balances fidelity to the observed noisy image $u^{0}$
with total-variation regularization, which promotes piecewise smooth
reconstructions while preserving sharp edges. The parameter
$\lambda>0$ controls the relative strength of the regularization.

Let
\[
    Du=(D_1u,D_2u)
\]
denote the discrete image gradient, where $D_1$ and $D_2$ are the
horizontal and vertical forward finite-difference operators. The
Huber-smoothed isotropic total variation is defined by
\[
    H_{\epsilon}(u)
    =
    \sum_{i=1}^{m}\sum_{j=1}^{n}
    h_{\epsilon}\bigl(\norm{(Du)_{i,j}}_2\bigr),
\]
where
\[
    h_{\epsilon}(t)
    =
    \begin{cases}
        \dfrac{t^{2}}{2\epsilon},
        & 0\leq t\leq\epsilon,\\[5pt]
        t-\dfrac{\epsilon}{2},
        & t>\epsilon.
    \end{cases}
\]
The parameter $\epsilon>0$ smooths the TV functional near the origin
while retaining its approximately linear growth for large gradients. The corresponding dual problem is:
\begin{equation}\label{eq_ques3}
	 \min_{x\in\mathbb{R}^{m\times n\times 2}}
    \Phi(x)
    :=
    \frac{1}{2}\lVert D^{*}x-u^{0}\rVert^{2}
    +\frac{\epsilon}{2\lambda}\lVert x\rVert^{2}
    +\delta_{C}(x),
\end{equation}
where $D^{*}$ denotes the adjoint of $D$, namely the discrete negative
divergence operator, and
\[
    C
    :=
    \left\{
        x\in\mathbb{R}^{m\times n\times 2}
        :
        \lVert x_{i,j}\rVert\leq\lambda
        \ \text{for all }(i,j)
    \right\}.
\]
The indicator function $\delta_{C}$ enforces the pointwise dual
constraint. Once a dual solution $x$ is obtained, the corresponding
denoised image is recovered by
$
    u=u^{0}-D^{*}x.
$

We use the  following equivalent composite decomposition of problem \eqref{eq_ques3}:
\[
    \Phi(x)=f(x)+g(x),
\]
where
\[
    f(x)
    :=
    \frac{1}{2}\lVert D^{*}x-u^{0}\rVert^{2}
    +\frac{\mu}{2}\lVert x\rVert^{2},
    \qquad
    g(x):=\delta_{C}(x),
    \qquad
    \mu:=\frac{\epsilon}{\lambda}.
\]
Then$
    \nabla f(p)
    =
    D\bigl(D^{*}p-u^{0}\bigr)+\mu p.
$
Since $\|D\|^2\le 8$ \cite{ChambolleJMIV}, the gradient of $f$ is
Lipschitz continuous with constant
$
    L=\|D\|^2+\mu\le 8+\mu.
$
Accordingly, we take $L=8+\mu$ and $s=1/L$.

The proximal mapping of $g$ is the pixelwise Euclidean projection
onto $C$:
\[
    \bigl[
        \operatorname{prox}_{\tau g}(q)
    \bigr]_{i,j}
    =
    \frac{q_{i,j}}
    {\max\left\{
        1,\,
        \lVert q_{i,j}\rVert_{2}/\lambda
    \right\}}.
\]

 \begin{figure}[H]
\centering
\begin{subfigure}[t]{0.49\textwidth}
      \raggedright
    \includegraphics[width=\linewidth]{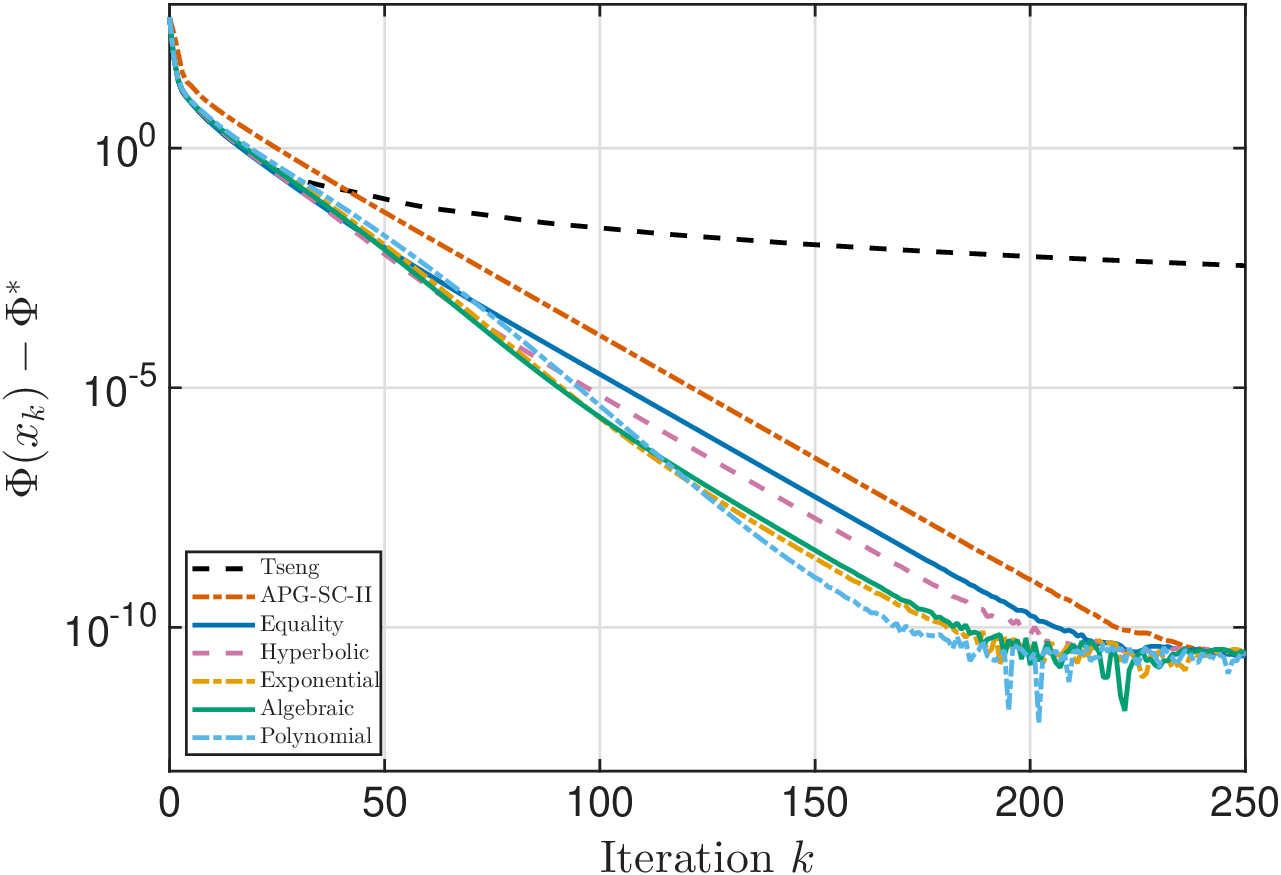}
\end{subfigure}
\hfill
\begin{subfigure}[t]{0.49\textwidth}
      \raggedright
    \includegraphics[width=\linewidth]{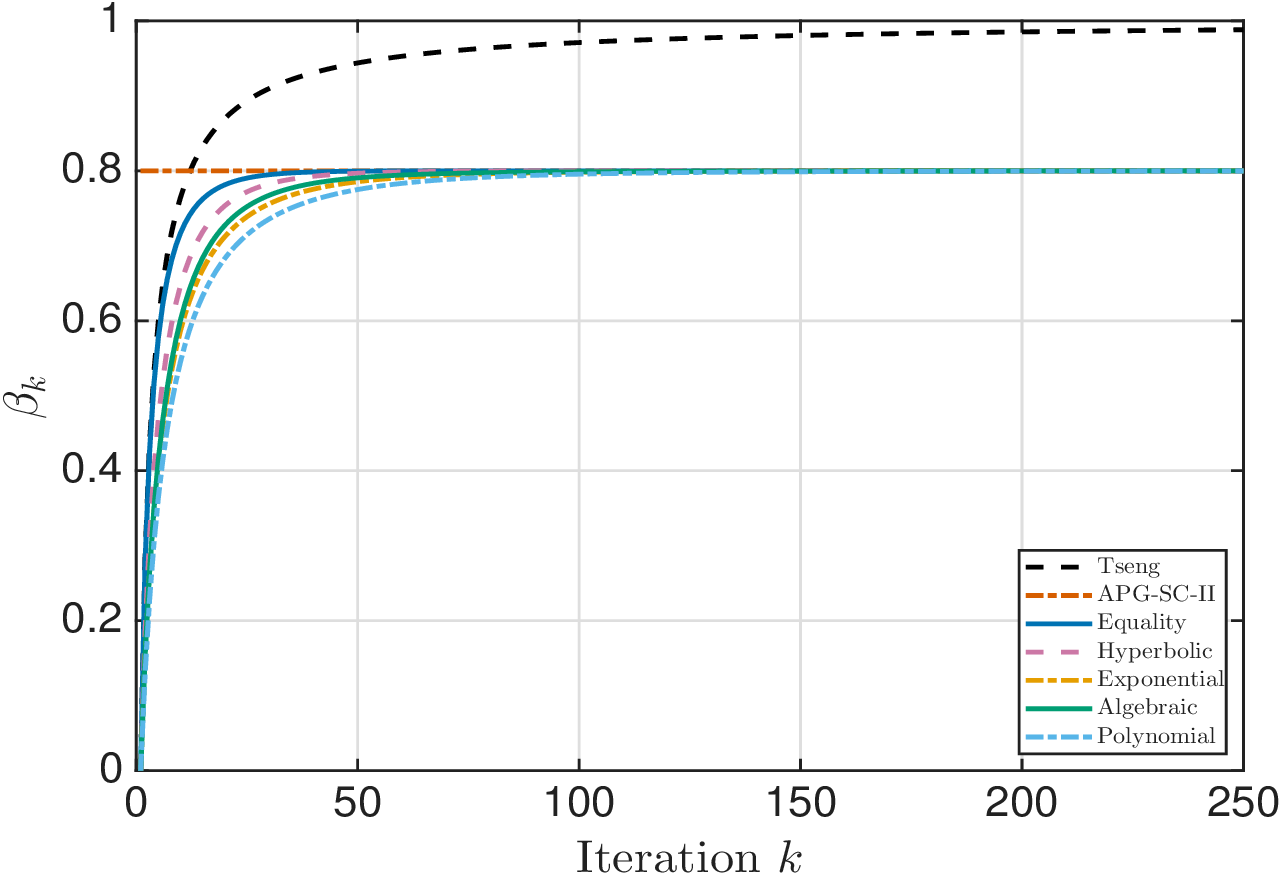}
\end{subfigure}
\caption{Objective residuals and inertial coefficients for solving problem \eqref{eq_ques3}.}
\label{fig_tv}
\end{figure}

\begin{figure}[!h]
\includegraphics[width=6.2in]{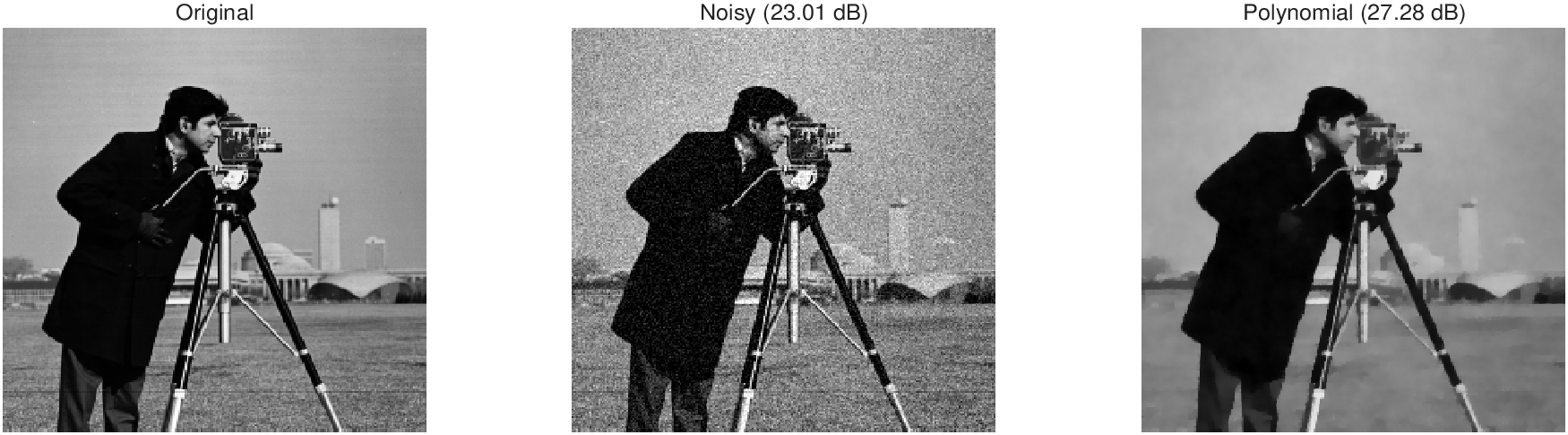}
\caption{Original, Noisy, and Denoised ``cameraman'' images}\label{figCame}
\end{figure}

All methods use the two-sequence accelerated forward-backward scheme
\eqref{eq_two_algorithm} and differ only through their coefficient sequence $\{\theta_k\}$, which also induces $\{\beta_k\}$.
The convex reference method is the Tseng-type scheme
\eqref{eq_convex_two}, with
$t_{k+1}
    =
    \frac{1+\sqrt{1+4t_k^2}}{2}$.
The strongly convex reference method is the second-type strongly convex
Nesterov accelerated proximal-gradient scheme (APG-SC-II), in the terminology of
\cite{LinML}, with constant inertial coefficient
$
    \beta_k
    =
    \frac{1-\sqrt{\mu/L}}
         {1+\sqrt{\mu/L}}.
$
We also include the equality-branch coefficient
\eqref{eq_beta_transition}. The sampled choices are the hyperbolic
transition with $\kappa=1/3$, the exponential transition with
$\kappa=1/4$, the algebraic transition with $\kappa=1/4$, and the
polynomial transition with $\kappa=1/5$ and $p=7$.

The experiments use a $256\times256$ \texttt{cameraman} image corrupted
by additive Gaussian noise with variance $\sigma^2=0.005$. We set
\[
    \lambda=0.1,
    \qquad
    \epsilon=0.01,
    \qquad
    \mu=\frac{\epsilon}{\lambda}=0.1,
\]
and run each method for $200$ iterations. To obtain a numerical reference, APG-SC-I is run for $5000$ iterations,
and the resulting approximation is denoted by $x^\star$. The convergence
curves report the objective residual
$
    \Phi(x_k)-\Phi^\star.
$

Figure~\ref{fig_tv} shows that the sampled inertial coefficients approach
the common strongly convex limit
$
    \frac{1-\sqrt{\mu/L}}
         {1+\sqrt{\mu/L}}
$
at different speeds. In the present experiment,
\[
    \mu=0.1,
    \qquad
    L=8.1,
    \qquad
    \frac{\mu}{L}=\frac{1}{81}.
\]
During the initial iterations, the transition coefficients that approach
the strongly convex limit more rapidly generally produce smaller objective
residuals than the more gradual choices. The constant-coefficient strongly
convex Nesterov scheme, however, gives larger objective residuals because it
uses the limiting coefficient from the first iteration. At higher
accuracy levels, the exponential, algebraic, and polynomial transitions
produce smaller objective residuals than the equality branch. At the final
iteration, all four sampled transitions give smaller objective residuals than
the convex Tseng-type scheme, the constant-coefficient strongly convex
Nesterov scheme, and the equality branch. Among the tested choices, the
polynomial transition gives the smallest objective gap. These results show that the objective decrease is not ordered solely by
the speed at which the inertial coefficient approaches its strongly
convex limit. In particular, a more gradual transition is not necessarily preferable
when the strong convexity constant is not very small or when the required
solution accuracy is relatively low. The most
effective transition depends on the structure of the objective function,
the strong convexity ratio $\mu/L$, and the required solution accuracy.
For the tested setting, the sampled transition coefficients produce
smaller finite-iteration objective residuals than the three reference choices.

Figure~\ref{figCame} shows the reconstruction obtained using the
two-sequence scheme with the polynomial transition. The peak
signal-to-noise ratio increases from $23.01\,\mathrm{dB}$ for the noisy
image to $27.28\,\mathrm{dB}$ for the reconstructed image, corresponding
to an improvement of $4.27\,\mathrm{dB}$. The reconstruction suppresses
the noise while retaining the principal image structures and edges.

\section{Conclusion}
\label{sec_conclusion}

This paper developed a unified continuous-discrete framework for
accelerated optimization across the convex and strongly convex regimes.
In continuous time, a common coefficient condition and Lyapunov analysis
yield both accelerated convex and exponential strongly convex convergence.
The equality case is connected with the hyperbolic constructions in
\cite{LuoChen2022,KimYang2023}, while the proposed transition families
allow the convex-limit damping and the approach to the strongly convex
damping to be adjusted separately. In discrete time, the same principle
leads to unified one-sequence and two-sequence accelerated
forward-backward schemes, which include FISTA, a Tseng-type method, and
the first- and second-type strongly convex Nesterov schemes. The discrete
equality branch coincides with \cite[Algorithm~5]{ChambolleActa}. The continuous-discrete coefficient
correspondence further provides a dynamical interpretation of this
equality-based parameter rule. Beyond the equality branch, we introduced sampled transition coefficients
that connect a broader family of convex inertial parameters to the
classical strongly convex coefficient while preserving the accelerated
 convergence rates. The numerical experiments show
that this additional flexibility can improve finite-time and
finite-iteration performance, particularly when the strong convexity
parameter is small.  These results identify the transition toward the strongly convex regime as an additional design parameter for accelerated optimization methods.

\end{document}